\newtheorem{theorem}{Theorem}[section]
\newtheorem*{acknowledgement*}{\protect\acknowledgementname}
\newaliascnt{setup}{theorem}
\newtheorem{setup}[setup]{Setup}
\newaliascnt{question}{theorem}
\newtheorem{question}[question]{Question}
\newaliascnt{lemma}{theorem}
\newtheorem{lemma}[lemma]{Lemma}
\newaliascnt{conjecture}{theorem}
\newtheorem{conjecture}[conjecture]{Conjecture}
\newaliascnt{proposition}{theorem}
\newtheorem{proposition}[proposition]{Proposition}
\newaliascnt{corollary}{theorem}
\newtheorem{corollary}[corollary]{Corollary}
\newaliascnt{problem}{theorem}
\newaliascnt{claim}{theorem}
\theoremstyle{definition}
\newaliascnt{definition}{theorem}
\newtheorem{definition}[definition]{Definition}
\newaliascnt{example}{theorem}
\theoremstyle{remark}
\newaliascnt{remark}{theorem}
\newtheorem{remark}[remark]{Remark}
\newaliascnt{remarks}{theorem}
\providecommand{\acknowledgementname}{Acknowledgement}
\numberwithin{equation}{section}
\newcommand{\et}{{\normalfont\textrm{\'et}}}
\newcommand{\proet}{{\normalfont\textrm{pro\'et}}}
\newcommand \MF[1]{\mathcal{MF}^{\text{tor}}(#1)}
\begin{document}
\title{A Lefschetz theorem for crystalline representations}
\author{Raju Krishnamoorthy}
\email{krishnamoorthy@alum.mit.edu}
\address{Institut f\"ur Mathematik Zimmer 405, Humboldt Universit\"at Berlin, Rudower Chaussee 25, 12489 Berlin Germany}
\author{Jinbang Yang}
\email{yjb@mail.ustc.edu.cn}
\address{School of Mathematical Sciences, University of Science and Technology of China, Hefei, Anhui 230026, PR China}
\author{Kang Zuo}
\email{zuok@uni-mainz.de}
\address{School of Mathematics and Statistics, Wuhan University, Luojiashan, Wuchang, Wuhan, Hubei, 430072, P.R. China.}
\address{Institut f\"ur Mathematik, Universit\"at Mainz, Mainz 55099, Germany}

\begin{abstract}
As a corollary of nonabelian Hodge theory, Simpson proved a strong Lefschetz theorem for complex polarized variations of Hodge structure. We show an arithmetic analog. Our primary technique is $p$-adic nonabelian Hodge theory. Conditional on certain foundational results in \emph{logarithmic} $p$-adic Hodge theory, we also show a logarithmic analog.
\end{abstract}

\maketitle

\section{Introduction}\label{section:introduction}
An easy corollary of Simpson's nonabelian Hodge theorem is the following.

\begin{theorem}\label{theorem:simpson}\cite[Corollary 4.3]{Sim92} Let $X$ and $Y$ be smooth projective complex varieties, and let $f\colon Y\rightarrow X$ be a morphism that induces a surjection of topological fundamental groups:
$$f_*\colon \pi_1(Y)\twoheadrightarrow \pi_1(X).$$
Let $\mathbb{L}$ be a $\mathbb{C}$-local system on $X$ such that $f^*\mathbb{L}$ underlies a complex polarized variation of Hodge structures on $Y$. Then $\mathbb{L}$ underlies a complex polarized variation of Hodge structures on $X$.
\end{theorem}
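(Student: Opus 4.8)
The plan is to deduce the statement from Simpson's nonabelian Hodge correspondence together with its basic feature that complex polarized variations of Hodge structure ($\mathbb{C}$-PVHS, for short) are precisely the fixed points of the canonical $\mathbb{C}^\times$-action. The one structural input coming from the hypothesis on $f$ is that the pullback functor $f^*$ on $\mathbb{C}$-local systems is fully faithful: identifying a $\mathbb{C}$-local system with a finite-dimensional representation of the fundamental group, a linear map between two representations of $\pi_1(X)$ is equivariant for $\pi_1(Y)$ acting through $f_*$ if and only if it is equivariant for the image of $f_*$, which by hypothesis is all of $\pi_1(X)$; hence $\mathrm{Hom}_Y(f^*\mathbb{M}_1,f^*\mathbb{M}_2)=\mathrm{Hom}_X(\mathbb{M}_1,\mathbb{M}_2)$. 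From full faithfulness one gets for free that $f^*$ is injective on isomorphism classes of local systems.

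\emph{Step 1: reduce to $\mathbb{L}$ semisimple.} Since $f^*\mathbb{L}$ underlies a $\mathbb{C}$-PVHS it carries a harmonic metric, hence is a semisimple local system. Given any sub-local system $\mathbb{L}'\subseteq\mathbb{L}$, the inclusion $f^*\mathbb{L}'\hookrightarrow f^*\mathbb{L}$ admits a splitting, which by full faithfulness is the pullback of a morphism $\mathbb{L}\to\mathbb{L}'$; as $f^*$ is faithful this morphism splits $\mathbb{L}'\hookrightarrow\mathbb{L}$. Thus $\mathbb{L}$ is semisimple, and from now on we may work inside the category of semisimple $\mathbb{C}$-local systems, where the nonabelian Hodge correspondence applies.

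\emph{Step 2: transport to Higgs bundles and use the $\mathbb{C}^\times$-action.} By Simpson's theorem, on each smooth projective variety $Z$ there is an equivalence between semisimple $\mathbb{C}$-local systems on $Z$ and polystable Higgs bundles on $Z$ with vanishing rational Chern classes, functorial for pullback along morphisms. The group $\mathbb{C}^\times$ acts on the latter by $t\cdot(E,\theta)=(E,t\theta)$, and a semisimple local system underlies a $\mathbb{C}$-PVHS exactly when the associated Higgs bundle is $\mathbb{C}^\times$-fixed (a $\mathbb{C}^\times$-fixed polystable Higgs bundle is a system of Hodge bundles, equivalently a $\mathbb{C}$-PVHS). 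The pullback $f^*$ on Higgs bundles is visibly $\mathbb{C}^\times$-equivariant since $f^*(E,t\theta)=(f^*E,\,t\,f^*\theta)$, and by functoriality it is identified with $f^*$ on local systems; in particular it is still injective on isomorphism classes.

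\emph{Step 3: conclude.} Let $(E,\theta)$ be the Higgs bundle attached to $\mathbb{L}$. By hypothesis $f^*(E,\theta)$ is $\mathbb{C}^\times$-fixed, so for every $t\in\mathbb{C}^\times$ we have $f^*\bigl(t\cdot(E,\theta)\bigr)=t\cdot f^*(E,\theta)\cong f^*(E,\theta)$; injectivity of $f^*$ on isomorphism classes forces $t\cdot(E,\theta)\cong(E,\theta)$ for all $t$. Hence $(E,\theta)$ is $\mathbb{C}^\times$-fixed, i.e.\ a system of Hodge bundles, and therefore $\mathbb{L}$ underlies a $\mathbb{C}$-PVHS. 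The genuinely substantive ingredient is Simpson's nonabelian Hodge theorem itself, invoked here as a black box; everything else is formal once $f_*$ is surjective. The only point that requires care is the bookkeeping of semisimplicity in Step 1: the hypothesis concerns only $f^*\mathbb{L}$, and one must first propagate semisimplicity (hence the applicability of the moduli-theoretic picture) back down to $\mathbb{L}$ itself.
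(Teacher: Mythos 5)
Your proposal is correct and is essentially the argument Simpson himself gives for \cite[Corollary 4.3]{Sim92}; the paper merely cites this result and does not reprove it, so there is no independent proof to compare against. Two small remarks. First, the ingredient you invoke implicitly in Step~2 --- that the Corlette--Simpson correspondence is compatible with pullback, i.e.\ that the polystable Higgs bundle attached to $f^*\mathbb{L}$ is $f^*(E,\theta)$ --- is a genuine theorem (a lemma in the same paper of Simpson, proved via uniqueness of harmonic metrics and a curvature/Chern-class computation), not a formal consequence of functoriality of the two sides separately; it is exactly what makes the $\mathbb{C}^\times$-equivariance of $f^*$ meaningful on the Higgs side. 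Second, your Step~1 reduction to semisimplicity is a clean way to justify applying the correspondence to $\mathbb{L}$; Simpson packages the same content in moduli-space language (the pullback map on character varieties is a closed immersion when $f_*$ is surjective), but the logical content --- full faithfulness of $f^*$, hence injectivity on isomorphism classes, hence fixedness of $(E,\theta)$ under $t\cdot(E,\theta)=(E,t\theta)$ --- is identical.
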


As Simpson notes, this is especially useful when $Y$ is the smooth complete intersection of smooth hyperplane sections of $X$. This article is concerned with an arithmetic analog of \autoref{theorem:simpson}. Here, the arithmetic analog\footnote{
To specify a crystalline local system is equivalent to specifying a Fontaine-Faltings module, see \autoref{section FFM}, which consists of a filtered de Rham bundle and a divided Frobenius structure which is horizontal and satisfies strong $p$-divisibility. This is analogous to how a polarization over complex numbers satisfies the Hodge Riemann bilinear relations; see also \cite[Section 1]{LSZ13b}. Another justification for the analogy: both arise from the relative cohomology of smooth projective families.}
of a complex polarized variation of Hodge structure is a crystalline local system, which was introduced first by Fontaine-Laffaille \cite{FoLa82} for $X=\mathrm{Spec}W$ and later generalized by Faltings \cite{Fal89} to the general case. To state this arithmetic analog precisely, we need the following notation.

\begin{setup}\label{setup_proj}
Let $k=\mathbb{F}_q$ be a finite field of characteristic $p>2$ with cardinality $q$ (where $q$ is a power of $p$). Let $W:=W(k)$ denote the ring of Witt vectors over $k$, and let $K:=\mathrm{Frac}(W)$ be its field of fractions. Let $X/W$ be a smooth projective scheme of relative dimension at least $2$ with geometrically connected generic fiber. Let $j\colon D \hookrightarrow X$ be a relative smooth ample divisor, flat over $W$. It follows from Grothendieck's Lefschetz theory \cite[Corollaire 2.6]{Gro68SGA2} that there is a natural continuous surjective homomorphism of \'etale fundamental groups
 \[j_{K*}\colon \pi^{\et}_1(D_K)\twoheadrightarrow \pi^{\et}_1(X_K).\]
\end{setup}

 \begin{question}\label{ques:Raju_proj}
 In the context of \autoref{setup_proj}, let $\rho_{X} \colon \pi_1^\et(X_K)\rightarrow \mathrm{GL}_N({\mathbb{Z}}_{p^f})$ be a continuous $p$-adic representation for some $f\in \mathbb{N}$. Restricting $\rho_X$ via $j_{K*}$, one gets a representation $\rho_D \colon \pi_1^\et(D_K)\rightarrow \mathrm{GL}_N({\mathbb{Z}}_{p^f})$. Suppose $\rho_D$ is crystalline. Is $\rho_X$ also crystalline?
 \end{question}

 In this article, we answer \autoref{ques:Raju_proj} under several additional assumptions.
 \begin{theorem}\label{theorem:main_proj}
 In the context of \autoref{setup_proj}, let $f\in \mathbb{N}$ and let $\rho_{X} \colon \pi_1^\et(X_K)\rightarrow \mathrm{GL}_N({\mathbb{Z}}_{p^f})$ be a continuous representation such that $\rho_D$ is crystalline (as in \autoref{ques:Raju_proj}). Suppose further that:
 \begin{enumerate}
 \item $N^2 < p-\dim X$; and
 \item $\rho_D$ is geometrically absolutely residually irreducible, i.e., the composite
 $$\pi_1^\et(D_{\overline K})\rightarrow \pi_1^\et(D_K)\rightarrow \mathrm{GL}_N(\mathbb{Z}_{p^f})\rightarrow \mathrm{GL}_N(\overline{\mathbb{F}}_{p})$$ is an irreducible representation.
 \end{enumerate}
 Then $\rho_X$ is crystalline.
 \end{theorem}

\begin{remark}\label{rmk_HTwt}
In this article, we restrict our attention to the case where $p\geq3$.
This assumption is motivated by the fact that both Faltings' theory of crystalline representations \cite{Fal89}
and the Lan-Sheng-Zuo theory of non-abelian Hodge theory require $p\geq3$ to hold.
For a crystalline representation $\rho$, it is know that its Hodge-Tate weights lie within some interval $[a,a+p-2]$ for some $a\in \mathbb{Z}$. Since integral Tate twists do not affect the property of ``being crystalline", we may without loss of generality assume that the Hodge-Tate weights of $\rho_D$ are contained in $[0,p-2]$. This normalization is necessary for the application of the Lan-Sheng-Zuo theory.
\end{remark}

Faltings developed the notion of a crystalline representation in \cite{Fal89}.
This notion has recently been reworked in the pro-\'etale language by Tan-Tong \cite{tantong2019}.
Faltings further claims in \cite[i) p.43]{Fal89} that the theory of crystalline representations extends to the logarithmic context.
However, it seems as though the details of this construction have never appeared in the literature.
We may formulate an analog of \autoref{theorem:main_proj} in the logarithmic setting,
conditional on two foundational results in the theory of logarithmic $p$-adic Hodge theory.
The first conjecture, \autoref{conj:Faltings_Dlog} is that Faltings' theory indeed extends to the logarithmic setting,
and the second conjecture (dependent on the first),
\autoref{conjecture:compatibility_filtered_dR},
is a compatibility of two natural filtered de Rham bundles (\autoref{def_Hodgefiltration}) that one may construct:
the first from the definition of a logarithmic Fontaine-Faltings module and the second from \cite[Theorem 1.1]{DLLZ}.

In particular, modulo several basic results in logarithmic $p$-adic Hodge theory (which are, as far as we understand, not yet known or at least carefully written up) our technique extends to the logarithmic setting. We now explain this

\begin{setup}\label{setup}
Let $k=\mathbb{F}_q$ be a finite field of characteristic $p>2$ with cardinality $q$ (where $q$ is a power of $p$). Let $W:=W(k)$ denote the ring of Witt vectors over $k$, and let $K:=\mathrm{Frac}(W)$ be its field of fractions. Let $X/W$ be a smooth projective scheme of relative dimension at least $2$ with geometrically connected generic fiber. Let $S\subset X$ be a strict normal crossings divisor, flat over $W$. Let $j\colon D \hookrightarrow X$ be a relative smooth ample divisor, flat over $W$ that intersects $S$ transversely, so that $S\cap D\subset D$ is a strict normal crossings divisor. Let $X^\circ_K=X_K-S_K$ and let $D^\circ_K=D_K-(D_K\cap S_K)$. By \cite[Theorem 1.1(a)]{EK16}, there is a natural continuous surjective homomorphism
 \[j_{K*}\colon \pi_1^\et(D^\circ_K)\twoheadrightarrow \pi_1^\et(X^\circ_K).\]
\end{setup}

 \begin{question}\label{ques:Raju}
 In the context of \autoref{setup},
 let $\rho_{X} \colon \pi_1^\et(X^{\circ}_K)\rightarrow \mathrm{GL}_N({\mathbb{Z}}_{p^f})$ be a continuous $p$-adic representation.
 Restricting $\rho_X$ via $j_{K*}$, one gets a representation $\rho_D \colon \pi_1^\et(D^{\circ}_K)\rightarrow \mathrm{GL}_N({\mathbb{Z}}_{p^f})$.
 Suppose \autoref{conj:Faltings_Dlog} holds for $(X,S)$ and $(D,D\cap S)$ and that moreover $\rho_D$ is logarithmic crystalline. Is $\rho_X$ also logarithmic crystalline?
 \end{question}

 \begin{theorem}\label{theorem:main}
 In the context of \autoref{ques:Raju}, suppose further that
 \begin{enumerate}
 \item $N^2 < p-\dim X$;
 \item $\rho_D$ is geometrically absolutely residually irreducible, i.e., the composite
 $$\pi_1^\et(D^\circ_{\overline K})\rightarrow \pi_1^\et(D^\circ_K)\rightarrow \mathrm{GL}_N(\mathbb{Z}_{p^f})\rightarrow \mathrm{GL}_N(\overline{\mathbb{F}}_{p})$$ is an irreducible representation;
 \item the line bundle $\mathcal{O}_X(D-S)$ is ample on $X$; and
 \item if $S\neq \emptyset$, then \autoref{conjecture:compatibility_filtered_dR} holds for $\rho_D$.
 \end{enumerate}
 Then $\rho_X$ is logarithmic crystalline.
 \end{theorem}

 \begin{remark} \autoref{theorem:main} is strictly more general than \autoref{theorem:main_proj}, for the following reasons:
 \begin{enumerate}
 \item \autoref{conj:Faltings_Dlog} asserts that Faltings' theory of crystalline representations extends naturally to the logarithmic setting. This conjecture has very recently been proved by Z. Liu and the second and third authors in \cite{LYZ25}.
 \item The compatibility in \autoref{conjecture:compatibility_filtered_dR} will follow from work on Tan-Tong, as described in the next remark (in particular, \autoref{conjecture:compatibility_filtered_dR} holds when the boundary divisor is empty).
 \end{enumerate}
 We shall therefore focus on proving \autoref{theorem:main}.
 \end{remark}

 \begin{remark}In \autoref{theorem:main}(4), when $S\neq \emptyset$, we have an extra condition, namely, that \autoref{conjecture:compatibility_filtered_dR} holds for $\rho_D$. This is a natural conjecture in relative $p$-adic Hodge theory, but let us briefly explain what it means for \autoref{conjecture:compatibility_filtered_dR} to hold for $\rho_D$. (We maintain notation from \autoref{setup}.) Suppose \autoref{conj:Faltings_Dlog} holds. Let $\rho_D$ be a logarithmic crystalline representation, associated to a logarithmic Fontaine-Faltings module $(M,\nabla,\mathrm{Fil},\varphi)_{D}$.
 Then the conjecture says the following two filtered de Rham bundles over the scheme $(U\cap D)_K$ are isomorphic:
 \begin{itemize}
 \item $(M,\nabla,\mathrm{Fil})^{\vee}\mid_{D^{\circ}_K}$ and
 \item $\mathbb{D}_{\mathrm{dR}}^{\mathrm{alg}}(\rho_D\mid_{D^{\circ}_K}),$
 \end{itemize} where $\mathbb{D}_{\mathrm{dR}}^{\mathrm{alg}}$ is the algebraic $p$-adic Riemann-Hilbert functor of \cite[Theorem 1.1]{DLLZ}. (For this to make sense in the context of \cite{DLLZ}, it suffices to note that the local system $\rho_D\mid_{D^{\circ}_K}$ is crystalline and hence de Rham by \cite{tantong2019}.)

 To prove this conjecture, one would need to develop a logarithmic variant of \cite{tantong2019}. In particular, one needs to construct the logarithmic version of a crystalline period sheaf as in \cite{tantong2019} such that it naturally embeds into the logarithmic de Rham period sheaf constructed in \cite[Definition 2.2.10]{DLLZ}.
 \end{remark}

 We do not yet know how to relax the other assumptions in Theorem \ref{theorem:main}.
 The assumptions (1) and (2) of the theorem are potentially removable.
 However, assumption (3) cannot be removed.
 The proof of Theorem \ref{theorem:main} uses both serious $p$-adic Hodge theory and $p$-adic nonabelian Hodge theory.
 Subtracting the $p$-adic Hodge theory, one is left with the following result,
 which immediately follows from our main technique (and makes no reference to (logarithmic) crystalline representations). Recall that a \emph{logarithmic Higgs bundle over $(X,S)$} is a vector bundle $E$
 over $X$ together with an $\mathcal{O}_X$-linear map $\theta\colon E\rightarrow E\otimes \Omega_X^1(\log S)$
 such that $\theta\wedge \theta = 0$.

\begin{theorem}\label{theorem:lefschetz_higgs}
Let $k$, $W$, $K$, $X$, $S$, and $D$ be as in \autoref{setup}. Let $(E,\theta)$ be a logarithmic Higgs bundle on $(X,S)$ of rank $N$. Suppose further that:
\begin{enumerate}
\item the logarithmic Higgs bundle $(E,\theta)\mid_{(D,D\cap S)}$ over $(D,D\cap S)$ initiates an $f$-periodic Higgs-de Rham flow for some integer $f\geq 1$;
\item $N^2<p-\dim(X)$;
\item the logarithmic Higgs bundle $(E,\theta)\mid_{(D,D\cap S)\mod p}$ over the special fiber of $(D,D\cap S)$ is stable;
\item the line bundle $\mathcal{O}_X(D-S)$ is ample on $X$.

\end{enumerate}
Then $(E,\theta)$ initiates an $f$-periodic Higgs-de Rham flow on $(X,S)$, extending the flow on $(E,\theta)\mid_{(D,D\cap S)}$.
\end{theorem}

We now move on to two applications of our main theorem, which avoid the logarithmic setup. As a corollary to \autoref{theorem:main_proj}, we have the following.
\begin{corollary}\label{corollary:abelian_mixed}
Setup as in \autoref{setup_proj}. Let $A_D\rightarrow D$ be an abelian scheme. Let $\rho_D\colon \pi_1^\et(D_K)\rightarrow \mathrm{GL}_{2g}(\mathbb{Z}_{p})$ be the representation induced from the $p$-adic Tate module of $A_D\rightarrow D$ over the generic fiber $D_K$. Suppose that:
\begin{enumerate}
\item $4g^2<p-\dim X$;
\item $\rho_D$ is geometrically absolutely residually irreducible, as in \autoref{theorem:main_proj}.
\end{enumerate}
Then the following are equivalent.
\begin{itemize}
\item $A_D\rightarrow D$ extends to an abelian scheme $A_X\rightarrow X$;
\item the local system $\rho_D\colon \pi_1^\et(D_K)\rightarrow \mathrm{GL}_{2g}(\mathbb{Z}_p)$ extends to a local system $\rho_X\colon \pi_1^\et(X_K)\rightarrow \mathrm{GL}_{2g}(\mathbb{Z}_p)$.
\end{itemize}
\end{corollary}

 Note that if $\dim(X)\geq 3$, then $j_{K*}\colon \pi_1^\et(D_K)\rightarrow \pi_1^\et(X_K)$ is an isomorphism, see \cite[Th\'eor\`eme 3.10]{Gro68SGA2}; hence every local system on $D_K$ extends to a local system on $X_K$. Using our proof technique and a standard spreading out argument, one recovers a very special case of a corollary of Simpson's \autoref{theorem:simpson}: $X/\mathbb{C}$ is a smooth projective variety of dimension at least $2$ and $D\subset X$ is a smooth ample divisor, then an abelian scheme $A_D\rightarrow D$ extends to $X$ if and only if the local system coming from the relative first singular cohomology extends to a local system on $X$.\footnote{That this follows from \autoref{theorem:simpson} amounts to the following fact: that the category polarizable $\mathbb{Z}$-variations of Hodge structures on $X$ is equivalent to the category of abelian schemes on $X$.}

\begin{corollary}\label{corollary:abelian_C}Let $X/\mathbb{C}$ be a smooth projective variety of dimension at least 2, let $D\subset X$ be a smooth ample divisor, and let $f_D\colon A_D\rightarrow D$ be an abelian scheme such that the associated graded Higgs bundle attached to de Rham cohomology
$$(E_D,\theta_D):=\mathrm{Gr}_{\mathrm{Fil}_{\mathrm{Hodge}}}(\mathcal{H}^1_{\rm dR}(A_D/D),\nabla)$$ is stable.\footnote{The classical nonabelian Hodge theory \cite{Sim92} tell us that this stability is equivalent to the local system $R^1f_{D*}(\mathbb{C})$ being irreducible.} Then the following are equivalent.
\begin{itemize}
\item $A_D\rightarrow D$ extends to an abelian scheme $A_X\rightarrow X$;
\item the graded Higgs bundle $(E_D,\theta_D)$ extends to a graded Higgs bundle $(E_X,\theta_X)$ on $X$.
\end{itemize}
Moreover, if $\dim(X)\geq 3$, then $A_D\rightarrow D$ always extends to an abelian scheme $A_X\rightarrow X$.
\end{corollary}
While \autoref{corollary:abelian_C} is indeed true without any stability assumption on $(E_D,\theta_D)$ by Simpson's \autoref{theorem:simpson}, the proof in \cite{Sim92} is highly analytic. On the other hand, our method is arithmetic/algebro-geometric.

\autoref{ques:Raju} was inspired by a question of Simpson. To state this question, we first need a definition.
\begin{definition}Let $X/\mathbb{C}$ be a smooth variety and let $\mathbb{L}$ be a $\mathbb{C}$-local system on $X$. We say that $\mathbb{L}$ is \emph{motivic} if there exists a dense open $U\subset X$, a smooth projective morphism $f\colon \mathcal{Y}\rightarrow U$, and an integer $i\geq 0$, such that $\mathbb{L}\mid_U$ is a subquotient \footnote{equivalently, by a theorem of Deligne \cite[Corollaire (4.2.9)]{Del71}, a summand} of $R^if_*\mathbb{C}$.
\end{definition}
\begin{question}[Simpson]\label{ques:Simpson}
Let $X/\mathbb{C}$ be a smooth, projective variety of dimension at least 2, and let $D\subset X$ be a smooth ample divisor. Let $\mathbb{L}$ be a $\mathbb{C}$-local system on $X$ such that $X\mid_D$ is motivic. Then is $\mathbb{L}$ motivic?
\end{question}

\autoref{ques:Simpson} is compatible with other conjectural characterizations of motivic local systems, e.g., Simpson's Standard conjecture \cite[p. 372]{Simpson90}. We show that \autoref{theorem:main} has the following sample application, which provides some evidence for \autoref{ques:Simpson}. Notably, since the following four terms involved each constitute a necessary condition for $\mathbb{L}$ to be motivic.

\begin{corollary}\label{corollary:almost_all_p}
Let $K$ be a number field with ring of integers $\mathcal{O}_K$, and fix a positive integer $\mathcal{N}$. Let $X$ be a smooth projective scheme over $\mathcal{O}_K[1/\mathcal{N}]$, of relative dimension at least $3$, with geometrically connected generic fiber. Let $D\subset X$ be a relative smooth ample divisor. Let $f_D\colon Y_D\rightarrow D$ be a smooth projective morphism and $i\geq 0$. Suppose the following equivalent conditions hold.
\begin{itemize}
\item The associated graded Higgs bundle
\[\mathrm{Gr}_{\mathrm{Fil}_\mathrm{Hodge}}(\mathcal{H}^i_{\rm dR}(Y_{D}/{D}),\nabla)\]
is a stable Higgs bundle over $D_K$.
\item For an(y) embedding $\iota\colon K\hookrightarrow \mathbb{C}$, the complex local system $$R^if_{D_{\mathbb{C}}*}(\mathbb{C})$$
on $D_{\mathbb{C}}:=D\times_{K,\iota}\mathbb{C}$ is irreducible.
\end{itemize}
Then, after potentially replacing $K$ by a finite extension and $\mathcal{N}$ by a larger integer, we have the following:
\begin{enumerate}
\item the Gau\ss-Manin connection together with its Hodge filtration $(\mathcal{H}^i_\mathrm{dR}(Y_D/D),\nabla_{\mathrm{GM}},\mathrm{Fil}_{\mathrm{Hodge}})$ canonically extends to a filtered de Rham bundle $(\mathcal{H},\nabla,\mathrm{Fil})$ on $X$.
\item For all primes $\mathfrak p\gg 0$ of $K$, the $\mathfrak p$-adic completion of $(\mathcal{H},\nabla,\mathrm{Fil})$ underlies a filtered Frobenius crystal.
\item For all primes $\mathfrak p\gg 0$ of $K$, the $\mathfrak p$-adic completion of the associated graded Higgs bundle
$$(E,\theta):=\mathrm{Gr}_{\mathrm{Fil}}(\mathcal{H},\nabla)$$
is $\mathfrak p$-adically 1-periodic
\item For any embedding $\iota\colon \mathcal{O}_K\hookrightarrow \mathbb{C}$, the filtered de Rham bundle $(\mathcal{H},\nabla, \mathrm{Fil})\times_{K,\iota}\mathbb{C}$ on $X\times_{K,\iota}\mathbb{C}$ underlies a $\mathbb{Z}$-polarized variation of Hodge structures.
\end{enumerate}
\end{corollary}

We find it very likely that each of conditions (2) and (3) in fact characterizes those integrable connections which are motivic. While (2) formally implies (3), we first prove (3) and use it to deduce (2). Note that condition (2) in particular guarantees that the integrable connection $(\mathcal{H}_X,\nabla)$ is \emph{globally nilpotent}, i.e., that for all $\mathfrak p\gg 0$, the $p$-curvature is nilpotent on the mod $\mathfrak p$ reduction of $X$. The condition of being globally nilpotent conjecturally characterizes irreducible motivic integrable connections. We do not know of an alternative proof of the global nilpotence of the integrable connection over $X$.

We briefly explain the structure of the proof of \autoref{theorem:main}. According \autoref{rmk_HTwt}, we first assume Hodge-Tate weights of $\rho_D$ are all contained in $[0,p-2]$.
 \begin{enumerate}
 \item[\textbf{Step 1}]In \autoref{section:LSZ}, we transform the question into a problem about extending a periodic Higgs-de Rham flow.
 The theory of Higgs-de Rham flows has its origins in the seminal work of Ogus-Vologodsky on nonabelian Hodge theory in characteristic $p$ \cite{OgVo07}.
 This theory has recently been enhanced to a $p$-adic theory by Lan-Sheng-Zuo. According to the theory of Lan-Sheng-Zuo,
 there is an equivalence between the category of certain crystalline representations (with bounds on the Hodge-Tate weights) and periodic Higgs-de Rham flows.
 (Indeed, it is here where our convention that crystalline representations have width $\leq p-2$ is relevant.)
 Denote by $\mathcal{D}$ (resp. $\mathcal{S}$ and $\mathcal{X}$) the $p$-adic formal completion of $D$ (resp. $S$ and $X$) along its special fiber.
 Let $\mathrm{HDF}_{\mathcal{D}}$ be the logarithmic Higgs-de Rham flow over $(\mathcal{D},\mathcal{S}\cap \mathcal{D})$ associated to the representation $\rho_D$:

 \begin{equation}\label{equ:HDF}
 \xymatrix@C=0.2cm{
 & (V_{\mathcal{D}},\nabla_{\mathcal{D}},\mathrm{Fil}_{\mathcal{D}})_0 \ar[dr]
 && (V_{\mathcal{D}},\nabla_{\mathcal{D}},\mathrm{Fil}_{\mathcal{D}})_1 \ar[dr]
 && \cdots \\
 (E_{\mathcal{D}},\theta_{\mathcal{D}})_0 \ar[ur]
 && (E_{\mathcal{D}},\theta_{\mathcal{D}})_1 \ar[ur]
 && (E_{\mathcal{D}},\theta_{\mathcal{D}})_2 \ar[ur]\\
 }
 \end{equation}
 Then we need to extend $\mathrm{HDF}_{\mathcal{D}}$ to some periodic Higgs-de Rham flow over $(\mathcal{X},\mathcal{S})$.

 \item[\textbf{Step 2}] In \autoref{section:crystalline_dR_Higgs}, we extend $(E_{\mathcal{D}},\theta_{\mathcal{D}})_i$ to a graded logarithmic semistable Higgs bundles $(E_{\mathcal{X}},\theta_{\mathcal{X}})_i$ over $(\mathcal{X},\mathcal{S})$.
 Using Scholze's notion of \emph{de Rham local systems} together with a rigidity theorem due to Liu-Zhu (and Diao-Lan-Liu-Zhu), we construct a graded logarithmic Higgs bundle $(E_{\mathcal{X}_K},\theta_{\mathcal{X}_K})$ over $(\mathcal{X}_K,\mathcal{S}_K)$ such that
 \[(E_{\mathcal{X}_K},\theta_{\mathcal{X}_K})\mid_{\mathcal{D}_K} = (E_{\mathcal{D}},\theta_{\mathcal{D}})\mid_{\mathcal{D}_K}.\]

 One gets graded Higgs bundles $(E_{\mathcal{X}_K},\theta_{\mathcal{X}_K})_i$ extending $(E_{\mathcal{D}},\theta_{\mathcal{D}})_i\mid_{\mathcal{D}_K}$.
 In \autoref{section:langton}, using a result of Langer (extending work of Langton), we extend $(E_{\mathcal{X}_K},\theta_{\mathcal{X}_K})=(E_{\mathcal{X}_K},\theta_{\mathcal{X}_K})_0$ to a semistable Higgs torsion-free sheaf $(E_{\mathcal{X}},\theta_{\mathcal{X}})$ on $(\mathcal{X},\mathcal{S})$.

 We show that this extension $(E_{\mathcal{X}},\theta_{\mathcal{X}})$ is unique up to an isomorphism and has trivial Chern classes in \autoref{sect:local_constancy}, which implies that $(E_{\mathcal{X}},\theta_{\mathcal{X}})$ is locally free using work of Langer.

\item[\textbf{Step 3}]We show that $(E_{\mathcal{X}},\theta_{\mathcal{X}})$ constructed in Step 2 has stable reduction modulo $p$ and is graded. To do this, we prove a Lefschetz theorem for semistable Higgs bundles with vanishing Chern classes using a vanishing theorem of Arapura in \autoref{section:arapura}. It is here that our assumptions transform from $N<p$ to $N^2<p-\dim(X)$. The argument that $(E_{\mathcal{X}},\theta_{\mathcal{X}})$ is graded is contained in \autoref{section:graded}.

 \item[\textbf{Step 4}]In \autoref{section:HDR_X1}, we extend $\mathrm{HDF}_{\mathcal{D}}\mid_{D_1}$ to a Higgs-de Rham flow $\mathrm{HDF}_{X_1}$ over $X_1$ (here, for any $n\in\mathbb{N}$, we denote by $X_n$ (resp. $D_n$) the reduction modulo $p^n$ of $X$ (resp. $D$)). By the stability of the Higgs bundle, this flow extends $\mathrm{HDF}_{\mathcal{D}}\mid_{D_1}$.

 \item[\textbf{Step 5}] In \autoref{section:HDR_X}, we deform $\mathrm{HDF}_{X_1}$ to a ($p$-adic, periodic) Higgs-de Rham flow $\mathrm{HDF}_{\mathcal{X}}$ over $\mathcal{X}$, which extends $\mathrm{HDF}_{\mathcal{D}}$. To do this, we use results of Krishnamoorthy-Yang-Zuo~\cite{KYZ} that explicitly calculate the obstruction class of deforming each piece of the Higgs-de Rham flow together with a Lefschetz theorem relating this obstruction class to the obstruction class over $D$.
 \end{enumerate}
 To prove \autoref{theorem:lefschetz_higgs}, one simply follows steps 3-5 above; we leave it to readers. In \autoref{applications}, we explain the proofs of \autoref{corollary:abelian_mixed}, \autoref{corollary:abelian_C}, and \autoref{corollary:almost_all_p}.

 We have two appendices. In Appendix \ref{section FFM}, we review the theory of Fontaine-Faltings modules and crystalline representations. In Appendix \ref{section:log_FFM}, we explain the construction of the category of logarithmic Fontaine-Faltings modules and carefully state \autoref{conj:Faltings_Dlog} and \autoref{conjecture:compatibility_filtered_dR}, the two basic results that are required to formulate and prove \autoref{theorem:main} when the boundary divisor is non-empty.

\begin{acknowledgement*}
We thank Adrian Langer for several useful emails about his work. We thank Ruochuan Liu for explanations and clarifications on his joint work with Xinwen Zhu. We thank Carlos Simpson for the suggestion of \autoref{ques:Simpson} as well as comments on this article and his interest in this work. We thank Ariyan Javanpeykar for his help with the rigid analytic Riemann existence theorem.

We heartily thank the anonymous referees for devoting a great deal of time and effort to conducting meticulous reviews of our paper,
as well as for their insightful comments and suggestions, which have significantly improved the quality of this article and enhanced its readability. We spent several years trying to work out the theory of logarithmic crystalline representations carefully. While \autoref{conj:Faltings_Dlog} has now been resolved (as addressed in \cite{LYZ25}), \autoref{conjecture:compatibility_filtered_dR} remains unresolved to date.
In our efforts to build this delicate theory, several anonymous referees were enormously helpful in identifying errors and gaps.

R.K. thanks Johan de Jong, Ambrus P\'al and Carlos Simpson for early conversations which suggested that \autoref{ques:Raju} could possibly have an affirmative answer. R.K. also thanks Philip Engel and Daniel Litt for several helpful discussions. R.K. thanks the Universit\"at Mainz, where much of this work was conducted, for pleasant working conditions. R.K gratefully acknowledges support from NSF Grant No. DMS-1344994 and ERC Grant No. 101020009 (project TameHodge). J.Y. is supported by National Natural Science Foundation of China Grant No. 12201595, USTC Research Funds of the Double First-Class Initiative No. YD0010002006, and the Fundamental Research Funds for the Central Universities and CAS Project for Young Scientists in Basic Research Grant No. YSBR-032. K.Z. is supported by National Natural Science Foundation of China Key Program Grant No. 12331002, and Science Fund for Global Challenges and Sustainability No. W2441003.
\end{acknowledgement*}
\section{Notation}
The following notation is in full force for \autoref{section:LSZ}-\autoref{section:HDR_X}.
 \begin{itemize}
 \item $p$ is an odd prime number.
 \item[$\bullet$] $k=\mathbb{F}_q$ is a finite field of characteristic $p$ and cardinality $q$ (a power of $p$)
 \begin{itemize}
 \item $W := W(k)$,
 \item $K := {\rm Frac}\,W$.
 \end{itemize}
 \item[$\bullet$]$(X,S)$: $X$ is a smooth projective scheme over $\mathrm{Spec}(W)$ with geometrically connected generic fiber and $S\subset X$ is a relative (strict) normal crossings divisor, flat over $W$.
 \begin{itemize}
 \item $(X_n,S_n)$: the reduction of $(X,S)$ modulo $p^n$ for any $n\geq 1$;
 \item $(X_K,S_K)$: the generic fiber of $(X,S)$;
 \item $X^\circ=X-S$
 \item $\mathcal{X}$: the $p$-adic formal completion of $X$ along the special fiber $X_1$;
 \item $\mathcal{S}$: the $p$-adic formal completion of $S$ along the special fiber $S_1$;
 \item $\mathcal{X}_K$: the rigid-analytic generic fiber of the formal scheme $\mathcal{X}$.
 \item $\mathcal{S}_K$: the rigid-analytic generic fiber of the formal scheme $\mathcal{S}$.

 \end{itemize}
 \item[$\bullet$] $D\subset X$: a relative smooth ample divisor, flat over $W$, that intersects $S$ transversely.
 \begin{itemize}
 \item Same notation for $D_1$, $D_n$, $D_K$, $\mathcal{D}$, $\mathcal{D}_K$, $D^\circ$, etc.
 \end{itemize}

 \end{itemize}

 \section{The theory of Lan-Sheng-Zuo}\label{section:LSZ}

 The following fundamental theorems are a combination of work of Lan-Sheng-Zuo (\cite[Theorem 1.4]{LSZ13a} for non-logarithmic case and joint with Y.~Yang in \cite[Theorem 1.1]{LSYZ14} for the logarithmic setting) together with the work of Faltings.

 \begin{theorem}\label{thm:periodic_crystalline}(Lan-Sheng-Zuo) Let $X/W$ be a smooth projective scheme and let $S\subset X$ be a relative simple normal crossings divisor, and let $X^\circ_K=X_K\backslash S_K$. Suppose \autoref{conj:Faltings_Dlog} holds for $(X,S)/W$. Then for each natural number $f\in \mathbb{N}$, there is an equivalence between the category of logarithmic crystalline representations $\pi_1^\et(X^\circ_K)\rightarrow \mathrm{GL}_N({\mathbb{Z}}_{p^f})$ with Hodge-Tate weights in the interval $[0,p-2]$ and the category of $f$-periodic Higgs-de Rham flows over $(\mathcal{X},\mathcal{S})$ where the exponents of nilpotency are less than or equal to $p-2$.
 \end{theorem}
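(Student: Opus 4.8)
\medskip
\noindent\textbf{Proof proposal.} The plan is to obtain the stated equivalence as the composite of two correspondences already in the literature: the Lan--Sheng--Zuo correspondence between $f$-periodic logarithmic Higgs--de Rham flows on $(\X,\S)$ and logarithmic Fontaine--Faltings modules with $\mathbb Z_{p^f}$-coefficients, and Faltings' correspondence between such modules and crystalline $\mathbb Z_{p^f}$-local systems on $X^o_K$ with Hodge--Tate weights in $[0,p-2]$. Thus the ``proof'' is really an assembly, and the work consists of matching the conventions of the three inputs.

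First I would set up the Higgs--de Rham flow side. Over the special fiber $(X_1,S_1)$ the Ogus--Vologodsky inverse Cartier transform $C^{-1}$ sends a graded logarithmic Higgs bundle $(E,\theta)$ whose exponent of nilpotency is $\le p-2$ to a logarithmic flat bundle $(V,\nabla)$ with nilpotent $p$-curvature; conversely, choosing a Griffiths-transverse filtration $\Fil$ and passing to the associated graded returns a Higgs bundle. A one-periodic flow is a fixed point of $\mathrm{Gr}\circ C^{-1}$, and an $f$-periodic flow is a length-$f$ periodic orbit, equivalently a one-periodic flow for the $f$-th iterate, equivalently a one-periodic flow equipped with a compatible $\mathbb Z_{p^f}$-action. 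To upgrade this to a genuine $p$-adic statement over $\X$ one must lift $C^{-1}$ to each truncation $(X_{n+1},S_{n+1})$: following Faltings and Lan--Sheng--Zuo, a lift of Frobenius on an affine cover defines the functor locally, the local ambiguity is a torsor under a sheaf whose twist is absorbed by the filtration datum, and the resulting objects are compatible in $n$, so one may pass to the limit. The nilpotency hypothesis $\le p-2$ is exactly what makes all of this defined --- it is the logarithmic Fontaine--Laffaille range --- and it is used throughout, including in the logarithmic enhancement of \cite{OgVo07} due to Lan--Sheng--Yang--Zuo in \cite{LSYZ14}.

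Second I would pass from an $f$-periodic $p$-adic flow $\{(V_i,\nabla_i,\Fil_i)\}$ to a logarithmic Fontaine--Faltings module over $(\X,\S)$ with endomorphism structure by $\mathbb Z_{p^f}$. The periodicity isomorphisms, together with the flow's alternating ``grade'' and ``inverse Cartier'' steps, assemble precisely into the $\varphi$-structure $C^{-1}(\mathrm{Gr}\,V)\xrightarrow{\ \sim\ }V$ that defines a Fontaine--Faltings module, the Hodge filtration of which has length $\le p-2$; the $\mathbb Z_{p^f}$-action on the flow endows the module with its coefficient ring. Then Faltings' functor $\mathbb D$ --- built from the crystalline period sheaves and almost \'etale descent, cf. \cite[Theorem~2.6(*)]{Fal89} --- attaches to such a module a crystalline $\mathbb Z_{p^f}$-local system on $X^o_K$ with Hodge--Tate weights in $[0,p-2]$, and it is an equivalence onto the full subcategory of crystalline local systems in that range. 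Composing the two equivalences, and checking that ``$f$-periodic'' on the flow side matches ``$\mathbb Z_{p^f}$-coefficients'' on the representation side while the logarithmic bookkeeping along $S$ (residue nilpotency, transversality) is preserved at every stage, yields the theorem.

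The main obstacle is the coherent alignment of these three constructions across the logarithmic and $p$-adic boundaries: that the inverse Cartier transform with log poles is well-defined and functorial at each truncation level, that the tower glues to a $p$-adic object over $\X$, and that $\mathbb D$ respects both the $\mathbb Z_{p^f}$-structure and the log structure along $S_K$. Essentially all of this is carried out in \cite{LSZ13a}, \cite{LSYZ14}, and \cite{Fal89}, so the proof reduces to citing these sources and reconciling their normalizations rather than establishing anything new.
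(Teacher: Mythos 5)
Your proposal is correct and matches the paper exactly: the paper itself gives no independent proof of Theorem~\ref{thm:periodic_crystalline}, stating only that it is obtained by composing the Lan--Sheng--Zuo correspondence between $f$-periodic Higgs--de Rham flows and logarithmic Fontaine--Faltings modules (\cite[Theorem~1.4]{LSZ13a}, \cite[Theorem~1.1]{LSYZ14}) with Faltings' equivalence between such modules and crystalline local systems (\cite[Theorem~2.6(*)]{Fal89}). Your more detailed unwinding of the inverse Cartier transform, the role of the nilpotency bound $\le p-2$, and the matching of $\mathbb Z_{p^f}$-coefficients with $f$-periodicity is consistent with how those references are meant to be assembled, but it does not constitute a different route.
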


 We assume that crystalline representations have width $\leq p-2$ exactly to relate the problem to a corresponding problem about Higgs-de Rham flows. By comparison, Faltings assumed in \cite[Theorem 2.3]{Fal89} that the width was merely $\leq p-1$. However, when constructing the crystalline representations, he needed to restrict the width $\leq p-2$ in \cite[Theorem 2.6]{Fal89}.

 Let us use Theorem \ref{thm:periodic_crystalline} to investigate \autoref{ques:Raju}. Since $\rho_D$ is logarithmic crystalline, there exists an integral Tate twist $\rho_D(n)$ with Hodge-Tate weights in $[0,p-2]$. Twisting by a power of the cyclotomic character, we may assume that the Hodge-Tate weights of $\rho_D$ are in $[0,p-2]$. Then there is a periodic Higgs-de Rham flow $\mathrm{HDF}_{\mathcal{D}}$ on $\mathcal{D}$ associated to this logarithmic crystalline representation under \autoref{thm:periodic_crystalline}.

 \begin{lemma}\label{lem:lift_HDF}
 Setup as in \autoref{ques:Raju} and suppose $N\leq p-2$. Then \autoref{ques:Raju} has an affirmative answer if and only if there exists a periodic Higgs-de Rham flow over $(\mathcal{X},\mathcal{S})$ extending the Higgs-de Rham flow $\mathrm{HDF}_{\mathcal{D}}$ on $(\mathcal{D},\mathcal{D}\cap \mathcal{S})$.
 \end{lemma}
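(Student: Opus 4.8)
The plan is to deduce the equivalence directly from the Lan--Sheng--Zuo correspondence of Theorem~\ref{thm:periodic_crystalline}, using two auxiliary observations. The first is purely group-theoretic: since $j_{K*}\colon \pi_1(D^o_K)\twoheadrightarrow\pi_1(X^o_K)$ is surjective, the restriction functor from continuous $p$-adic representations of $\pi_1(X^o_K)$ to those of $\pi_1(D^o_K)$ is fully faithful --- any $\pi_1(D^o_K)$-equivariant homomorphism between two representations that factor through $\pi_1(X^o_K)$ is automatically $\pi_1(X^o_K)$-equivariant, so in particular a representation of $\pi_1(X^o_K)$ is determined up to isomorphism by its restriction along $j_{K*}$. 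The second observation, which I will call $(\star)$, is that the Lan--Sheng--Zuo functors are compatible with restriction along the closed immersion $j\colon(\D,\D\cap\mathcal S)\hookrightarrow(\X,\mathcal S)$: these functors are assembled from the inverse-Cartier/Higgs--de Rham machinery and from Faltings' equivalence between logarithmic Fontaine--Faltings modules and crystalline lisse sheaves, each of which is functorial for morphisms of smooth log schemes, and $D$ meets $S$ transversely so that $(\D,\D\cap\mathcal S)$ is again of the required type. Concretely, $(\star)$ says that under Theorem~\ref{thm:periodic_crystalline} the operation ``restrict a log crystalline representation of $\pi_1(X^o_K)$ along $j_{K*}$'' corresponds to the operation ``restrict the associated periodic Higgs--de Rham flow along $j$.'' Recall also that, as explained above, $\mathrm{HDF}_{\D}$ is precisely the periodic Higgs--de Rham flow attached to the given crystalline $\rho_D$ by Theorem~\ref{thm:periodic_crystalline}.

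For the ``if'' direction, suppose $\mathrm{HDF}_{\X}$ is a periodic Higgs--de Rham flow over $(\X,\mathcal S)$ (with exponents of nilpotency $\leq p-2$) restricting to $\mathrm{HDF}_{\D}$. By Theorem~\ref{thm:periodic_crystalline} it corresponds to a log crystalline representation $\rho'_X\colon\pi_1(X^o_K)\to\mathrm{GL}_N({\mathbb Z}_{p^f})$; by $(\star)$, $\rho'_X\circ j_{K*}$ corresponds to $\mathrm{HDF}_{\X}|_{\D}\cong\mathrm{HDF}_{\D}$, hence $\rho'_X\circ j_{K*}\cong\rho_D=\rho_X\circ j_{K*}$. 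Full faithfulness of restriction then forces $\rho'_X\cong\rho_X$, so $\rho_X$ is log crystalline and Question~\ref{ques:Raju} has an affirmative answer. For the ``only if'' direction, suppose $\rho_X$ is log crystalline. Then $\rho_X$ is in particular a de Rham $p$-adic local system on the connected space $X^o_K$, so by the rigidity theorem of Liu--Zhu (and Diao--Lan--Liu--Zhu in the logarithmic setting) its multiset of generalized Hodge--Tate weights does not depend on the chosen classical point; evaluating at a classical point of $D^o_K$ and using $\rho_X\circ j_{K*}=\rho_D$ shows that these weights lie in $[0,p-2]$. Hence Theorem~\ref{thm:periodic_crystalline} attaches to $\rho_X$ a periodic Higgs--de Rham flow $\mathrm{HDF}_{\X}$ over $(\X,\mathcal S)$ with exponents of nilpotency $\leq p-2$. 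By $(\star)$, $\mathrm{HDF}_{\X}|_{\D}$ corresponds under Theorem~\ref{thm:periodic_crystalline} to $\rho_X\circ j_{K*}=\rho_D$, which also corresponds to $\mathrm{HDF}_{\D}$; since the correspondence is an equivalence of categories, $\mathrm{HDF}_{\X}|_{\D}\cong\mathrm{HDF}_{\D}$, so $\mathrm{HDF}_{\X}$ extends $\mathrm{HDF}_{\D}$.

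The only real work is in the compatibility $(\star)$: one must trace through each stage of the Lan--Sheng--Zuo construction and verify that it commutes with pullback along the transverse closed immersion $\D\hookrightarrow\X$ and with the induced map $j_{K*}$ on fundamental groups. I expect this to be routine but tedious bookkeeping about functorial constructions rather than the source of any genuine difficulty; everything else in the lemma is formal once $(\star)$ and the surjectivity of $j_{K*}$ are in hand. (Note that the role of the hypothesis $N\le p-2$ is only to place the Hodge--Tate weights of $\rho_D$, after the Tate-twist normalization discussed above, in the interval $[0,p-2]$ so that Theorem~\ref{thm:periodic_crystalline} applies in the first place.)
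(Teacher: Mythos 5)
Your proof is correct and follows essentially the same route as the paper's: both directions are applications of Theorem~\ref{thm:periodic_crystalline} together with the surjectivity of $\pi_1(D^o_K)\twoheadrightarrow\pi_1(X^o_K)$. The only difference is one of explicitness: the paper's proof simply says ``we choose the Higgs--de Rham flow associated to $\rho_X$'' and ``one obtains a crystalline representation $\rho_X'$ extending $\rho_D$,'' silently using the compatibility of the Lan--Sheng--Zuo functors with restriction along $\D\hookrightarrow\X$ and the constancy of Hodge--Tate weights; you have spelled out both of these as your observation $(\star)$ and the Liu--Zhu rigidity remark. That extra care is an improvement in exposition rather than a change of argument.
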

 \begin{proof}
 If $\rho_X$ is logarithmic crystalline, we choose the Higgs-de Rham flow $\mathrm{HDF}_{\mathcal{X}}$ associated to $\rho_X$.

 Conversely, suppose there exists a Higgs-de Rham flow $\mathrm{HDF}'_{\mathcal{X}}$ on $\mathcal{X}$ extending the Higgs-de Rham flow $\mathrm{HDF}_{\mathcal{D}}$ on $(\mathcal{D},\mathcal{D}\cap \mathcal{S})$. As $N\leq p-2$, the exponents of nilpotency are all $\leq p-2$. Then by \autoref{thm:periodic_crystalline}, one obtains a logarithmic crystalline representation $\rho_X'$ extending $\rho_D$. As the map
 $$\pi_1^\et(D^\circ_K)\rightarrow \pi_1^\et(X^\circ_K)$$
 is surjective (by e.g. \cite[Theorem 1.1(a)]{EK16}), we see that $\rho_X$ is isomorphic to $\rho'_{X}$; hence $\rho_X$ is logarithmic crystalline as desired.
 \end{proof}

We extend a useful preperiodicity result, see \cite[Theorem 1.5]{LSZ13a} and \cite[Proposition 1]{Lan15}, to the logarithmic setting. The proof is the same as the non-logarithmic version; we include it here for the convenience of the readers.
\begin{lemma} \label{footnote:preperiodic}
let $(X_1,S_1)$ be a smooth projective variety and simple normal crossings divisor defined over $\overline{\mathbb{F}}_p$. Suppose the pair $(X_1,S_1)$ has a smooth lifting to $W_2$. Let $(E,\theta)$ be a logarithmic Higgs bundle with nilpotent Higgs field over $(X_1,S_1)$ and with $\mathrm{rank}(E)\leq p$. Then $(E,\theta)$ is semistable with vanishing Chern classes if and only if it is preperiodic.
\end{lemma}

\begin{proof}
The inverse Cartier $C^{-1}$ sends slope semistable Higgs bundles to slope semistable integrable connections. Any semistable integrable log-connection has a distinguished gr-semistable Griffiths transverse filtration, the \emph{Simpson filtration}, whose field of definition is the same as the field of definition of the integrable connection \cite[Theorem 5.5]{Lan14}.

Let $\mathcal{M}$ be the moduli space of semistable logarithmic Higgs bundles of rank $r$ on $(X_1,S_1)$ with trivial $\mathbb{Q}_\ell$-Chern classes. This is a finite dimensional moduli space by a boundedness result of Langer \cite[Theorem 1.2]{Lan19}. There exists a finite field $\mathbb{F}_q$ such that $X_1$ and this moduli space are both defined. For any $f\geq1$ and any
\[(E,\theta) \in \mathcal{M}(\mathbb{F}_{q^f}),\]
by running the Higgs-de Rham flow initial with $(E,\theta)$, we obtain
\[(\mathrm{Gr}\circ C^{-1})^m((E,\theta)) \in \mathcal{M}(\mathbb{F}_{q^f}) \qquad \text{ for any } m\geq0,\]
where $\mathrm{Gr}$ is the associated graded of the Simpson filtration. Indeed, here we are implicitly using that the Simpson filtration is defined over the field of definition of the integrable connection and also that the associated graded is semi-stable. Since the set of all $\mathbb{F}_{q^f}$-points of $\mathcal{M}$ is finite, the Higgs bundle $(E,\theta)$ is preperiodic.

Coversely, by assumption there exists a filtration $0=N^t\subset N^{t-1}\subset \cdots \subset N^0=(E,\theta)$ by sub-Higgs sheaves such that $N_i=N^i/N^{i-1}$ has trivial Higgs field. Then $C^{-1}(N_i)=F_{X_1/\overline{\mathbb{F}}_p} N_i$ and therefore
\[c_i\left(\mathrm{Gr}\circ C^{-1}(E,\theta)\right) = c_i(C^{-1}(E,\theta)) = p c_i(E,\theta)\]
where $c_i$ denotes the $i$-th Chern class. Thus preperiodicity of $(E,\theta)$ implies vanishing of the Chern classes. Next, we show the semistability. Suppose $(E,\theta)$ is not semistable; then there exists a sub-Higgs sheaf $(F,\theta)\subset (E,\theta)$ with slope $\mu(F)>0$. By running the Higgs-de Rham flow, one obtains a sequence of sub-Higgs bundles with unbounded slopes
\[(\mathrm{Gr}\circ C^{-1})^m((F,\theta)) \subset (\mathrm{Gr}\circ C^{-1})^m((E,\theta)).\]
This is impossible, because there are finitely many isomorphism classes of Higgs bundles in
\[\{\mathrm{Gr}\circ C^{-1})^m((E,\theta)) \mid m\geq 0\}\]
by the preperiodicity of $(E,\theta)$.
\end{proof}

\section{de Rham/crystalline Local systems and graded Higgs bundles}\label{section:crystalline_dR_Higgs}

 \begin{setup}\label{setup:scheme_rigid}Let $Y/W$ be a smooth (not necessary projective) scheme with geometrically connected generic fiber. Denote by
 \begin{enumerate}
 \item $\mathcal{Y}$ the $p$-adic formal completion of $Y$ along the special fiber $Y_1$; and by
 \item $\mathcal{Y}_K$ the rigid-analytic generic fiber of the formal scheme $\mathcal{Y}$, which is an open subset of $Y_K^{\rm an}$, the analytic space associated to the generic fiber $Y_K$ as in \cite[p. 529]{Con99}.
 \end{enumerate}
 \end{setup}

\emph{lisse $\mathbb{Z}_p$-sheaf} over the small \'etale site $Y_{K,\et}$
(resp. $\mathcal{Y}_{K,\et}$)
is an inverse system $\mathbb{L} = \{\mathbb{L}_n\}_{n\geq1}$
where $\mathbb{L}_n$ is an \'etale $\mathbb{Z}/p^n\mathbb{Z}$-local system over $Y_K$ (resp. $\mathcal{Y}_K$)
and the transition maps $\mathbb{L}_{n+1}\to\mathbb{L}_n$ induce isomorphisms $\mathbb{L}_n\cong \mathbb{L}_{n+1}\pmod{p^{n}}$.
A \emph{lisse $\mathbb{Z}_p$-sheaf} over $Y_{K,\et}$ (resp. $\mathcal{Y}_{K,\et}$)
is also called an \emph{(\'etale) $\mathbb{Z}_p$-local system} over $Y_K$ (resp. $\mathcal{Y}_K$).
A lisse $\mathbb{Z}_p$-sheaf over $\mathcal{Y}_{K,\et}$ can be viewed as
a lisse $\widehat{\mathbb{Z}}_p$-sheaf over $\mathcal{Y}_{K,\proet}$,
where $\widehat{\mathbb{Z}}_p = \varprojlim \mathbb{Z}/p^n\mathbb{Z}$ as sheaves on $\mathcal{Y}_{K,\proet}$.

 A $\mathbb{Z}_p$-local system $\mathbb{L}$ over the rigid-analytic space $\mathcal{Y}_K$ is called \emph{crystalline},
 if it has an associated Fontaine-Faltings module $M=(V,\nabla,\mathrm{Fil},\varphi)_{\mathcal{Y}}$, see the definition in the \autoref{section FFM}, over $\mathcal{Y}$ such that $\mathbb{L} = \mathbb{D}(M)$, where $\mathbb{D}$ is Faltings' $\mathbb{D}$-functor.
If $Y/W$ is proper, then every finite \'etale cover of $\mathcal{Y}_K$ extends to a finite \'etale cover on $Y_K$. Therefore, every local system over $\mathcal{Y}_K$ can be extended to a unique local system on $Y_K$.\footnote{This argument occurs on p. 42 of \cite[Theorem 2.6*]{Fal89}. We note that the details of the argument involving formal GAGA is not given in \cite{Fal89} and was only later provided in \cite[Appendix]{Tsu96}.} Under the assumption $Y/W$ is proper, we will call a $\mathbb{Z}_p$-local system over the $K$-scheme $Y_K$ \emph{crystalline}, if its restriction on $\mathcal{Y}_K$ is crystalline. For the detailed definitions, see Appendix \ref{section FFM}.

In \cite[Definition 7.5 and Definition 8.3]{Sch13}, Scholze defined de Rham $\mathbb{Z}_p$-local system over proper smooth adic space. This definition extends to non-proper rigid analytic spaces without modification. A $\mathbb{Z}_p$-local system $\mathbb{L}$ on $\mathcal{Y}_K$ is said to be \emph{de Rham} if there exists a filtered de Rham bundle $(\mathcal{E},\nabla,\mathrm{Fil})_{\mathcal{Y}_K}$, see \autoref{def_Hodgefiltration}, over $\mathcal{Y}_K$ such that
\[\mathcal{E}_{\mathcal{Y}_K} \otimes_{\mathcal{O}_{\mathcal{Y}_K}} \mathcal{O}\mathbb{B}_{\mathrm{dR}} \simeq \mathbb{L} \otimes_{\mathbb{Z}_p} \mathcal{O}\mathbb{B}_{\mathrm{dR}}.\footnote{Here, this isomorphism is of sheaves on the pro-\'etale site of $\mathcal{Y}_K$, the rigid-analytic generic fiber.}\]
According \cite[Theorem 3.9(iv)]{LZ17}, a local system is de Rham if its stalk at a classical point regarded as a $p$-adic representation of the residue field is de Rham.
A $\mathbb{Z}_p$-local system over $Y_K$ is called de Rham, if the restriction of the local system on $\mathcal{Y}_K$ is de Rham.

Note that Faltings also defined an ``associated relation'' between filtered convergent $F$-isocrystals and smooth $\mathbb{Q}_p$-adic \'etale sheaves in the paragraph before \cite[Lemma 5.5]{Fal89}. If a filtered convergent $F$-isocrystal $\mathcal{E}$ comes from a Fontaine-Faltings module $M$, then there is a representation associated to $\mathcal{E}$, which is just the dual of $\mathbb{D}(M)\otimes \mathbb{Q}_p$, see the last remark in \cite[section V]{Fal89}. Therefore, in the case of $\mathbb{Q}_p$-coefficients, the ``associated relation'' is a generalization of crystalline representations defined in \cite[Theorem 2.6*]{Fal89}. This was reformulated by Tan-Tong in \cite[Definition 3.10]{tantong2019} using the pro-\'etale site.
In order to distinguish these two notions, we refer to Tan-Tong's reformulation as \emph{TT-crystalline local systems (or representations)} in this note. More explicitly, Tan-Tong first constructed the crystalline period sheaf $\mathcal{O}\mathbb{B}_{\mathrm{cris}}$. A pro-\'etale local system $\mathbb{L}$ on $\mathcal{Y}_K$ is then called a \emph{TT-crystalline local system} if there exists a filtered $F$-isocrystal on $Y_1$ with realization $(\mathcal{E},\nabla,\mathrm{Fil})_{\mathcal{Y}_K}$ over $\mathcal{Y}_K$ such that there is an isomorphism
\[\mathcal{E}_{\mathcal{Y}_K} \otimes_{\mathcal{O}_{\mathcal{Y}_K}} \mathcal{O}\mathbb{B}_{\mathrm{cris}} \simeq \mathbb{L} \otimes_{\mathbb{Z}_p} \mathcal{O}\mathbb{B}_{\mathrm{cris}}\]
 preserving the connection, filtration, and Frobenius.

 Using the natural inclusion $\mathcal{O}\mathbb{B}_{\mathrm{cris}}\hookrightarrow \mathcal{O}\mathbb{B}_{\mathrm{dR}}$ \cite[Corollary 2.25(1)]{tantong2019}, any TT-crystalline local system over $\mathcal{Y}_K$ is also de Rham.
 For a given crystalline local system $\mathbb{L}$ on $\mathcal{Y}_K$, according to the paragraph before \cite[Lemma 5.5]{Fal89} and \cite[Proposition 3.21]{tantong2019}, one gets a TT-crystalline local system.

\begin{remark}\label{rem:de Rham crystalline}
Assume that $\mathbb{L}$ is a crystalline local system over $\mathcal{Y}_K$. Then there is an associated Fontaine-Faltings module $(M,\nabla,\mathrm{Fil},\varphi)_{\mathcal{Y}}$ over $\mathcal{Y}$. Since crystalline local systems are always TT-crystalline, they are always de Rham. Thus there is also a filtered de Rham bundle $(\mathcal{E},\nabla,\mathrm{Fil})_{\mathcal{Y}_K}$ over $\mathcal{Y}_K$ associated to $\mathbb{L}$. One then has
 \[(M,\nabla,\mathrm{Fil})_{\mathcal{Y}}\mid_{\mathcal{Y}_K} = (\mathcal{E},\nabla,\mathrm{Fil})_{\mathcal{Y}_K}^\vee.\]
The appearance of a dual is simply because Faltings' original $\mathbb{D}$-functor is contravariant.
\end{remark}

Now, we consider the logarithmic case. We emphasize here: while Faltings' claimed that his results on crystalline representations extend to the logarithmic setting, the details have never been carefully written down. Therefore, everything we write here is dependent on \autoref{conj:Faltings_Dlog}, which more or less says that Faltings' $\mathbb{D}$ functor extends to the logarithmic setting. We denote this extended functor as $\mathbb{D}^{\log}$.
\begin{setup}\label{setup:scheme_rigid_log}
Let $Y$, $Y_K$, $\mathcal{Y}$ and $\mathcal{Y}_K$ be given as in \autoref{setup:scheme_rigid}. let $Z/S$ be a relative simple normal crossing divisor in $Y$ and denote $U$ the complement of $Z$ in $Y$. Set $Z_K$, $\mathcal{Z}$, $\mathcal{Z}_K$, $U_K$, $\mathcal{U}$ and $\mathcal{U}_K$ to be spaces constructed analogously to those in \autoref{setup:scheme_rigid}.\end{setup}
Note that $\mathcal{U}_K$ is a $p$-adic rigid analytic open subset of $\mathcal{Y}^\circ_K:=\mathcal{Y}_K-\mathcal{Z}_K$, and in general it is strictly smaller. For instance, consider the following example: $Y=\mathrm{Spec}(W[T])$ and $Z$ is defined by equation $T=0$. Then $\mathcal{U}_K$ is the annulus in the analytification of the affine line $\mathbb{A}^1_K$ given by $|T|_p=1$, but $\mathcal{Y}^\circ_K$ is the annulus given by $0<|T|_p\leq1$.

Suppose \autoref{conj:Faltings_Dlog} holds. A $\mathbb{Z}_p$-local system over $\mathcal{Y}^\circ_K$ is called \emph{logarithmic crystalline} over $(\mathcal{Y}_K,\mathcal{Z}_K)$, if it comes from a logarithmic Fontaine-Faltings module over $(\mathcal{Y},\mathcal{Z})$ under the functor $\mathbb{D}^{\log}$.
A $\mathbb{Z}_p$-local system over $Y^\circ_K:=Y_K-Z_K$ is called \emph{logarithmic crystalline} over $(Y_K,Z_K)$, if its restriction on $\mathcal{Y}^\circ_K$ is logarithmic crystalline over $(\mathcal{Y}_K,\mathcal{Z}_K)$. See Appendix \ref{section:log_FFM} for precise details on the category of Fontaine-Faltings modules.

We rewrite a basic consequence of \autoref{conj:Faltings_Dlog} and \autoref{rem:algebraic} in the language of local systems.
\begin{proposition}\label{log loc.sys.} Suppose \autoref{conj:Faltings_Dlog} holds. Let $M$ be a logarithmic Fontaine-Faltings module over $(\mathcal{Y},\mathcal{Z})$. Denote by $\mathbb{L}_{\mathcal{U}_K}$ the local system over $\mathcal{U}_K$ associated to the representation $\mathbb{D}(M\mid_{\mathcal{U}})$ of $\pi_1^\et(\mathcal{U}_K)$. Then
\begin{enumerate}
 \item the local system $\mathbb{L}_{\mathcal{U}_K}$ extends to a local system $\mathbb{L}_{\mathcal{Y}^\circ_K}$ over $\mathcal{Y}^\circ_K$.
 \item Suppose $Y$ is proper over $W$. Then the local system $\mathbb{L}_{\mathcal{Y}^\circ_K}$ is algebraic. That is, there exists a local system $\mathbb{L}_{Y^\circ_K}$ over $Y_K^\circ=Y_K-Z_K$ such that $\mathbb{L}_{\mathcal{Y}^\circ_K} = \mathbb{L}_{Y^\circ_K}\mid_{\mathcal{Y}_K^\circ}.$
\end{enumerate}
Abusing notation, we denote by $\mathbb{D}^{\log}(M)$ both local systems $\mathbb{L}_{\mathcal{Y}^\circ_K}$ and $\mathbb{L}_{Y^\circ_K}$.
\end{proposition}

As explained in the introduction, when $S\neq \emptyset$, for our argument to work we require a fundamental compatibility in logarithmic $p$-adic Hodge theory: \autoref{conjecture:compatibility_filtered_dR}.
\begin{proposition} Maintain notation as in \autoref{setup} and \autoref{ques:Raju}. Suppose further that if $S\neq \emptyset$, then \autoref{conjecture:compatibility_filtered_dR} holds for $\rho_D$. There is an associated logarithmic Fontaine-Faltings module $(M,\nabla,\mathrm{Fil},\varphi)_{\mathcal{D}}$ over $(\mathcal{D},\mathcal{D}\cap\mathcal{S})$ to $\rho_D$. By taking the associated graded of the underlying filtered logarithmic de Rham bundle $(M,\nabla,\mathrm{Fil})_\mathcal{D}$, we obtain a logarithmic Higgs bundle $(E,\theta)_{\mathcal{D}}$ over $(\mathcal{D},\mathcal{D}\cap\mathcal{S})$. Then we have the following.
\begin{itemize}
 \item[(1).] There exists a filtered logarithmic de Rham bundle $(M,\nabla,\mathrm{Fil})_{X_K}$ over $(X_K,S_K)$ such that
 \[(M,\nabla,\mathrm{Fil})_{\mathcal{D}}\mid_{\mathcal{D}_K} = (M,\nabla,\mathrm{Fil})_{X_K} \mid_{\mathcal{D}_K}.\]
 Furthermore, the connection has nilpotent residues around $S_K$.
 \item[(2).] There exists a logarithmic graded semistable Higgs bundle $(E,\theta)_{X_K}$ over $(X_K,S_K)$ extending $(E,\theta)_\mathcal{D}\mid_{\mathcal{D}_K}$.
 \item[(3).] All Chern classes of $M_{X_K}$ and $E_{X_K}$ vanish.
\end{itemize}
\end{proposition}

\begin{proof}
To prove (1), we must construct a filtered logarithmic de Rham bundle.

If $S=\emptyset$, then the representation $\rho_X$ is de Rham by \cite[Theorem 1.3]{LZ17}. Then the existence of $(M,\nabla,\mathrm{Fil})_{X_K}$ follows from by \autoref{rem:de Rham crystalline}. In the logarithmic setting, we will use the functor $D^{\rm alg}_{\rm dR}$ in \cite[Theorem 1.1]{DLLZ} and then show its dual satisfies the requirements in (1).

Denote by $\mathbb{L}_{D_K^\circ}$ and $\mathbb{L}_{X_K^\circ}$ the corresponding local systems associated to $\rho_D$ and $\rho_X$. We first prove that the condition in \cite[Theorem 1.1]{DLLZ} is satisfied for the local system $\mathbb{L}_{X_K^\circ}\otimes \mathbb{Q}_p$ over $X^\circ_K$. i.e. $\mathbb{L}_{X_K^\circ}\mid_{(X^\circ_K)^{an}}$ is de Rham. Denote by $\mathcal{U}$ the $p$-adic completion of $D-D\cap S$ along its special fiber, and denote by $\mathcal{U}_K$ the analytic generic fiber of $\mathcal{U}$ which is an open subset of $(D^\circ_K)^{an}$.
By forgetting the logarithmic structure, the local system $\mathbb{L}_{D_K^\circ}\mid_{\mathcal{U}_K}$ is crystalline (the local version in the sense of \cite[II(g)]{Fal89}) with associated Fontaine-Faltings module $(M,\nabla,\mathrm{Fil},\varphi)_\mathcal{D}\mid_{\mathcal{U}}$. Then by \cite[Corollary 2.25(1)]{tantong2019}, $\mathbb{L}_{D_K^\circ}\mid_{\mathcal{U}_K}$ is also de Rham with attached filtered de Rham bundle
\begin{equation} \label{eq4.1.1}
 D_{\rm dR}(\mathbb{L}_{D_K^\circ}\mid_{\mathcal{U}_K}) = (M,\nabla,\mathrm{Fil})^\vee_\mathcal{D}\mid_{\mathcal{U}_K},
\end{equation}
where $D_{\rm dR}$ is Liu-Zhu's functor in \cite[Theorem 3.8]{LZ17}. Since $\mathbb{L}_{X_K^\circ}$ extends $\mathbb{L}_{D_K^\circ}$, by rigidity of de Rham local system \cite[Theorem 1.5(iii)]{LZ17}, $\mathbb{L}_{X_K^\circ}\mid_{(X^\circ_K)^{an}}$ is also de Rham.

Now, the functor $D^{\rm alg}_{\rm dR}$ in \cite[Theorem 1.1]{DLLZ} give us a filtered logarithmic de Rham bundle $D^{\rm alg}_{\rm dR}(\mathbb{L}_{X_K^\circ}\otimes \mathbb{Q}_p)$ over $(X_K,S_K)$. We denote $(M,\nabla,\mathrm{Fil})_{X_K}$ to be the dual of $D^{\rm alg}_{\rm dR}(\mathbb{L}_{X_K^\circ}\otimes \mathbb{Q}_p)$. That is
\begin{equation} \label{eq411}
 (M,\nabla,\mathrm{Fil})_{X_K} =D^{\rm alg}_{\rm dR}(\mathbb{L}_{X_K^\circ}\otimes \mathbb{Q}_p)^\vee.
\end{equation}
Finally, we show that $(M,\nabla,\mathrm{Fil})_{X_K}$ satisfies our requirement in (1). From the construction of $D^{\rm alg}_{\rm dR}$, it is the algebraization of the functor
$D_{\rm dR,\log}$ in \cite[Theorem 1.7]{DLLZ}. i.e.
\begin{equation}\label{eq412}
 D_{\rm dR,\log}(\mathbb{L}_{\mathcal{X}_K^\circ}\otimes \mathbb{Q}_p) =D^{\rm alg}_{\rm dR}(\mathbb{L}_{X_K^\circ}\otimes \mathbb{Q}_p)\mid_{\mathcal{X}_K}.
\end{equation}
By the functorial property of $D_{\rm dR,\log}$, one has
\begin{equation} \label{eq4.1.3}
D_{\rm dR,\log}(\mathbb{L}_{\mathcal{D}_K^\circ}\otimes \mathbb{Q}_p) = D_{\rm dR,\log}(\mathbb{L}_{\mathcal{X}_K^\circ}\otimes \mathbb{Q}_p) \mid_{\mathcal{D}_K}
\end{equation}
Under the hypothesis that \autoref{conjecture:compatibility_filtered_dR} holds for $\rho_D$, we have
\begin{equation}\label{eq414}
 (M,\nabla,\mathrm{Fil})_{\mathcal{D}}\mid_{\mathcal{D}_K}= D_{\rm dR}^{\rm alg}(\mathbb{L}_{\mathcal{D}_K^\circ} \otimes \mathbb{Q}_p)^\vee.
\end{equation}
Summing up the equations above, one has
\begin{equation*}
\begin{split}
 (M,\nabla,\mathrm{Fil})_{X_K}\mid_{\mathcal{D}_K}
 &
 \overset{\eqref{eq411}}{=}
 D^{\rm alg}_{\rm dR}(\mathbb{L}_{X_K^\circ}\otimes \mathbb{Q}_p)^\vee \mid_{\mathcal{D}_K}
 \overset{\eqref{eq412}}{=}
 D_{\rm dR,\log}(\mathbb{L}_{\mathcal{X}_K^\circ}\otimes \mathbb{Q}_p)^\vee \mid_{\mathcal{D}_K} \\
& \overset{\eqref{eq4.1.3}}{=}
 D_{\rm dR,\log}(\mathbb{L}_{\mathcal{D}_K^\circ}\otimes \mathbb{Q}_p)^\vee
 \underset{\autoref{conjecture:compatibility_filtered_dR}}{{\overset{\eqref{eq414}}{=}}}
 (M,\nabla,\mathrm{Fil})_{\mathcal{D}}\mid_{\mathcal{D}_K}.
\end{split}
\end{equation*}
As $\rho_D$ is logarithmic crystalline, we claim that the connection $(M,\nabla)_{\mathcal{D}}\mid_{\mathcal{D}_K}$ has nilpotent residues. We only need to consider this locally. We first choose a local Frobenius lifting $\Phi$ on $\mathcal{D}$ such that $\Phi^*(t_S) = t_S^p$, where $t_S$ is the local parameter of $D \cap S$. Recall that the Frobenius structure in the Fontaine-Faltings module induces an isomorphism $\Phi^*(M,\nabla)_{\mathcal{D}}\mid_{\mathcal{D}_K} \cong (M,\nabla)_{\mathcal{D}}\mid_{\mathcal{D}_K}$ of integrable connections over $\mathcal{D}_K$. Thus the set of eigenvalues of the residue around $S_K\cap D_K$ is closed under multiplying by $p$ map. As the set is finite, all eigenvalues must be zero, hence the residue is nilpotent. See also \cite[Def. 7.2]{Ked22} for this argument.

We further claim that the connection $\nabla_{X_K}$ also has nilpotent residues around $S_K$.\footnote{Another argument for the nilpotence of residues; the local monodromy of $\rho_D$ along $D_K\cap S_K$ is unipotent. This implies that the local monodromy of $\rho_X$ along $S_K$ is also unipotent because $D_K$ intersects each component of $S_K$ non-trivially and transversally. Then simply apply \cite[Theorem 3.2.12]{DLLZ}.}
We only need to show the eigenvalues of the residues are all zero. For the residue along each component of $S_K$, the coefficients of its characteristic polynomial are global sections of the structure sheaves over each component. On the other hand, by properness, these global sections are constant, hence the eigenvalues of the residue are also constant. Note that $D_K$ meets every component of $S_K$ as $D_K \subset X_K$ is ample. Thus, these eigenvalues are all zero.

To prove (2), by taking the associated graded of the filtered integrable connection in (1), one derives (2) from (1), except for the semistability of the graded Higgs bundle. Consider the associated graded Higgs bundle $(E,\theta)_{X_K}= \mathrm{Gr}((M,\nabla,\mathrm{Fil})_{X_K})$ over $X_K$, which extends the Higgs bundle $(E,\theta)_\mathcal{D}\mid_{D_K}$. Since $(E,\theta)_{X_K}\mid_{D_K}$ is semistable, it follows that $(E,\theta)_{X_K}$ is also semistable (Suppose $(E,\theta)_{X_K}$ is not semistable; then its maximal destabilizing sheaf $(F,\theta)$ is a subsheaf of lower rank with greater slope. Then its restriction $(F,\theta)\mid_{D_K}$ also has greater slope than $(E,\theta)_{X_K}\mid_{D_K}$, which contradicts the semistability of $(E,\theta)_{X_K}\mid_{D_K}$.).

To prove (3), we recall the fact that $M_{X_K}$ admits an integrable connection with logarithmic poles and nilpotent residues implies that its de Rham Chern classes all vanish \cite[Appendix B]{EV86}. By comparison between de Rham and $l$-adic cohomology, this implies that the $\mathbb{Q}_\ell$-Chern classes also vanish. So do the Chern classes of its grading $E_{X_K}$.
\end{proof}

 In summary, we have constructed a logarithmic Higgs bundle $(E,\theta)_{X_K}$ extending the Higgs bundle $(E_\mathcal{D},\theta_\mathcal{D})\mid_{D_K}$ that is semistable, and has (rationally) trivial Chern classes. In the next sections, we will extend $(E,\theta)_{X_K}$ to a Higgs \emph{bundle} (as opposed to merely a Higgs sheaf) on $X$ whose special fiber is also semistable.

\section{A theorem in the style of Langton}\label{section:langton}
In this section, we apply results due to Langer~\cite{Lan14,Lan19} in the vein of Langton to graded semistable logarithmic Higgs bundles. We fix an auxiliary prime $\ell\neq p$. (All subsequent results are independent of the choice of $\ell$.)

\begin{theorem}\label{thm:langton} Let $(E_{X_K},\theta_{X_K})$ be a graded semistable logarithmic Higgs bundle over $X_K$ such that the underlying vector bundle has rank $r\leq p$ and trivial $\mathbb{Q}_\ell$-Chern classes. Then there exists a semistable logarithmic Higgs bundle $(E_X,\theta_X)$ over $X$, which satisfies
 \begin{itemize}
 \item $(E_X,\theta_X)\mid_{X_K} \simeq (E_{X_K},\theta_{X_K})$;
 \item $(E_X,\theta_X)\mid_{X_1}$ is semistable over $X_1$.
 \end{itemize}
\end{theorem}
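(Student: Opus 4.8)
The plan is to apply Langer's extension theorems for (logarithmic) Higgs sheaves to the bundle $(E_{X_K},\theta_{X_K})$, which we already know is graded, semistable, and has trivial $\mathbb{Q}_\ell$ Chern classes. The key point that makes this a genuine ``Langton-style'' statement rather than a mere formality is that we need control not only over the generic fiber extension but over the \emph{special fiber}: we must produce an extension whose reduction mod $p$ remains semistable, and this is exactly where the hypothesis $r \le p$ enters (Langer's semistable restriction/boundedness results in positive and mixed characteristic require bounds of this kind, e.g. $r \le p$ for the characteristic-$p$ part of the argument to behave well).

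First I would use the properness of $X/W$ together with a spreading-out / flat-extension argument to obtain \emph{some} coherent extension of $(E_{X_K},\theta_{X_K})$ to a torsion-free logarithmic Higgs sheaf $(E',\theta')$ over $X$, flat over $W$ — for instance by taking the scheme-theoretic image or a suitable saturation of a naive extension of generators. At this stage $(E',\theta')\mid_{X_1}$ need not be semistable. Second, I would run Langer's version of Langton's algorithm: if the special fiber is not semistable, one modifies $(E',\theta')$ along a maximal destabilizing Higgs subsheaf of $(E',\theta')\mid_{X_1}$ (a Hecke-type elementary modification supported on $X_1$), producing a new extension with the same generic fiber. The heart of the matter is that this process terminates; Langer's results (in \cite{Lan14,Lan19}), adapting Langton's boundedness of the set of torsion-free extensions, guarantee termination precisely under a numerical hypothesis on the rank, which is where $r \le p$ is consumed. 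Because the modifications are Higgs-equivariant and only alter the special fiber, the generic fiber condition $(E_X,\theta_X)\mid_{X_K}\simeq (E_{X_K},\theta_{X_K})$ is preserved throughout.

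Third, I would record semistability of the limit object $(E_X,\theta_X)$ on both fibers. On $X_1$ it is the output of the Langton algorithm, hence semistable by construction. On $X_K$ semistability is already known by hypothesis; and $(E_X,\theta_X)$ is $S$-flat with reflexive (indeed torsion-free) fibers, so the two semistability statements are compatible. One should also note that $\theta_X$ automatically satisfies the integrability/logarithmic-pole conditions since these are closed conditions that hold on the dense open $X_K$ and the modifications respect the log structure along $S$.

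The main obstacle, and the step I expect to require the most care, is the termination of the Langton-type procedure in the \emph{logarithmic, mixed-characteristic Higgs} setting simultaneously — ordinary Langton is for vector bundles over a DVR, and here one must simultaneously track: (i) the Higgs field and its logarithmic poles along $S$, (ii) the fact that the base is $\Spec W$ (mixed characteristic, so the special-fiber geometry is in characteristic $p$ where semistability can behave badly without the rank bound), and (iii) that the destabilizing objects are Higgs subsheaves, not arbitrary subsheaves. I would handle this by invoking Langer's boundedness results for the family of semistable logarithmic Higgs sheaves of bounded rank and fixed Chern classes, which is exactly what forces the discrete invariant controlling the Langton iteration to stabilize, and then checking that his hypotheses are met under $r\le p$ and the vanishing of the Chern classes.
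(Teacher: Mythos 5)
Your high-level strategy (cite Langer's Langton-type theorem to get a torsion-free extension with semistable special fiber) is the same as the paper's, but there are three concrete gaps that the paper's proof has to work to close and your sketch does not.

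First, you gloss over the extension of the Higgs field itself. A naive coherent extension $F_X$ of $E_{X_K}$ will not in general be stable under $\theta_{X_K}$: the map $\theta_{X_K}$ may only send $F_X$ into $p^{-r}F_X\otimes\Omega^1_{X/W}(\log S)$ for some $r>0$. The paper fixes this by replacing $\theta_{X_K}$ with $p^r\theta_{X_K}$ for a large uniform $r$, and then uses the \emph{graded} hypothesis in an essential way: because $(E_{X_K},\theta_{X_K})$ is graded, rescaling the Higgs field gives an isomorphic Higgs bundle, $(E_{X_K},p^r\theta_{X_K})\cong(E_{X_K},\theta_{X_K})$, so the generic fiber condition is preserved. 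Your proposal never invokes the graded hypothesis, and without it (or some substitute) this step simply fails; scheme-theoretic image or saturation does not make $F_X$ closed under $\theta_{X_K}$.

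Second, you attribute the use of $r\le p$ to the \emph{termination} of Langton's algorithm. In the paper this step is just a citation of Langer's Theorem 5.1 in~\cite{Lan14}, which does not need the rank bound; termination holds without it. The actual role of $r\le p$ (together with the vanishing of Chern classes, shown in Section~\ref{sect:local_constancy}) is in the passage from a semistable Higgs \emph{sheaf} on the special fiber to a Higgs \emph{bundle}, via Langer's Lemmas 2.5 and 2.6 of~\cite{Lan19}. Your sketch never addresses the issue of local freeness at all, and so as written it only produces a torsion-free Higgs sheaf with semistable reduction, not the Higgs bundle asserted by the theorem; one then still needs the final easy lemma that a torsion-free coherent sheaf on $X$ whose restriction to $X_1$ is locally free is itself locally free.
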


\begin{proof}
 The proof works in several steps.

 \begin{enumerate}
 \item[Step 1.]
 We first construct an auxiliary logarithmic coherent Higgs sheaf $(F_{X},\Theta_{X})$ on $X$ such that
 \begin{enumerate}
 \item $F_{X}$ is reflexive (and hence torsion-free);
 \item there is an isomorphism $(F_{X},\Theta_{X})\mid_{X_K} \cong (E_{X_K},\theta_{X_K})$.
 \end{enumerate}
 First of all, $E_{X_K}$ admits a coherent extension $F_X$ to $X$. Replacing $F_{X}$ with $F^{**}_{X}$, this is reflexive.

 We must construct a logarithmic Higgs field. We know that $F_{X}\subset F_{X_K}\cong E_{X_K}$. Work locally, over open $W$-affines $\mathcal{U}_{\alpha}$, and pick a finite set of generators $f_i$ for $F_{\mathcal{U}_{\alpha}}$. Then, for each $\alpha$, there exists an $r_{\alpha}\geq 0$ with

 $$\displaystyle \theta_{X_{K}}(f_i)\subset p^{-r_{\alpha}} F_{\mathcal{U}_{\alpha}}\otimes_{\mathcal{O}_{\mathcal{U}_{\alpha}}}\Omega^1_{\mathcal{U}_{\alpha}/W}(\log S\cap \mathcal{U}_{\alpha})$$
 for each $i$. As $X$ is noetherian, there exists a finite set of open affines $\{\mathcal{U}_\alpha\}_{\alpha}$, which cover $X$. By the finiteness of $\alpha$'s, there is a \emph{uniform} $r$ such that
 \begin{equation}\label{equ:extension_Higgs}
 \displaystyle p^ r\theta_{X_{K}}\colon F_{X}\rightarrow F_{X}\otimes_{\mathcal{O}_{X}}\Omega^1_{X/W}(\log S).
 \end{equation}

 Set $\Theta_{X}:=p^r\theta_{X_K}$; this makes sense by the above formula~\eqref{equ:extension_Higgs}. Then $(F_{X},\Theta_{X})$ is a Higgs sheaf on $X$. We claim that $(F_{X},\Theta_{X})_{X_K}\cong (E_{X_K},\theta_{X_K})$. Indeed, as $ (E_{X_K},\theta_{X_K})$ is a graded Higgs bundle, there exists an isomorphism $$ (E_{X_K},p^r\theta_{X_K})\cong (E_{X_K},\theta_{X_K}).$$
 Finally, as $F_X$ is torsion-free, it is automatically $W$-flat.
 \item [Step 2.]
 We now claim that there exists a logarithmic Higgs sheaf $(E_{X},\theta_{X})$ extending $(E_{X_K},\theta_{X_K})$ such that the logarithmic Higgs sheaf $(E_{X_1},\theta_{X_1})$ on the special fiber $X_1$ is semistable.
Choose an arbitrary reflexive extension $F = (\widetilde{E}_X, \widetilde{\theta}_X)$ as in the Step 1.
 Set $L$ to be the smooth Lie algebroid whose underlying coherent sheaf is the logarithmic tangent sheaf, and whose bracket and anchor maps are trivial.
 By \cite[Lemma 3.2]{Lan14} $F$ can be viewed as a $W$-flat $\mathcal{O}_X$-coherent $L$-module of relative pure dimension. By \cite[Theorem 5.1]{Lan14}, there exists an sub-$L$-module $E = (E_X, \theta_X) \subset F$ such that $E_K=(E_{X_K}, \theta_{X_K})$ and $E_k = (E_{X_1}, \theta_{X_1})$ is semistable. We emphasize that $(E_{X},\theta_{X})$ is merely a Higgs sheaf; we don't yet know it is a vector bundle.

 The $(E_{X},\theta_{X})$ constructed by Langer's theorem is torsion-free by design, hence also $W$-flat. We claim that the Chern classes of $(E_{X_1},\theta_{X_1})$ vanish because the Chern classes of $(E_{X_K},\theta_{X_K})$ do; this is what we do in \autoref{sect:local_constancy}.

 \item[Step 3.]
 The special fiber $(E_{X_1},\theta_{X_1})$ is a semistable logarithmic Higgs sheaf with rank $r\leq p$. Moreover, the Chern classes all vanish. Then it directly follows from \cite[Theorem 2.2]{Lan19} that $(E_{X_1},\theta_{X_1})$ is locally free.

 \item[Step 4.]
 Finally, it follows from the easy argument below that $E_{X}$ is a vector bundle. \qedhere
 \end{enumerate}
\end{proof}

\begin{lemma}Let $\mathcal{F}_X$ be a torsion-free and coherent sheaf over $X$. If $\mathcal{F}_X\mid_{X_1}$ is locally free, then $\mathcal{F}_X$ is locally free.
\end{lemma}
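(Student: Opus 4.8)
The plan is to reduce the statement to a local question about the fibers of a coherent sheaf, and then invoke the standard criterion that a finitely generated module over a local ring which is flat over the base and whose special fiber is free is itself free. Concretely, first I would observe that since $X$ is noetherian, local freeness is checked stalk-by-stalk at each point $x \in X$; since $X_1$ is a closed subscheme cut out by the single nonzerodivisor $p$, the points of $X_1$ are exactly the points of $X$ lying over the closed point of $\Spec W$, and every point of $X$ specializes to one of these. So it suffices to work at the local ring $A = \mathcal{O}_{X,x}$ for $x \in X_1$, which is noetherian local with $p$ in its maximal ideal, and $M = (\mathcal{F}_X)_x$ a finitely generated $A$-module.

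Next I would use that $\mathcal{F}_X$ is torsion free over $X$, hence $M$ is $p$-torsion free, i.e. $p$ is a nonzerodivisor on $M$; equivalently $M$ is flat over $W$ (since $W$ is a DVR and flatness over a DVR is the same as being torsion free). The hypothesis that $\mathcal{F}_X\mid_{X_1}$ is locally free says exactly that $M/pM$ is a free $A/pA$-module. Now I would apply the local criterion for freeness: if $M$ is a finitely generated module over a noetherian local ring $A$, with $t = p$ a nonzerodivisor on both $A$ and $M$, and $M/tM$ is free over $A/tA$, then $M$ is free over $A$. This is standard — lift a basis of $M/pM$ to elements of $M$, get a surjection $A^n \to M$ by Nakayama, and check injectivity using that $p$ is a nonzerodivisor and that the induced map is an isomorphism mod $p$ (a snake-lemma / successive-approximation argument, or one invokes \cite[Tag 00MH] or the analogous statement in Matsumura). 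Hence $M$ is free, and $\mathcal{F}_X$ is locally free in a neighborhood of $x$.

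Finally I would note that local freeness of $\mathcal{F}_X$ at the generic fiber points (points not in $X_1$) is automatic, since there $p$ is a unit and $(\mathcal{F}_X)\mid_{X_K} \cong (E_{X_K},\theta_{X_K})$ is already a vector bundle by hypothesis; alternatively, once $\mathcal{F}_X$ is locally free at all points of $X_1$ and $X$ is of finite type over the noetherian ring $W$, the locus where $\mathcal{F}_X$ fails to be locally free is closed and contains no point of $X_1$, hence is empty because every closed point of $X$ lies in $X_1$. Therefore $\mathcal{F}_X$ is locally free everywhere.

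The only mild subtlety — not really an obstacle — is making sure one is allowed to check freeness fiberwise over $W$: this works because $W$ is a DVR with a single nonzero closed point, so the fiber over the closed point is cut out by one equation $p$, and torsion-freeness of $\mathcal{F}_X$ gives precisely the flatness over $W$ needed to run the local criterion. Everything else is the standard commutative algebra lemma, which I would simply cite.
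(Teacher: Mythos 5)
Your proof is correct and takes essentially the same route as the paper: both reduce to the local commutative-algebra fact that a finitely generated $p$-torsion-free module over a noetherian local $W$-algebra whose reduction mod $p$ is free is itself free, and both then use properness of $X/W$ to conclude that the closed non-locally-free locus, having empty intersection with $X_1$, is empty. You simply spell out the local criterion that the paper compresses into the one-line assertion that the singular locus of $\mathcal F_X\mid_{X_1}$ coincides with $Z\cap X_1$.
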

\begin{proof}
By \cite[Lemma 2.1.7]{HuLe10}, the localization $\mathcal{F}_x$ is free over $\mathcal{O}_{X,x}$ for any $x\in X_1$. As $X_1$ contains all closed points in $X$, $\mathcal{F}$ is locally free.
\end{proof}

We emphasize that we do not know the Higgs sheaf $(E_{X_1},\theta_{X_1})$ is graded in general. We will deduce this in our situation using a Lefschetz-style theorem.

\section{Local constancy of Chern classes}\label{sect:local_constancy}

To explain the local constancy of this section, we first need some preliminaries. Let $S$ be a separated scheme and let $f\colon X\rightarrow S$ be smooth projective morphism. Let $\ell$ be an odd prime number which is invertible on $S$. For any non-negative integer $i$ and any integer $j\in \mathbb{Z}$, the $\ell$-adic sheaf $R^if_*\mathbb{Z}_{\ell}(j)$ is a constructable, locally constant sheaf, i.e., a lisse $\ell$-adic sheaf, by the smooth base change theorem. Moreover, for any point $s\in S$, the proper base change theorem implies that the natural morphism $R^if_*\mathbb{Z}_\ell(j)_s\rightarrow R^i(f_s)_*\mathbb{Z}_\ell(j)$ is an isomorphism. In particular, if $\xi\in H^0(S,R^if_*\mathbb{Z}_\ell(j))$ then it is automatically ``locally constant". The following proposition is surely well-known. As we could not find a reference, we document a proof.

 \begin{proposition}\label{bundle_constancy}
 Let $S$ be a separated irreducible noetherian scheme. Let $f\colon X\rightarrow S$ be a smooth proper morphism. Let $\mathcal{E}$ be a vector bundle on $X$. If $\overline s$ is a geometric point such that the $\ell$-adic Chern classes of $\mathcal{E}_{\overline s}$ vanish, then the Chern classes of $\mathcal{E}$ vanish at every geometric point.
 \end{proposition}

 \begin{proof}
As discussed above, the result directly follows if we show that
\begin{itemize}

\item for every geometric point $\bar{s}$ of $S$ and non-negative integers $i, j$, the natural map $H^0(S, R^if_*\mathbb{Z}_{\ell}(j))\rightarrow H^i(X_{\bar s}, \mathbb{Z}_{\ell}(j))$ is injective; and
\item the Chern classes live in $H^0(S, R^{i}f_*\mathbb{Z}_\ell(j))$ for appropriate $i,j$.
\end{itemize}

We first explain how to show the former. As explained in the preamble, by a combination of the smooth and proper base change theorems, the map
$$R^if_*\mathbb{Z}_\ell(j)_{\bar s}\rightarrow R^i(f_{\bar s})_*\mathbb{Z}_\ell(j)$$ is an isomorphism of lisse $\mathbb{Z}_{\ell}$-sheaves on $X_{\bar s}$. On the other hand, by the equivalence of local systems and representations of fundamental group (which, as $S$ is noetherian, by definition reduces to the torsion case \cite[0DV4]{stacks-project}), the $\mathbb{Z}_{\ell}$-module $H^0(S, R^if_*\mathbb{Z}_{\ell}(j))$ is precisely the set of $\pi_1^\et(S, \bar{s})$-invariants of $R^if_*\mathbb{Z}_\ell(j)_{\bar s}$. By the explicit construction of the equivalence of categories, the induced map $$ H^0(S, R^if_*\mathbb{Z}_{\ell}(j))\cong H^i(X_{\bar s}, \mathbb{Z}_{\ell}(j))^{\pi_1^\et(S, \bar{s})}\rightarrow H^i(X_{\bar s}, \mathbb{Z}_{\ell}(j))$$ is simply the inclusion of the invariants.

The second bullet point amounts to \emph{defining} Chern classes in this level of generality. Again, this is well-known, but we indicate how to do this.
The key is the \emph{splitting principle}; see, e.g., \cite[02UK]{stacks-project} for a reference (in the context of Chow groups). Let $f\colon \mathbb{F}\mathcal{E}\rightarrow X$ be the full flag scheme associated to the vector bundle $\mathcal{E}$; then the following two properties hold:
\begin{itemize}
\item The induced map $f^*\colon H^i_{\et}(X,\mathbb{Q}_\ell)\rightarrow H^i_{\et}(\mathbb{F}\mathcal{E},\mathbb{Q}_\ell)$ is injective.
\item The vector bundle $f^*\mathcal{E}$ has a filtration whose subquotients are line bundles.
\end{itemize}

We first construct the \emph{first Chern class} for line bundles in the appropriate cohomology group. Let $L$ be a line bundle on $X$. Then the isomorphism class of $L$ gives a class in $H^1_{\et}(X,\mathcal{O}_X^*)$. Consider the Leray spectral sequence:
$$E^{pq}_2:=H^p_{\et}(S,R^qf_{*,\et}\mathcal{O}_X^*)\Rightarrow H_{\et}^{p+q}(X,\mathcal{O}_X^*)$$
By the low-degree terms of the Leray spectral sequence, there is a natural exact sequence
$$0\rightarrow H^1_{\et}(S,f_*\mathcal{O}_X^*)\rightarrow H^1_{\et}(X,\mathcal{O}_X^*)\rightarrow H^0(S,R^1f_*\mathcal{O}_X^*).$$

(See e.g. \cite[Eqns. 9.2.11.3 and 9.2.11.4 on p. 256-257]{FGAexplained}.) For a line bundle, we define the $\ell$-adic Chern class via the Kummer sequence, which induces a map

$$ R^1f_*(\mathcal{O}_X^*)\rightarrow R^2f_{*,\et}\mathbb{Z}_{\ell}(-1).$$
This map is compatible with base change and hence agrees with the notion of $\ell$-adic first Chern class for smooth projective varieties over fields. In particular, we obtain a class $c_1(L)$ in $H^0_{\et}(S,R^2f_{*}\mathbb{Z}_{\ell}(-1))$.

We now define Chern classes for rank $r>1$ vector bundles $\mathcal{E}$. By the splitting principle, we may suppose that $\mathcal{E}$ has a filtration whose subquotients are line bundles $\mathcal{L}_i$. Then the total Chern class is multiplicative in such iterated extensions:
$$\displaystyle 1+c_1(\mathcal{E}) + \cdots +c_r(\mathcal{E})= \prod_{i=1}^r (1 + c_1(\mathcal{L}_i)),$$
where multiplication is interpreted via the cup-product map, $R^{2i}f_{*}\mathbb{Z}_{\ell}(-i))\otimes R^{2j}f_{*}\mathbb{Z}_{\ell}(-j))\rightarrow R^{2(i+j)}f_{*}\mathbb{Z}_{\ell}(-(i+j)))$. In particular, the $j$-th Chern class of $\mathcal{E}$ is an element of $H^0_{\et}(X,R^{2k}f_{*}\mathbb{Z}_{\ell}(-k))$
and the result follows.
\end{proof}

\begin{corollary}\label{cor_const_chern}
Let $S$ be a separated irreducible noetherian scheme. Let $f\colon X\rightarrow S$ be a smooth proper morphism.
Let $\mathcal{E}$ be a coherent sheaf over $X$ with a finite locally free resolution
 \[0\rightarrow \mathcal{E}^m \rightarrow \mathcal{E}^{m-1} \rightarrow \cdots \rightarrow \mathcal{E}^{0} \rightarrow \mathcal{E}\rightarrow 0.\]
 Suppose $\mathcal{E}$ is flat over $S$, i.e. for all $x\in X$, the stalk of $\mathcal{E}$ at $x$ is flat over the local ring $\mathcal{O}_{S,s}$ where $s=f(x)\in S$. Suppose there is a point $s\in S$ such that the $\mathbb{Q}_{\ell}$-Chern classes of $\mathcal{E}_{s}$ vanish. Then for every point $s\in S$, the $\mathbb{Q}_\ell$-Chern classes of $\mathcal{E}_s$ vanish.

\end{corollary}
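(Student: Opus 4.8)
The plan is to reduce the statement about a general coherent sheaf $\mathcal{E}$ to the case of a locally free sheaf already handled in Proposition \ref{bundle_constancy}. The key observation is that $\mathbb{Q}_l$ Chern classes are additive on short exact sequences, so one can define $c_i(\mathcal{E})$ (rationally) via the Whitney sum formula applied to the given finite locally free resolution: in the Grothendieck group, $[\mathcal{E}] = \sum_{j=0}^m (-1)^j [\mathcal{E}^j]$, and hence the total Chern class of $\mathcal{E}$ in $\bigoplus_i H^0(S, R^{2i}f_*\mathbb{Q}_l(i))$ is determined by $c(\mathcal{E}) = \prod_{j} c(\mathcal{E}^j)^{(-1)^j}$, which makes sense after inverting $l$ and after noting (via Proposition \ref{bundle_constancy}) that each $c(\mathcal{E}^j)$ lives in the "global sections" group $\bigoplus_i H^0(S, R^{2i}f_*\mathbb{Q}_l(i))$ and is compatible with base change to every geometric point.

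The steps I would carry out: First, invoke Proposition \ref{bundle_constancy}'s construction to place each $c_i(\mathcal{E}^j)$ in $H^0(S, R^{2i}f_*\mathbb{Q}_l(i))$, compatibly with restriction to any geometric point $\bar s$. Second, use the flatness hypothesis on $\mathcal{E}$ over $S$: this guarantees that for every $s \in S$ the restricted complex $0 \to \mathcal{E}^m_s \to \cdots \to \mathcal{E}^0_s \to \mathcal{E}_s \to 0$ remains exact (the higher $\mathrm{Tor}$'s of $\mathcal{E}$ against $\kappa(s)$ vanish by flatness, so tensoring the resolution with $\kappa(s)$ preserves exactness), so that $[\mathcal{E}_s] = \sum_j (-1)^j[\mathcal{E}^j_s]$ in $K_0(X_s)$ and therefore $c(\mathcal{E}_s) = \prod_j c(\mathcal{E}^j_s)^{(-1)^j}$ in $H^*_{\text{\'et}}(X_s, \mathbb{Q}_l)$. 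Third, combine: define $c(\mathcal{E}) := \prod_j c(\mathcal{E}^j)^{(-1)^j} \in \bigoplus_i H^0(S, R^{2i}f_*\mathbb{Q}_l(i))$; by base-change compatibility this restricts at each geometric point $\bar s$ to $c(\mathcal{E}_{\bar s})$. Fourth, since the group $H^0(S, R^{2i}f_*\mathbb{Q}_l(i))$ is the group of global sections of a lisse sheaf on the irreducible scheme $S$, a section vanishing at one geometric point vanishes identically (a locally constant section of a locally constant sheaf over a connected base that is zero somewhere is zero everywhere); applying this to the hypothesis that $c_i(\mathcal{E}_s) = 0$ for some $s$ yields $c_i(\mathcal{E}_{\bar s}) = 0$ for all geometric points $\bar s$.

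The main technical point — and the step most prone to subtlety — is justifying that the restriction of the locally free resolution to a fiber $X_s$ remains a resolution of $\mathcal{E}_s$; this is exactly where the flatness hypothesis on $\mathcal{E}$ over $S$ is used, and it is what allows the Whitney-sum identity for $c(\mathcal{E}_s)$ to be read off fiberwise and matched with the global class. One should note that it suffices to work with $\mathbb{Q}_l$-coefficients, so torsion phenomena in the Chern classes of the individual bundles $\mathcal{E}^j$ (which need not themselves have vanishing Chern classes, and whose classes need not lie in $H^0$ integrally) cause no trouble: only the alternating product, equal to $c(\mathcal{E})$, is asserted to lie in the global sections group, and its base-change compatibility is inherited from that of the individual $c(\mathcal{E}^j)$. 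Everything else — additivity of Chern classes, the Leray-type argument placing classes in $H^0(S, R^{2i}f_*\mathbb{Q}_l(i))$, and the rigidity of global sections of lisse sheaves over an irreducible base — is formal given Proposition \ref{bundle_constancy}.
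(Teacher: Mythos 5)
Your proposal is correct and follows essentially the same route as the paper: show that flatness of $\mathcal{E}$ over $S$ ensures the locally free resolution remains exact when restricted to each fiber $X_s$, apply the Whitney sum formula fiberwise to express $c(\mathcal{E}_s)$ as an alternating product of $c(\mathcal{E}^j_s)$, and invoke Proposition \ref{bundle_constancy} for the locally free $\mathcal{E}^j$. The only cosmetic difference is the justification of fiberwise exactness — you invoke vanishing of $\mathrm{Tor}^{\mathcal{O}_{S,s}}_i(\mathcal{E}_x,\kappa(s))$ directly, while the paper decomposes the resolution into short exact sequences at the level of stalks and runs an induction on syzygies — but these are the same standard argument in two equivalent phrasings.
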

\begin{proof}
 We only need to show that the restriction on the fiber $X_s$ for any point $s\in S$ induces the following exact sequence
 \begin{equation} \label{exact_seq_restrict}
 0\rightarrow \mathcal{E}^m \mid_{X_s}\rightarrow \mathcal{E}^{m-1} \mid_{X_s} \rightarrow \cdots\rightarrow \mathcal{E}^0 \mid_{X_s} \rightarrow \mathcal{E} \mid_{X_s} \rightarrow 0.
 \end{equation}
 where $\mathcal{E}^{i} \mid_{X_s}$ is the restriction of $\mathcal{E}^{i}$ on $X_s$. Indeed, the Whitney sum formula implies that
 \[c\left(\mathcal{E} \mid_{X_s}\right) = \prod_{i=0}^{m} c\left(\mathcal{E}^i \mid_{X_s}\right)^{(-1)^i},\]
 where $c(\cdot)$ is the total Chern class, and then \autoref{bundle_constancy} would imply the result.

 In the following we prove the exactness of \eqref{exact_seq_restrict}.
Since exactness is a local property, we may reduce to the case where both $X=\mathrm{Spec}(B)$ and $S=\mathrm{Spec}(A)$ are affine. In this setting, we have $\mathcal{E}^i\mid_{X_s}(X_s) = \mathcal{E}^i(X)\otimes_Ak_s$, where $k_s$ denotes the residue field at $s\in S$. By shrinking $S$ and $X$ if necessary, there exists a finite free resolution of $B$-module
\begin{equation} \label{eq_glex}
0\to \mathcal{E}^m(X)
 \to \mathcal{E}^{m-1}(X)
 \to \cdots
 \to \mathcal{E}^{0}(X)
 \to \mathcal{E}(X)\to 0.
\end{equation}
By our assumption, all terms in \eqref{eq_glex} are flat over $A$, so the exactness of \eqref{eq_glex} is preserved under tensoring with any $A$-module. In particular, the sequence
\begin{equation}
0\to \mathcal{E}^m(X)\otimes_Ak_s
 \to \mathcal{E}^{m-1}(X)\otimes_Ak_s
 \to \cdots
 \to \mathcal{E}^{0}(X)\otimes_Ak_s
 \to \mathcal{E}(X)\otimes_Ak_s\to 0.
\end{equation}
is exact, which is precisely the affine version of \eqref{exact_seq_restrict}.
\end{proof}

In the proof of \autoref{thm:langton}, note that $E_X$ is perfect because $X$ is regular. Hence we may plug in \autoref{cor_const_chern}, as explained in Steps 3 and 4 of the proof of \autoref{thm:langton} to deduce that $(E_{\mathcal{X}},\theta_{\mathcal{X}})$ is a \emph{Higgs bundle}, i.e., such that the underlying vector bundle is locally free.

 \section{A Lefschetz-style theorem for morphisms of Higgs bundles, after Arapura}\label{section:arapura}

 In this section, we temporarily change the notation. Let $Y/k$ be a $d$-dimensional smooth projective variety defined over an algebraically closed field. Let $S$ be a normal crossing divisor. We first review a vanishing theorem of Arapura.

 Recall that a logarithmic Higgs bundle over $(Y,S)$ is a vector bundle $E$ over $Y$ together with an $\mathcal{O}_Y$-linear map
 \[\theta\colon E\rightarrow E\otimes \Omega_Y^1(\log S)\]
 such that $\theta\wedge \theta = 0$. This integrability condition induces a \emph{Higgs complex}
 \[DR(E,\theta) = (0 \rightarrow E \overset\theta\rightarrow E \otimes \Omega^1_Y(\log S)
 \overset\theta\rightarrow E \otimes \Omega^2_Y(\log S)
 \overset\theta\rightarrow \cdots).\]
 We set the \emph{Higgs cohomology} to be:
 \[ H^*_{\mathrm{Hig}}(Y,(E,\theta)):=\mathbb{H}(Y,DR(E,\theta)) \]
 One verifies directly that
\begin{equation} \label{H0Hig}
H^0_{\mathrm{Hig}}(Y,(E,\theta)) = \{e\in H^0(Y,E) \mid \theta(e)=0\}.
\end{equation}
 The following is a fundamental vanishing theorem due to Arapura, see \cite[Theorem 1 and Lemma 4.3]{Ar19}.
 \begin{theorem}\label{thm:Ara} Let $(E,\theta)$ be a nilpotent semistable logarithmic Higgs bundle on $(Y,S)$ with vanishing Chern classes in $H_\et^*(Y,\mathbb{Q}_\ell)$. Let $L$ be an ample line bundle on $Y$. Suppose that either $\mathrm{char}(k)=0$, or $\mathrm{char}(k)=p$, $(Y,S)$ is liftable modulo $p^2$, and $d+\mathrm{rank} E<p$. Then
 \begin{enumerate}
\item for any $i>d$, one has $\mathbb{H}^i(Y,DR(E,\theta)\otimes L) = 0$;
\item for any $i<d$, $\mathbb{H}^i\Big(Y,DR(E,\theta)\otimes L^\vee\big(-S\big)\Big) = 0$.
\end{enumerate}
\end{theorem}

 As Arapura notes, all one really needs to assume is that $c_1(E)=0$ and $c_2(E).L^{d-2}=0$ in $H_\et^*(Y,\mathbb{Q}_\ell)$. Note that if a Higgs bundle is semistable with vanishing Chern classes, then so is the dual. The second term follows directly from the first via Grothendieck-Serre duality.

 We use \autoref{thm:Ara} to prove a Lefschetz theorem.
\begin{setup}\label{setup:vanishing}Let $Y/k$ be a smooth projective variety over a perfect field of characteristic $p$ and of dimension $d\geq 2$. Let $S\subset Y$ be a simple normal crossings divisor (possibly empty). Let $ D \subset Y$ be a smooth ample divisor that meets $S$ transversely and such that $\mathcal{O}(D-S)$ is also ample. Let $j\colon D\rightarrow Y$ denote the closed embedding.

We suppose that $(Y,D,S)$ has a lifting $(\widetilde{Y},\widetilde{D},\widetilde{S})$ over $W_2(k)$, i.e. $\widetilde{Y}$ is a scheme over $W_2(k)$ with two closed subschemes $\widetilde{D}$, $\widetilde{S}$, where all three of the preceding schemes are flat over $W_2(k)$, such that $(\widetilde{Y},\widetilde{D},\widetilde{S})\otimes_{W(k)} k \cong (Y,D,S)$.
\end{setup}

 Let $(E,\theta)$ be a logarithmic Higgs bundle over $Y$. The Higgs field $\theta\mid_{D}$ on $E\mid_{D}$ is defined as the composite map as in the following diagram:
 \begin{equation*}
 \xymatrix{
 E\mid_{D} \ar[r]^(0.4){j^*\theta} \ar[dr]_{\theta\mid_{D}}
 & E\mid_{D}\otimes j^*\Omega^1_Y (\log S)
 \ar[d]^{\mathrm{id}\otimes\mathrm{d}j}\\
 & E\mid_{D} \otimes \Omega^1_{D} (\log (D\cap S)).
 }
 \end{equation*}
Then one has the following result.
 \begin{lemma}\label{lem:CohEquiv} Setup as in \autoref{setup:vanishing}. Let $(E,\theta)$ be a nilpotent semistable logarithmic Higgs bundle on $Y$ of rank $r$ with trivial Chern classes and semistable restriction on $D$. Suppose further that $d+r < p$. Then the restriction functor induces isomorphisms
 \[res\colon H^i_{\mathrm{Hig}}(Y,(E,\theta))
 \overset{\sim}{\rightarrow} H^i_{\mathrm{Hig}}(D,(E,\theta)\mid_{D})\]
 for all $i\leq d-2$ and an injection
 \[res\colon H^{d -1}_{\mathrm{Hig}}(Y,(E,\theta))
 \hookrightarrow
 H^{d -1}_{\mathrm{Hig}}(D,(E,\theta)\mid_{D})\]
 \end{lemma}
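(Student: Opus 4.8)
The plan is to deduce the cohomology comparison from the vanishing statement in Theorem~\ref{thm:Ara} via the standard short exact sequence comparing a coherent sheaf on $Y$ with its restriction to the ample divisor $D$. Concretely, for the de Rham complex $DR(E,\theta)$ on $(Y,S)$, I would consider the short exact sequence of complexes
\begin{equation*}
0 \to DR(E,\theta)\otimes \mathcal O_Y(-D) \to DR(E,\theta) \to DR(E,\theta)\mid_D \to 0,
\end{equation*}
where the left-hand term makes sense because twisting by a line bundle is compatible with the Higgs differential, and the right-hand term is the de Rham complex of $(E\mid_D,\theta\mid_D)$ on $(D, S\cap D)$ — here one uses that $D$ meets $S$ transversely so that $\Omega^1_Y(\log S)\mid_D$ sits in an exact sequence with $\Omega^1_D(\log S\cap D)$ and $\mathcal O_D$, giving the identification of the restricted complex with $DR(E\mid_D,\theta\mid_D)$ after a short filtration argument. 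Taking hypercohomology yields a long exact sequence relating $H^*_{\mathrm{Hig}}(Y,(E,\theta))$, $H^*_{\mathrm{Hig}}(D,(E,\theta)\mid_D)$, and $\mathbb H^*(Y, DR(E,\theta)\otimes \mathcal O_Y(-D))$.

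The key point is then to show that $\mathbb H^i(Y, DR(E,\theta)\otimes \mathcal O_Y(-D))$ vanishes for $i\leq d-1$. I would like to apply the dual form of Theorem~\ref{thm:Ara} recorded just above (the Grothendieck--Serre duality consequence giving $H^i(Y, DR(E,\theta)\otimes L^\vee(-S))=0$ for $i<d$), taking $L$ so that $L^\vee(-S)=\mathcal O_Y(-D)$, i.e. $L=\mathcal O_Y(D-S)$. This is exactly why Setup~\ref{setup:vanishing} demands that $\mathcal O(D-S)$ be ample: we need $L$ ample to invoke Arapura's vanishing. The hypotheses $d+r\leq p$ and liftability of $(Y,S)$ mod $p^2$ are precisely case (b) of Theorem~\ref{thm:Ara}, and semistability with trivial Chern classes is assumed; the nilpotence hypothesis has been removed in the preceding remark. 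So the dual vanishing gives $\mathbb H^i(Y, DR(E,\theta)\otimes \mathcal O_Y(-D))=0$ for $i<d$, hence for all $i\leq d-1$.

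Feeding this vanishing into the long exact sequence gives the claim: for $i\leq d-2$ both the $\mathbb H^i$ and $\mathbb H^{i+1}$ of the twisted complex vanish, so $res$ is an isomorphism on $H^i_{\mathrm{Hig}}$; and for $i=d-1$ the term $\mathbb H^{d-1}(Y, DR(E,\theta)\otimes\mathcal O_Y(-D))$ still vanishes, so $res$ is injective on $H^{d-1}_{\mathrm{Hig}}$ (we make no claim about surjectivity there, since $\mathbb H^d$ of the twist need not vanish). One should double-check that $(E,\theta)\mid_D$ genuinely satisfies the hypotheses needed to even make sense of the target — it does: semistability of the restriction is assumed, and its Chern classes vanish by restriction.

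The main obstacle I anticipate is the identification of the restricted complex $DR(E,\theta)\mid_D$ with the honest logarithmic de Rham complex $DR(E\mid_D,\theta\mid_D)$ on $(D,S\cap D)$: one must verify, using the transversality of $D$ and $S$, that the conormal sequence $0\to\mathcal O_D(-D)\to \Omega^1_Y(\log S)\mid_D\to \Omega^1_D(\log S\cap D)\to 0$ induces, on exterior powers and after twisting by $E\mid_D$, a filtration whose associated graded recovers the target de Rham complex, so that the naive restriction of $DR(E,\theta)$ is quasi-isomorphic to $DR(E\mid_D,\theta\mid_D)$ — or, more simply, that the middle term of the short exact sequence of complexes is literally the restriction and that restriction commutes with forming the de Rham complex because $\theta\mid_D$ was defined as the composite in the displayed diagram. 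This is routine but needs to be stated carefully; everything else is a formal consequence of Arapura's theorem and its dual.
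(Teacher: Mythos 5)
Your first step is exactly the paper's: the short exact sequence of complexes $0 \to DR(E,\theta)\otimes\mathcal O_Y(-D) \to DR(E,\theta) \to DR(E,\theta)\otimes j_*\mathcal O_D \to 0$, followed by Arapura's dual vanishing (Theorem~\ref{thm:Ara} with $L=\mathcal O_Y(D-S)$) to kill the hypercohomology of the twisted complex in degrees $<d$, yielding an isomorphism onto $\mathbb H^i(Y,DR(E,\theta)\otimes j_*\mathcal O_D)$ for $i\leq d-2$ and an injection for $i=d-1$. Up to this point the proposal is correct and matches the paper.

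The gap is precisely the ``main obstacle'' you flag at the end, and neither of your two suggested resolutions works. The naive restriction $DR(E,\theta)\mid_D$ has $i$-th term $E\mid_D\otimes j^*\Omega^i_Y(\log S)$ with differential $j^*\theta$; the target $DR((E,\theta)\mid_D)$ has $i$-th term $E\mid_D\otimes\Omega^i_D(\log S\cap D)$ with differential $\theta\mid_D$. These are not the same complex, nor are they quasi-isomorphic in general; the conormal exact sequence produces a short exact sequence of complexes
\[
0 \to DR\big((E,\theta)\mid_D\big)\otimes\mathcal I/\mathcal I^2[-1] \to DR(E,\theta)\mid_D \to DR\big((E,\theta)\mid_D\big) \to 0,
\]
and the discrepancy $\mathbb H^i\big(DR((E,\theta)\mid_D)\otimes\mathcal I/\mathcal I^2[-1]\big) = \mathbb H^{i-1}\big(DR((E,\theta)\mid_D)\otimes\mathcal O_D(-D)\big)$ must be shown to vanish for $i<d$ by a \emph{second} application of Arapura's dual vanishing, this time on the $(d-1)$-dimensional pair $(D,S\cap D)$ for the Higgs bundle $(E,\theta)\mid_D$, using that $\mathcal O_D(-D)$ is at least as negative as $\mathcal O_D(-(D-S))\otimes\mathcal O_D(-S\cap D)$ with $\mathcal O(D-S)\mid_D$ ample. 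This second step is not a formal filtration argument: it is a genuine cohomological vanishing, and it is exactly where the hypothesis that $(E,\theta)\mid_D$ is semistable enters (Arapura's theorem requires semistability with trivial Chern classes on $D$, not just on $Y$), as well as the $W_2$-lifting of $(D,S\cap D)$ arranged in Setup~\ref{setup:vanishing}. Your proposal treats this as routine bookkeeping, so as written it does not establish the lemma; once you insert the second exact sequence and the second vanishing, you recover the paper's proof.
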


Let $(E,\theta)$ and $(E,\theta)'$ be two logarithmic Higgs bundles over $Y$. Denote $\mathcal{E}=\mathcal{H}\mathrm{om}(E,E')$, and for any local section $f\in \mathcal{E}$, denote $\Theta(f) = \theta'\circ f - f\circ \theta$. Then $\Theta$ defines a Higgs field on the vector bundle $\mathcal{E}$. We denote
\[\mathcal{H}\mathrm{om}((E,\theta),(E,\theta)') := (\mathcal{E},\Theta).\]
By \eqref{H0Hig}, we have $H^0_{\rm Hig}(\mathcal{E},\Theta)=\{f\colon E\to E'\mid \theta'\circ f = f\circ \theta\} =:\mathrm{Hom}((E,\theta),(E,\theta)')$.

 \begin{corollary}\label{cor:lefschetz_semistable_higgs}
 Setup as in \autoref{setup:vanishing}. Let $(E,\theta)$ and $(E,\theta)'$ be two nilpotent semistable logarithmic Higgs bundles over $Y$ of rank $r$ and $r'$ respectively, where $d+rr'< p$. Suppose further that both Higgs bundles have trivial $\mathbb{Q}_\ell$-Chern classes and semistable restrictions to $D$. Then one has
\begin{enumerate}
 \item an isomorphism
 \[\mathrm{Hom}((E,\theta),(E,\theta)') \simeq
 \mathrm{Hom}((E,\theta)\mid_{D},(E,\theta)'\mid_{D})\]
 \item an injection
 \[H^1_{\mathrm{Hig}}(Y,\mathcal{H}\mathrm{om}((E,\theta),(E,\theta)'))
 \hookrightarrow
 H^1_{\mathrm{Hig}}(D,\mathcal{H}\mathrm{om}((E,\theta)\mid_{D},(E,\theta)'\mid_{D})).\]
\end{enumerate}
 \end{corollary}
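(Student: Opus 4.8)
\emph{Proof plan.} The plan is to deduce everything from Lemma~\ref{lem:CohEquiv} applied to the internal Hom. Set $(\mathcal H,\Theta):=\mathcal{H}\mathrm{om}\big((E,\theta),(E,\theta)'\big)$, so that $\mathcal H=E^\vee\otimes E'$ carries the natural Higgs field $\Theta(\phi)=\theta'\circ\phi-(\phi\otimes\mathrm{id})\circ\theta$; this is a logarithmic Higgs bundle on $(Y,S)$ of rank $rr'$. By the very definition of Higgs cohomology, $H^0_{\mathrm{Hig}}\big(Y,(\mathcal H,\Theta)\big)=\ker\big(\Theta\colon H^0(Y,\mathcal H)\to H^0(Y,\mathcal H\otimes\Omega^1_Y(\log S))\big)$, and an element of this kernel is precisely a global homomorphism $E\to E'$ that intertwines the two Higgs fields, i.e. $H^0_{\mathrm{Hig}}\big(Y,(\mathcal H,\Theta)\big)=\mathrm{Hom}\big((E,\theta),(E,\theta)'\big)$, and similarly over $D$; under these identifications the restriction map on $H^0_{\mathrm{Hig}}$ is exactly the map $\phi\mapsto\phi\mid_D$. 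Finally, the formation of the internal Hom commutes with restriction to $D$ both as a bundle and as a Higgs bundle, $(\mathcal H,\Theta)\mid_{D}=\mathcal{H}\mathrm{om}\big((E,\theta)\mid_D,(E,\theta)'\mid_D\big)$, which one checks directly against the definition of the restricted Higgs field in the diagram preceding Lemma~\ref{lem:CohEquiv}.

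Next I would verify that $(\mathcal H,\Theta)$ (and its restriction to $D$) satisfies the hypotheses of Lemma~\ref{lem:CohEquiv}. Its rank is $rr'$, and $d+rr'\le p$ is exactly the numerical hypothesis. Its $\mathbb Q_l$-Chern classes are trivial: $c_1(\mathcal H)=r\,c_1(E')-r'\,c_1(E)=0$, and since all $\mathbb Q_l$-Chern classes of $E$ and $E'$ vanish, the Whitney formula gives that those of $\mathcal H$ vanish as well. The essential point is semistability: $(E,\theta)^\vee$ is semistable with trivial Chern classes (the remark after Theorem~\ref{thm:Ara}), and the tensor product $(E,\theta)^\vee\otimes(E,\theta)'$ of two semistable logarithmic Higgs bundles with trivial Chern classes remains semistable with trivial Chern classes because the rank product $rr'$ is $\le p$; this uses a tensor product theorem for semistable sheaves in positive characteristic (Langer), possibly combined with the preperiodicity of such Higgs bundles discussed in the proof of Theorem~\ref{thm:langton}. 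The identical argument over $(D,D\cap S)$ — using that $(E,\theta)\mid_D$ and $(E,\theta)'\mid_D$ are semistable with trivial Chern classes (the vanishing being inherited from $Y$), and that $D$ lifts to $\widetilde D$ over $W_2(k)$ — shows that $(\mathcal H,\Theta)\mid_D$ is semistable with trivial Chern classes.

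Granting this, Lemma~\ref{lem:CohEquiv} applies to $(\mathcal H,\Theta)$. Taking $i=0$ (legitimate since $0\le d-2$, as $d\ge 2$) gives the isomorphism in (1). For (2): when $d\ge 3$ we have $1\le d-2$, so Lemma~\ref{lem:CohEquiv} yields an isomorphism $H^1_{\mathrm{Hig}}\big(Y,(\mathcal H,\Theta)\big)\xrightarrow{\ \sim\ }H^1_{\mathrm{Hig}}\big(D,(\mathcal H,\Theta)\mid_D\big)$, in particular an injection; when $d=2$ we have $1=d-1$, and Lemma~\ref{lem:CohEquiv} directly furnishes the injection. In either case (2) follows. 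The main obstacle in this plan is the semistability claim of the previous paragraph — that $\mathcal{H}\mathrm{om}$ of two semistable logarithmic Higgs bundles with vanishing Chern classes is again semistable with vanishing Chern classes in characteristic $p$; this is precisely what forces the quadratic hypothesis $d+rr'\le p$ rather than a linear one, and it is the only step that is not routine bookkeeping.
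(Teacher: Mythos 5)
Your proposal matches the paper's proof in essence: both set $(\mathcal H,\Theta):=\mathcal{H}\mathrm{om}\big((E,\theta),(E,\theta)'\big)$, observe that internal $\Hom$ commutes with restriction to $D$ and that $H^0_{\mathrm{Hig}}$ computes $\mathrm{Hom}$ in the Higgs category, verify that $(\mathcal H,\Theta)$ and $(\mathcal H,\Theta)\mid_D$ satisfy the hypotheses of Lemma~\ref{lem:CohEquiv} (rank $rr'$, trivial Chern classes, semistable), and then read off (1) from $i=0\le d-2$ and (2) from $i=1$. Your handling of the semistability of $(\mathcal H,\Theta)$ is the right idea but is phrased a bit loosely: the paper's mechanism is that $(E,\theta)$ and $(E,\theta)'$, having rank $\le p$ and vanishing Chern classes on a $W_2$-liftable pair, are preperiodic (Footnote~\ref{footnote:preperiodic}) and hence strongly semistable; duals and tensor products of strongly semistable Higgs bundles are again strongly semistable, so $(\mathcal H,\Theta)$ and its restriction to $D$ are semistable. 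The bound $d+rr'\le p$ is what feeds Arapura's vanishing inside Lemma~\ref{lem:CohEquiv} for a bundle of rank $rr'$, not what directly forces semistability of the tensor product as your wording ``because the rank product $rr'$ is $\le p$'' suggests; the relevant rank hypotheses for preperiodicity of the factors are $r\le p$ and $r'\le p$, which of course follow from $d+rr'\le p$.
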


 \begin{proof}
 Since $\mathcal{H}\mathrm{om}((E,\theta),(E,\theta)')\mid_D \simeq \mathcal{H}\mathrm{om}((E,\theta)\mid_{D},(E,\theta)'\mid_{D})$, the Corollary follows from \autoref{lem:CohEquiv}, once we establish the semistability of both $\mathcal{H}\mathrm{om}((E,\theta),(E,\theta)')$ and $\mathcal{H}\mathrm{om}((E,\theta)\mid_{D},(E,\theta)'\mid_{D})$.

 By \autoref{footnote:preperiodic}, there exist two preperiodic Higgs-de Rham flows
\[\xymatrix{
& (V,\nabla,\mathrm{Fil})_1 \ar[dr] && (V,\nabla,\mathrm{Fil})_2 \ar[dr] && \cdots \\
(E,\theta) \ar[ur] && (E,\theta)_1 \ar[ur] && (E,\theta)_2 \ar[ur] &\\}\]
and
\[\xymatrix{
& (V,\nabla,\mathrm{Fil})'_1 \ar[dr] && (V,\nabla,\mathrm{Fil})'_2 \ar[dr] && \cdots \\
(E,\theta)' \ar[ur] && (E,\theta)'_1 \ar[ur] && (E,\theta)'_2 \ar[ur] &\\}\]
with initial terms by $(E,\theta)$ and $(E,\theta)'$ respectively.
Taking the Hom sheaves of the corresponding terms in these two flows yields a new preperiodic Higgs-de Rham flow with initial term
$\mathcal{H}\mathrm{om}((E,\theta),(E,\theta)')$
 \[\xymatrix@C=0cm{
& \mathcal{H}\mathrm{om} \left( (V,\nabla,\mathrm{Fil})_1, (V,\nabla,\mathrm{Fil})'_1 \right) \ar[dr]
&& \cdots \\
\mathcal{H}\mathrm{om} \left( (E,\theta)',(E,\theta)\right) \ar[ur] && \mathcal{H}\mathrm{om} \left( (E,\theta)'_1,(E,\theta)_1\right) \ar[ur] &\\}\]
 where the Hom of filtered integrable connections is defined analogously to that of Higgs bundles(see details in \cite[Section 2.1]{KYZ}). By \autoref{footnote:preperiodic} again, $\mathcal{H}\mathrm{om}((E,\theta),(E,\theta)')$ is semistable.
 Similarly, $\mathcal{H}\mathrm{om}((E,\theta),(E,\theta)')\mid_{D}$ is also semistable.
 \end{proof}

 \begin{proof}[Proof of \autoref{lem:CohEquiv}]
 In the following, we need to show the restriction functor induces an isomorphism between these two Higgs cohomology groups.
 By assumption in \autoref{setup:vanishing} that $\mathcal{O}(D-S)$ is ample,
 applying \autoref{thm:Ara} with $L=\mathcal{O}(D-S)$,
 one has
 \[\mathbb{H}^i(Y,DR(E,\theta)\otimes \mathcal{O}_Y(-D)) =0,\]
 for all $i<d$.
 The following exact sequence of complexes
 {\tiny{\begin{equation*} \xymatrix{
 0 \ar[d]&& 0 \ar[d] & 0 \ar[d] & 0 \ar[d] & \\
 DR(E,\theta)\otimes \mathcal{O}_Y(-D) \ar[d] \ar@{}[r]|(0.7){=} & 0 \ar[r] & E(-D) \ar[r]^(0.35){\theta} \ar[d]
 & E(-D) \otimes \Omega^1(\log S) \ar[r]^{\theta} \ar[d]
 & E(-D) \otimes \Omega^2(\log S) \ar[r]^(0.75){\theta} \ar[d]
 & \cdots \\
 DR(E,\theta) \ar[d] \ar@{}[r]|(0.7){=} & 0 \ar[r] & E \ar[r]^(0.35){\theta} \ar[d]
 & E \otimes \Omega^1(\log S) \ar[r]^{\theta} \ar[d]
 & E \otimes \Omega^2(\log S) \ar[r]^(0.75){\theta} \ar[d]
 & \cdots \\
 DR(E,\theta)\otimes j_*\mathcal{O}_{D} \ar[d] \ar@{}[r]|(0.7){=} & 0 \ar[r] & j_*(E\mid_{D}) \ar[r]^(0.35){\theta} \ar[d]
 & j_*(E\mid_{D}) \otimes \Omega^1(\log S) \ar[r]^{\theta} \ar[d]
 & j_*(E\mid_{D}) \otimes \Omega^2(\log S) \ar[r]^(0.75){\theta} \ar[d]
 & \cdots \\
 0&& 0 & 0 & 0 & \\
 }\end{equation*}}}
 induces
 \begin{equation*}
 H^i_{\mathrm{Hig}}(Y,(E,\theta)) \overset{\sim}{\longrightarrow}
 \mathbb{H}^i(Y,DR(E,\theta)\otimes j_*\mathcal{O}_{D})
 \end{equation*}
 for all $i\leq d -2$ and an injection
 \begin{equation*}
 H^{d-1}_{\mathrm{Hig}}(Y,(E,\theta)) \hookrightarrow
 \mathbb{H}^{d-1}(Y,DR(E,\theta)\otimes j_*\mathcal{O}_{D}).
 \end{equation*}
 On the other hand, let $\mathcal{I}$ denote the defining ideal sheaf of $D$ in $Y$. By assumption, the conormal bundle $\mathcal{I}/\mathcal{I}^2 = \mathcal{O}(-D)\mid_{D}$ is more negative than $\mathcal{O}_{D}(-D\cap S)$. Applying \autoref{thm:Ara} with $L=(\mathcal{I}/\mathcal{I}^2)^\vee\otimes \mathcal{O}_D(-D\cap S)$, we obtain, for all $i<d$, the vanishing
 \[\mathbb{H}^i(D,DR((E,\theta)\mid_{D}) \otimes \mathcal{I}/\mathcal{I}^2 [-1])=\mathbb{H}^{i-1}(D, DR((E,\theta)\mid_{D}) \otimes \mathcal{I}/\mathcal{I}^2)=0.\]
 The long exact sequence of the following exact sequence of complexes
 {\tiny{\begin{equation*}\xymatrix{
 0 \ar[d]&& 0 \ar[d] & 0 \ar[d] & 0 \ar[d] & \\
 DR((E,\theta)\mid_{D}) \otimes \mathcal{I}/\mathcal{I}^2 [-1] \ar[d] \ar@{}[r]|(0.7){=}
 & 0 \ar[r] & 0 \ar[r] \ar[d]
 & E\mid_{D} \otimes \mathcal{I}/\mathcal{I}^2 \ar[r]^{\theta\mid_{D}} \ar[d]
 & E\mid_{D} \otimes \Omega_{D}^1(\log S) \otimes \mathcal{I}/\mathcal{I}^2 \ar[r] \ar[d]
 & \cdots\\
 DR(E,\theta)\mid_{D} \ar[d] \ar@{}[r]|(0.7){=}
 & 0 \ar[r] & E\mid_{D} \ar[r]^(0.35){j^*\theta} \ar[d]
 & E\mid_{D} \otimes j^*\Omega^1(\log S) \ar[r]^(0.50){j^*\theta} \ar[d]
 & E\mid_{D} \otimes j^*\Omega^2(\log S) \ar[r]^(0.75){j^*\theta} \ar[d]
 & \cdots \\
 DR((E,\theta)\mid_{D}) \ar[d] \ar@{}[r]|(0.7){=}
 & 0 \ar[r] & E\mid_{D} \ar[r]^(0.35){\theta\mid_{D}} \ar[d]
 & E\mid_{D} \otimes \Omega_{D}^1(\log S) \ar[r]^{\theta\mid_{D}} \ar[d]
 & E\mid_{D} \otimes \Omega_{D}^2(\log S) \ar[r]^(0.75){\theta\mid_{D}} \ar[d]
 & \cdots \\
 0&& 0 & 0 & 0 & \\
 }\end{equation*}}}
 induces
 \begin{equation*}
 \mathbb{H}^i(D,DR(E,\theta)\mid_{D})
 \overset{\sim}{\longrightarrow}
 H^i_{\mathrm{Hig}}(D,(E,\theta)\mid_{D})
 \end{equation*}
 for all $i\leq d-2$ and an injection
 \begin{equation*}
 \mathbb{H}^{d-1}(D,DR(E,\theta)\mid_{D})
 \hookrightarrow
 H^{d-1}_{\mathrm{Hig}}(D,(E,\theta)\mid_{D}).
 \end{equation*}
 Then the lemma follows from the fact that
 \begin{equation*}
 \mathbb{H}^i(Y,DR(E,\theta)\otimes j_*\mathcal{O}_{D})
 \overset{\sim}{\longrightarrow}
 \mathbb{H}^i(D,DR(E,\theta)\mid_{D}). \qedhere
 \end{equation*}
 \end{proof}

 \section{The Higgs bundle \texorpdfstring{$(E_{\mathcal{X}}, \theta_{\mathcal{X}})$}{(EX, thetaX)} is graded.}\label{section:graded}

In this section, we aim to prove the Higgs bundle constructed in \autoref{thm:langton} is graded.

 \begin{proposition}\label{prop:graded}
 Setup as in \autoref{theorem:main}. Let $(E_{X_K},\theta_{X_K})$ be the logarithmic Higgs bundle attached to $\rho_X$ in \autoref{section:crystalline_dR_Higgs}. Let $(E_{\mathcal{X}}, \theta_{\mathcal{X}})$ be the $p$-adic completion of the Higgs bundle constructed in \autoref{thm:langton}. Then the Higgs bundle $(E_{\mathcal{X}},\theta_{\mathcal{X}})$ is graded.
 \end{proposition}

 To prove this, recall that a Higgs bundle $(E,\theta)$ is graded if and only if it is invariant under the $\mathbb{G}_m$ action, i.e., if
 \[(E,\theta) \simeq (E,t\theta) \quad \text{ for all } t\in \mathbb{G}_m.\]

Before giving the proof of \autoref{prop:graded}, we introduce some lemmas.
\begin{lemma}\label{lem_ss_EThetaXmidD1} Setup as in \autoref{prop:graded}.
The restriction of $(E_{\mathcal{X}},\theta_{\mathcal{X}})$ on the special fiber $D_1$ of $\mathcal{D}$ is still semistable.
\end{lemma}

\begin{proof}
By assumption,
 $(E_{\mathcal{X}},\theta_{\mathcal{X}})$ is a Higgs bundle with trivial Chern classes with a graded generic fiber and with a semistable special fiber. Since $(E_{X_1},\theta_{X_1})=(E_{\mathcal{X}},\theta_{\mathcal{X}})\mid_{X_1}$ is a graded semistable Higgs bundle with trivial Chern classes and rank $\leq p$, it is preperiodic by \autoref{footnote:preperiodic}, that is, there exists a preperiodic Higgs-de Rham flow $\mathrm{HDF}_{X_1}$ with initial term $(E_{X_1},\theta_{X_1})$.
 Restricting this preperiodic flow to $D_1$, yields a preperiodic Higgs-de Rham flow with initial term $(E_{X_1},\theta_{X_1})\mid_{D_1}$.
 Thus $(E_{\mathcal{X}},\theta_{\mathcal{X}})\mid_{D_1}=(E_{X_1},\theta_{X_1})\mid_{D_1}$ is semistable by \autoref{footnote:preperiodic}.
\end{proof}

The hypothesis that $\rho_D$ is geometrically absolutely residually irreducible implies the following stability.
\begin{lemma}\label{lem_s_EThetaDmidD1}
Setup as in \autoref{prop:graded}
The restriction of $(E_{\mathcal{D}},\theta_{\mathcal{D}})$ on the special fiber $D_1$ of $\mathcal{D}$ is stable.
\end{lemma}

\begin{proof}
By the setup of \autoref{ques:Raju}, we assume that $\rho_D$ is crystalline. \autoref{thm:periodic_crystalline} then implies that the $p$-adic Higgs bundle $(E_{\mathcal{D}},\theta_{\mathcal{D}})$ is periodic. In particular, its restriction on the special fiber $(E_{D_1},\theta_{D_1})_0:=(E_{\mathcal{D}},\theta_{\mathcal{D}})\mid_{D_1}$ initiates a periodic Higgs-de Rham flow:
 {\tiny{\begin{equation*} \label{eq_92}
 \xymatrix{
 & (V_{D_1},\nabla_{D_1}, \mathrm{Fil}_{D_1})_0 \ar[dr]
 && (V_{D_1},\nabla_{D_1}, \mathrm{Fil}_{D_1})_1 \ar[dr]
 && \cdots \\
 (E_{D_1},\theta_{D_1})_0 \ar[ur]
 && (E_{D_1},\theta_{D_1})_1 \ar[ur]
 && (E_{D_1},\theta_{D_1})_2 \ar[ur]\\
 } \end{equation*}}}
 Suppose $(E_{D_1},\theta_{D_1})_0$ is semistable yet not stable. Then it admits a sub-Higgs bundle $(E'_{D_1},\theta_{D_1})_0$ of strictly lower rank with zero slope. Running the Higgs-de Rham flow starting from this sub-Higgs bundle within the original flow yields a proper sub-flow
{\tiny{\begin{equation} \label{eq_93}
 \xymatrix{
 & (V'_{D_1},\nabla_{D_1}, \mathrm{Fil}_{D_1})_0 \ar[dr]
 && (V'_{D_1},\nabla_{D_1}, \mathrm{Fil}_{D_1})_1 \ar[dr]
 && \cdots \\
 (E'_{D_1},\theta_{D_1})_0 \ar[ur]
 && (E'_{D_1},\theta_{D_1})_1 \ar[ur]
 && (E'_{D_1},\theta_{D_1})_2 \ar[ur]\\
 } \end{equation}}}

By periodicity, we may identify $(E_{D_1},\theta_{D_1})_{mf}$ with $(E_{D_1},\theta_{D_1})_{0}$ for all $m\geq0$, where $f$ denotes the periodicity of the first flow. Under this identification, we get infinitely many slope-$0$ sub-Higgs bundles $\{(E'_{D_1},\theta_{D_1})_{mf}\mid m\geq0\}$ of $(E_{D_1},\theta_{D_1})_{0}$. We consider the moduli space $M$ of slope zero sub-bundle of $E'_{D_1,0}$. By Grothendieck's boundedness \cite[Lemma 1.7.9]{HuLe10}, $M$ is a quasi-projective variety defined over $k$. Since $k$ is finite, $M$ contains only finitely many $k$-points. In other words, there exists only finitely many different sub-bundles of degree $0$ defined over $k$. In particular, there exists some integer $m_1>m_0\geq0$ such
\[(E'_{D_1},\theta_{D_1})_{m_1f} = (E'_{D_1},\theta_{D_1})_{m_0f}.\]
After replacing $(E'_{D_1},\theta_{D_1})_{0}$ with $(E'_{D_1},\theta_{D_1})_{m_0f}$, the subflow \eqref{eq_93} is $(m_1-m_0)f$-periodic.
By \cite[Theorem 1.8]{LSZ13a}, one gets a proper sub-$\mathbb{F}_{p^{(m_1-m_0)f}}$-representation of $\rho_D \otimes_{\mathbb{Z}_{p^f}} \mathbb{F}_{p^{(m_1-m_0)f}}$,
contradicting the fact that $\rho_D$ is geometrically absolutely residually irreducible.
\end{proof}

\begin{lemma}[Lan-Sheng-Zuo]
\label{Lemma_Langton_style}
Let $R$ be a discrete valuation ring with fraction field $K$ and residue field $k$. Let $X/R$ be a smooth projective scheme and let $S\subset R$ be a relative (strict) normal crossings divisor. Let $(E_1,\theta_1)_X$ and $(E_2,\theta_2)_X$ be logarithmic Higgs bundles on $X$ that are isomorphic over $K$. Suppose that $(E_1,\theta_1)_{X_k}$ is semistable and $(E_2,\theta_2)_{X_k}$ is stable. Then $(E_1,\theta_1)_X$ and $(E_2,\theta_2)_X$ are isomorphic.
\end{lemma}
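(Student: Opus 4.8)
This is the standard separatedness statement for moduli of (Higgs-)semistable sheaves in the style of Langton, and the plan is to run that argument. Fix a uniformizer $\pi$ of $R$ and a relative polarization on $X/R$ (with respect to which (semi)stability is understood). Since $E_1,E_2$ are locally free, $\mathcal{H}\mathrm{om}(E_1,E_2)$ is a vector bundle on $X$, flat over $R$, so $H^0\big(X,\mathcal{H}\mathrm{om}(E_1,E_2)\big)$ is a free $R$-module and the given generic isomorphism $\phi_K$ lies in this module after inverting $\pi$. First I would choose $m\in\mathbb Z$ minimal with the property that $\phi:=\pi^m\phi_K$ extends to a morphism $\phi\colon E_1\to E_2$ over $X$. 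Scaling commutes with the Higgs fields and the failure of commutation is a section of a torsion-free $R$-sheaf vanishing over $X_K$, so $\phi$ is a morphism of logarithmic Higgs bundles that is an isomorphism over $X_K$. By minimality of $m$, the reduction $\phi_k:=\phi\mid_{X_k}$ is nonzero: if it vanished, then $\phi(E_1)\subseteq\pi E_2$ and $\pi^{-1}\phi$ would extend over $X$.

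The key step is then to show $\phi_k$ is an isomorphism of Higgs bundles on $X_k$. Flatness of $E_1,E_2$ over $R$ and of $X/R$ guarantees that the Hilbert polynomials of $(E_i)_{X_k}$ and $(E_i)_{X_K}$ agree; since $(E_1)_{X_K}\cong (E_2)_{X_K}$ these four polynomials coincide, so $(E_1)_{X_k}$ and $(E_2)_{X_k}$ share the same rank $r$ and the same slope $\mu$. Now $\mathrm{im}\,\phi_k$ is a $\theta$-invariant quotient of the semistable $(E_1)_{X_k}$, hence $\mu(\mathrm{im}\,\phi_k)\ge\mu$, and a $\theta$-invariant subsheaf of the \emph{stable} $(E_2)_{X_k}$, hence $\mu(\mathrm{im}\,\phi_k)<\mu$ as soon as $\mathrm{rank}\,\mathrm{im}\,\phi_k<r$. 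So $\mathrm{im}\,\phi_k$ has full rank $r$, whence $\ker\phi_k$ has rank $0$ and, being a subsheaf of the locally free $(E_1)_{X_k}$, is zero. Thus $\phi_k$ is an injection of full rank; taking determinants gives a nonzero map $\det(E_1)_{X_k}\to\det(E_2)_{X_k}$ of line bundles of equal degree, i.e. a nonzero section of a degree-$0$ line bundle on each connected component of $X_k$, which with respect to an ample polarization must be nowhere vanishing. Hence $\det\phi_k$ is an isomorphism, so locally $\phi_k$ is a matrix of unit determinant and $\phi_k$ is an isomorphism.

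To finish, I would spread this over $X$. The cokernel $C=\mathrm{coker}\,\phi$ is coherent with $C\mid_{X_K}=\mathrm{coker}\,\phi_K=0$ and $C\mid_{X_k}=\mathrm{coker}\,\phi_k=0$; as every point of $X$ lies on $X_K$ or on $X_k$, $C=0$ and $\phi$ is surjective. From $0\to\ker\phi\to E_1\xrightarrow{\phi}E_2\to 0$ and the $R$-flatness of $E_2$, reduction modulo $\pi$ remains exact, so $(\ker\phi)\mid_{X_k}=\ker\phi_k=0$; together with $(\ker\phi)\mid_{X_K}=\ker\phi_K=0$, this forces $\ker\phi=0$. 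Therefore $\phi\colon E_1\to E_2$ is an isomorphism over $X$ intertwining the Higgs fields, and $(E_1,\theta_1)_X\cong(E_2,\theta_2)_X$.

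\textbf{Expected main obstacle.} The hard part is the middle paragraph: upgrading ``$\phi_k\ne 0$'' to ``$\phi_k$ is an isomorphism.'' Semistability of $(E_1)_{X_k}$ by itself is insufficient — a nonzero Higgs morphism between semistable bundles of equal slope can fail to be injective, and an injective full-rank morphism can fail to be surjective — so it is essential that $(E_2)_{X_k}$ is genuinely \emph{stable}: this is what pins the rank of the image, after which the determinant/degree computation (which uses constancy of Hilbert polynomials in the flat family $X/R$) converts the injection into an isomorphism. A secondary point to check carefully is that the slope inequalities are being applied in the logarithmic Higgs category (legitimate, since images of morphisms of logarithmic Higgs sheaves are again Higgs subsheaves) and that one fixed relative polarization is used throughout.
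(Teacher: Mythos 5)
Your argument is correct and is precisely the standard Langton-style separatedness argument that the paper delegates to Lemma~6.4 of Lan--Sheng--Zuo (arXiv:1311.6424v1): clear denominators with a minimal power of the uniformizer, use the semistable/stable slope comparison to make the special-fiber map injective of full rank, pass to determinants to get surjectivity, and then spread over $X$ by Nakayama/flatness. The only small thing worth flagging is that after the slope argument one still needs the determinant-and-degree step (as you correctly include), since stability only rules out subsheaves of strictly smaller rank; otherwise this matches the reference's proof.
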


\begin{proof}
The non-logarithmic version may be found as Lemma~6.4 in \href{https://arxiv.org/pdf/1311.6424v1.pdf}{arXiv 1311.6424v1}. The argument in the logarithmic setting requires no changes. As the proof is short, we reproduce it here. Let $\alpha_K\colon (E_1,\theta_1)_{X_K}\rightarrow (E_2,\theta_2)_{X_K}$ be an isomorphism over the generic fiber. Let $\pi$ be a uniformizer of $R$. As $X$ is a noetherian topological space, there exists a minimal integer $a$ such that $\pi^a\alpha_K$ extends to a homomorphism:
\[\beta := \pi^a\alpha_K \colon (E_1,\theta_1)_X\rightarrow (E_2,\theta_2)_X\]
of Higgs bundles over $X$. (Note: $a$ could be negative.) By the minimality of $a$, $\beta\mod \pi$ must be non-zero.
As $(E_2,\theta_2)_{X_k}$ is Higgs stable and $(E_1,\theta_1)_{X_k}$ is Higgs semistable, one has
\[\mu((E_1,\theta_1)_{X_k})\leq \mu(\mathrm{im}(\beta\mod \pi)) \leq \mu((E_2,\theta_2)_{X_k}),\]
here $\mu$ stands for the slope.\footnote{See \cite[Definition 1.2.11]{HuLe10} and the discussion following it. Here, we select and fix an ample divisor, e.g. $D$, to define the degree of bundles over $X$.}
By the constancy of the Chern classes,
\[\mu((E_1,\theta_1)_{X_k}) = \mu((E_1,\theta_1)_{X_K})= \mu((E_2,\theta_2)_{X_K})= \mu((E_2,\theta_2)_{X_k}).\]
Thus $\mu(\mathrm{im}(\beta\mod \pi))=\mu((E_2,\theta_2)_{X_k})$. Since $(E_2,\theta_2)_{X_k}$ is stable, one has
\[\mathrm{im}(\beta\mod \pi) = (E_2,\theta_2)_{X_k}.\]
Thus $\beta\mod \pi$ is surjective. Since $(E_1,\theta_1)_{X_k}$ and $(E_2,\theta_2)_{X_k}$ have the same rank, $\beta\mod \pi$ is an isomorphism. Therefore $\beta$ is an isomorphism, as desired.
\end{proof}

\begin{corollary} \label{lem_identification}
Setup as in \autoref{prop:graded}.
The two Higgs bundles $(E_{\mathcal{X}},\theta_{\mathcal{X}})\mid_{\mathcal{D}}$ and $(E_{\mathcal{D}},\theta_{\mathcal{D}})$ over $\mathcal{D}$ are isomorphic.
\end{corollary}
\begin{proof}
 By \autoref{lem_ss_EThetaXmidD1} and \autoref{lem_s_EThetaDmidD1}, $(E_{\mathcal{X}},\theta_{\mathcal{X}})\mid_{D_1}$ is semistable and $(E_{\mathcal{D}},\theta_{D_1})$ is stable. Then the statement follows from \autoref{Lemma_Langton_style}.
 \end{proof}

 \begin{proof}[Proof of \autoref{prop:graded}]
 Recall that $(E_{\mathcal{D}}, \theta_{\mathcal{D}})$ is graded.
 For any $t\in \mathbb{G}_m$, one has
 \[f_{\mathcal{D}}\colon (E_{\mathcal{D}},\theta_{\mathcal{D}}) \simeq (E_{\mathcal{D}},t\theta_{\mathcal{D}}).\]
By \autoref{lem_identification} and \autoref{cor:lefschetz_semistable_higgs}, one gets $f_{X_1}\colon (E_{X_1},\theta_{X_1}) \simeq (E_{X_1},t\theta_{X_1})$. View $(E_{X_2},t\theta_{X_2})$ as a second lifting of $(E_{X_1},\theta_{X_1})$ via the mapping
 \[(E_{X_2},t\theta_{X_2}) \twoheadrightarrow (E_{X_1},t\theta_{X_1}) \xrightarrow[\simeq]{f_{X_1}^{-1}} (E_{X_1},\theta_{X_1}).\]
 Then $f_{X_1}$ can be lifted to $W_2$ if and only if $(E_{X_2},t\theta_{X_2})$ and $(E_{X_2},\theta_{X_2})$ are isomorphic liftings of $(E_{X_1},\theta_{X_1})$. By \cite[Proposition 4.2(2)]{KYZ}, the difference $c$ of the two liftings are located in
 \[H^1_{\mathrm{Hig}}\Big(\mathcal{E}\mathrm{nd}(E_{X_1},\theta_{X_1})\Big) \cong H^1_{\mathrm{Hig}}\Big(\mathcal{H}\mathrm{om}\big((E_{X_1},\theta_{X_1}),(E_{X_1},t\theta_{X_1})\big)\Big).\]
 Thus the class $c$
 \footnote{One can also construct the obstruction class directly. More precisely, choosing local liftings $f_i$ of $f_{X_1}$ (only as local $\mathcal{O}_{X_2}$-linear morphisms), one gets a $1$-cocyle $\Big(\big(\frac{f_i-f_j}{p}\big)_{i,j},\big(\frac{f_i\circ\theta_{X_2}-t\theta_{X_2}\circ f_i}{p}\big)_{i}\Big)$. This yields a well-defined class $c$ in $H^1_{\mathrm{Hig}}\Big(\mathcal{H}\mathrm{om}\big((E_{X_1},\theta_{X_1}),(E_{X_1},t\theta_{X_1})\big)\Big)$, i.e., it is independent of the choice of the local liftings. The proof is routine and similar as the proof of \cite[Theorem 4.1. 2)]{KYZ}.}
 is the obstruction to lift $f_{X_1}$ to $W_2$.

By \autoref{cor:lefschetz_semistable_higgs}, one has an injective map
 \[res\colon H^1_{\mathrm{Hig}}\Big(\mathcal{H}\mathrm{om}\big((E_{X_1},\theta_{X_1}),(E_{X_1},t\theta_{X_1})\big)\Big)
 \hookrightarrow
 H^1_{\mathrm{Hig}}\Big(\mathcal{H}\mathrm{om}\big((E_{D_1},\theta_{D_1}),(E_{D_1},t\theta_{D_1})\big)\Big).\]
 Since $f_{D_1}$ is liftable, the image of $c$ under $res$ vanishes. Thus $c=0$ and there is a lifting of $f_{X_1}$
 \[f_{X_{2}}'\colon (E_{X_{2}},\theta_{X_{2}})\longrightarrow (E_{X_{2}},t\theta_{X_{2}}).\]
 In general $f_{X_{2}}'\mid_{D_{2}}\neq f_\mathcal{D}\mid_{D_{2}}$. We consider the difference $c'\in H^0_{\rm Hig}(D_1,\mathcal{E}\mathrm{nd}(E_{D_1},\theta_{D_1}))$ between $f_{X_{2}}'\mid_{D_{2}}$ and $f_\mathcal{D}\mid_{D_{2}}$. Since the restriction map in 1) of \autoref{cor:lefschetz_semistable_higgs} is a bijection, we have a unique preimage $res^{-1}(c')$ of $c'$. Then we get a new isomorphism
 \[f_{X_{2}}\colon (E_{X_{2}},\theta_{X_{2}})\longrightarrow (E_{X_{2}},t\theta_{X_{2}}).\]
 by modifying the lifting $f_{X_{2}}'$ via $res^{-1}(c')$. By inductively lifting on each truncated level, we deduce that we may lift $f_{X_1}$ to an unique isomorphism $f_{\mathcal{X}}\colon (E_{\mathcal{X}},\theta_{\mathcal{X}}) \simeq (E_{\mathcal{X}},t\theta_{\mathcal{X}})$ such that $f_{\mathcal{X}}\mid_{\mathcal{D}}=f_\mathcal{D}$. Thus $(E_{\mathcal{X}},\theta_{\mathcal{X}})$ is graded.
 \end{proof}

 \section{Proof of \autoref{theorem:main} -- Flow \texorpdfstring{$\mathrm{HDF}_{X_1}$}{HDFX1} initialized with \texorpdfstring{$(E_{X_1},\theta_{X_1})$}{E,theta}}\label{section:HDR_X1}

 Setup as in \autoref{theorem:main}. By the work in \autoref{section:graded}, there is a \emph{graded} logarithmic Higgs bundle $(E_{\mathcal{X}},\theta_{\mathcal{X}})$ such that $(E_{X_K},\theta_{X_K}):=(E_{\mathcal{X}},\theta_{\mathcal{X}})\mid_{X_K}$ are semistable.
Our goal is to construct an $f$-periodic Higgs-de Rham flow over $\mathcal{X}$ with initial term $(E_{\mathcal{X}},\theta_{\mathcal{X}})$ such that its restriction on $\mathcal{D}$ is isomorphic to the $f$-periodic Higgs-de Rham flow in \eqref{equ:HDF} attached to the representation $\rho_D$ under \autoref{thm:periodic_crystalline}.
In this section, we will address the periodicity of $(E_{X_1},\theta_{X_1})$.

To ensure that consistent notation for the Higgs bundles in the to-appear Higgs-de Rham flows, we will re-denote $(E_{\mathcal{D}},\theta_{\mathcal{D}})$ as $(E_{\mathcal{D}},\theta_{\mathcal{D}})_0$,
$(E_{\mathcal{X}},\theta_{\mathcal{X}})$ as $(E_{\mathcal{X}},\theta_{\mathcal{X}})_0$,
$(E_{X_1},\theta_{X_1})\mid_{D_1}=(E_{\mathcal{X}},\theta_{\mathcal{X}})\mid_{D_1}$ as $(E_{D_1},\theta_{D_1})_0'$, and $(E_\mathcal{D},\theta_\mathcal{D})\mid_{D_1}$ as $(E_{D_1},\theta_{D_1})_0$.

 Since $(E_{X_1},\theta_{X_1})$ is a graded semistable (logarithmic) Higgs bundle with trivial Chern classes and $N\leq p$, it is preperiodic by \autoref{footnote:preperiodic}, i.e., there exists a preperiodic Higgs-de Rham flow $\mathrm{HDF}_{X_1}$ with initial term $(E_{X_1},\theta_{X_1})$. Restricting this preperiodic flow to $D_1$, one obtains a preperiodic Higgs-de Rham flow $\mathrm{HDF}_{X_1}\mid_{D_1}$ with initial term $(E_{D_1},\theta_{D_1})'_0$
{\tiny{\begin{equation}\xymatrix{
 & (V_{D_1},\nabla_{D_1}, \mathrm{Fil}_{D_1})'_0 \ar[dr]
 && (V_{D_1},\nabla_{D_1}, \mathrm{Fil}_{D_1})'_1 \ar[dr]
 && \cdots \\
 (E_{D_1},\theta_{D_1})'_0 \ar[ur]
 && (E_{D_1},\theta_{D_1})'_1 \ar[ur]
 && (E_{D_1},\theta_{D_1})'_2 \ar[ur]\\
 }\end{equation}}}
On the other hand, the restriction flow $\mathrm{HDF}_{\mathcal{D}}\mid_{D_1}$ is $f$-periodic with initial term $(E_{D_1},\theta_{D_1})_0$
 {\tiny{\begin{equation}\xymatrix{
 & (V_{D_1},\nabla_{D_1}, \mathrm{Fil}_{D_1})_0 \ar[dr]
 && (V_{D_1},\nabla_{D_1}, \mathrm{Fil}_{D_1})_1 \ar[dr]
 && \cdots \\
 (E_{D_1},\theta_{D_1})_0 \ar[ur]
 && (E_{D_1},\theta_{D_1})_1 \ar[ur]
 && (E_{D_1},\theta_{D_1})_2 \ar[ur]\\
 }\end{equation}}}

 \begin{proposition}
\label{prop_HDF_D1_coincide} The Higgs-de Rham flow $\mathrm{HDF}_{X_1}$ is $f$-periodic and satisfies
 $\mathrm{HDF}_{X_1}\mid_{D_1} = \mathrm{HDF}_{\mathcal{D}}\mid_{D_1}$.
 \end{proposition}

Before given the proof of \autoref{prop_HDF_D1_coincide}, we give two useful lemmas. The first one is a slightly strengthened version of \cite[Lemma 7.1]{LSZ13a}. The proof is exactly the same as that in \cite{LSZ13a}, and we leave it to the readers.
\begin{lemma} \label{Lem_LSZ_unique_Fil}
Let $Y$ be a smooth projective variety with a normal crossing divisor $S$, and let $(V, \nabla)$ be a flat bundle over $(Y,S)$. If there exists a Griffiths transverse filtration $\mathrm{Fil}$ on $(V,\nabla)$ such that the associated graded Higgs module $(E,\theta)$ is stable, then $\mathrm{Fil}$ is the unique gr-semistable filtration on $V$ up to a shift of index.
\end{lemma}

\begin{remark} \autoref{Lem_LSZ_unique_Fil} strengthens \cite[Lemma 7.1]{LSZ13a} in three respects: (1) it is formulated in the logarithmic setting; (2) it does not require the base field to be algebraically closed; and (3) it asserts the uniqueness of the gr-semistable filtration, rather than only the stable case.
\end{remark}

The second Lemma is about the uniqueness of Higgs-de Rham flows initial with a given stable Higgs bundle.
 \begin{lemma} \label{lem_uniqueHDF}
 Let $Y_2$ be a smooth $W_2$-scheme with a relative normal crossing divisor $S_2$. Let $(F,\theta)_0$ be a Higgs bundle over $(Y_1,S_1):=(Y_2,S_2)\otimes_{W_2} k$, which is stable and periodic. Then
\begin{enumerate}
 \item all Higgs bundles and integrable connections appeared in any periodic Higgs-de Rham flow initialized with $(F,\theta)_0$ are stable;
 \item periodic Higgs de Rham flows initialized with $(F,\theta)$ are unique up to isomorphism.
\end{enumerate}
 \end{lemma}

 \begin{proof} Let \begin{equation}\label{flow_Ftheta}
\xymatrix{
 & (V,\nabla,\mathrm{Fil})_0 \ar[dr]
 && (V,\nabla, \mathrm{Fil})_2 \ar[dr]
 && \cdots \\
 (F,\theta)_0 \ar[ur]
 && (F,\theta)_1 \ar[ur]
 && (F,\theta)_2 \ar[ur]\\
 }
\end{equation}
be a periodic Higgs-de Rham flow initialized with $(F,\theta)_0$.
By the periodicity, (1) follows from the following two points:
 \begin{itemize}
 \item If $(F,\theta)_{i+1}$ is stable, then $(V,\nabla)_i$ is stable.
 \item $(V,\nabla)_i$ is stable if and only if $(E,\theta)_i$ is stable.
 \end{itemize}
We explain each of these bullet points in turn. For the first, suppose that $(V',\nabla')$ a proper integrable sub-connection of $(V,\nabla)_i$ with $\mu(V')\geq \mu(V_i)$. Set $\mathrm{Fil}' = \mathrm{Fil}_i\mid_{V'}$. Then one gets a subsheaf $\mathrm{Gr}(V',\nabla',\mathrm{Fil}') \subset (E,\theta)_{i+1}$ with $\mu(\mathrm{Gr}(V',\mathrm{Fil}'))\geq \mu((E,\theta)_{i+1})$. Since $(E,\theta)_{i+1}$ is stable, $\mathrm{Gr}(V',\mathrm{Fil}') = 0$ or $=E_{i+1}$. Hence $V'=0$ or $V_{i}$. Thus $V_{i}$ is also stable.
For the second bullet point, the proof is the same as the first using the equivalence of categories furnished by the inverse Cartier functor.

For the proof of (2), we assume
\begin{equation}\label{flow_Ftheta'}
\xymatrix{
 & (V,\nabla,\mathrm{Fil})'_0 \ar[dr]
 && (V,\nabla, \mathrm{Fil})'_2 \ar[dr]
 && \cdots \\
 (F,\theta)_0 \ar[ur]
 && (F,\theta)'_1 \ar[ur]
 && (F,\theta)'_2 \ar[ur]\\
 }
\end{equation}
 is a second periodic flow initialized with $(F,\theta)_0$.
Taking the inverse Cartier functor, one has $(V,\nabla)'_0=(V,\nabla)_0$.
By \autoref{Lem_LSZ_unique_Fil},
$\mathrm{Fil}_{0}'= \mathrm{Fil}_{0}$. Taking the associated graded, one gets $(E,\theta)_1'=(E,\theta)_1$. Inductively, one shows that the two flows coincide with each other.
 \end{proof}

\begin{proof}[Proof of \autoref{prop_HDF_D1_coincide}]
By \autoref{lem_identification},
 $(E_{\mathcal{D}},\theta_{\mathcal{D}})_0\simeq (E_{\mathcal{X}},\theta_{\mathcal{X}})_0\mid_\mathcal{D}$. Now, we identify $(E_{\mathcal{D}},\theta_{\mathcal{D}})_0$ with $(E_{\mathcal{X}},\theta_{\mathcal{X}})_0\mid_{\mathcal{D}}$ with this isomorphism. In particular, one has
 \[(E_{D_1},\theta_{D_1})'_0 =(E_{\mathcal{X}},\theta_{\mathcal{X}})_0\mid_{D_1}= (E_{\mathcal{D}},\theta_{\mathcal{D}})_0\mid_{D_1}=:(E_{D_1},\theta_{D_1})_0.\]
 Then the equality $\mathrm{HDF}_{X_1}\mid_{D_1} = \mathrm{HDF}_{\mathcal{D}}\mid_{D_1}$ follows from \autoref{lem_uniqueHDF}.

 By the full faithfulness of the restriction functor as in \autoref{cor:lefschetz_semistable_higgs}, the map $\varphi_{D_1}\colon (E_{D_1},\theta_{D_1})_f \overset\sim\rightarrow (E_{D_1},\theta_{D_1})_0$ that witnesses the periodicity of $\mathrm{HDF}_{D_1}$ can be lifted canonically to a map $\varphi_{X_1}\colon (E_{X_1},\theta_{X_1})_f \overset\sim\rightarrow (E_{X_1},\theta_{X_1})_0$. This implies that the Higgs-de Rham flow $\mathrm{HDF}_{X_1}$ is also $f$-periodic.
\end{proof}

 \section{Proof of \autoref{theorem:main} -- Flow \texorpdfstring{$\mathrm{HDF}_{\mathcal{X}}$}{HDFX} initialized with \texorpdfstring{$(E_{\mathcal{X}},\theta_{\mathcal{X}})_0$}{EX,thetaX}}\label{section:HDR_X}

 In this section, we use two results of Krishnamoorthy-Yang-Zuo~\cite{KYZ} to lift $\mathrm{HDF}_{X_1}$, constructed in \autoref{section:HDR_X1}, onto $\mathcal{X}$.

 \begin{proposition}\label{prop_lift_HDF_W}
 There is an unique $f$-periodic Higgs-de Rham flow $\mathrm{HDF}_{\mathcal{X}}$ over $\mathcal{X}$, which lifts $\mathrm{HDF}_{X_1}$, with initial term $(E_{\mathcal{X}},\theta_{\mathcal{X}})$ and satisfying $\mathrm{HDF}_{\mathcal{X}}\mid_\mathcal{D} \simeq \mathrm{HDF}_{\mathcal{D}}$.
 \end{proposition}

 We prove this result inductively on the truncated level; in particular, we may assume we have already lifted $\mathrm{HDF}_{X_1}$ to an $f$-periodic Higgs-de Rham flow $\mathrm{HDF}_{X_n}$ over $X_n$, where $n\geq 1$ is a positive integer:
{\tiny{\begin{equation}\xymatrix{
 & (V_{X_n},\nabla_{X_n}, \mathrm{Fil}_{X_n})_0 \ar[dr]
 && (V_{X_n},\nabla_{X_n}, \mathrm{Fil}_{X_n})_{f-1} \ar[dr] \\
 (E_{X_n},\theta_{X_n})_0 \ar[ur]
 && \cdots \ar[ur]
 && (E_{X_n},\theta_{X_n})_f \ar@/^20pt/[llll]^{\varphi_n}\\
 }\end{equation}}}
 satisfying $\mathrm{HDF}_{X_n}\mid_{D_n} \simeq \mathrm{HDF}_{\mathcal{D}}\mid_{D_n}$ and with initial term $(E_{X_{n}},\theta_{X_{n}})_0:=(E_{X_n},\theta_{X_n})$.
 We only need to lift $\mathrm{HDF}_{X_n}$ to an $f$-periodic Higgs-de Rham flow $\mathrm{HDF}_{X_{n+1}}$ over $X_{n+1}$ satisfying $\mathrm{HDF}_{X_{n+1}}\mid_{D_{n+1}} \simeq \mathrm{HDF}_{\mathcal{D}}\mid_{D_{n+1}}$ and with initial term $(E_{X_{n+1}},\theta_{X_{n+1}})_0:=(E_{X_{n+1}},\theta_{X_{n+1}})$ as follows.

 First, taking the $(n+1)$-truncated level Cartier inverse functor $C_{n_1}^{-1}$(for the definition, see \cite[Section 4]{LSZ13a}), one gets
 \[(V_{X_{n+1}},\nabla_{X_{n+1}})_0 := C^{-1}_{n+1}((E_{X_{n+1}},\theta_{X_{n+1}})_0,(V_{X_n},\nabla_{X_n}, \mathrm{Fil}_{X_n})_{f-1},\varphi_n)\]
 which is an integrable connection. This integrable connection lifts $(V_{X_n},\nabla_{X_n})_0 $ and satisfies $(V_{X_{n+1}},\nabla_{X_{n+1}})_0 \mid_{D_{n+1}} = (V_{\mathcal{D}},\nabla_{\mathcal{D}})_0 \mid_{D_{n+1}}$.

 \begin{proposition}\label{prop:Fil_lifting}
 Notation as above. Inductively, for any $i\geq 0$,
\begin{enumerate}
 \item there is a unique Hodge filtration (see \autoref{def_Hodgefiltration}) $\mathrm{Fil}_{X_{n+1},i}$ on $(V_{X_{n+1}},\nabla_{X_{n+1}})_i$, which lifts $\mathrm{Fil}_{X_{n},i}$ and satisfies \[\mathrm{Fil}_{X_{n+1},i}\mid_{D_{n+1}} = \mathrm{Fil}_{\mathcal{D},i}\mid_{D_{n+1}}.\]

 \item taking the associated graded, we obtain a Higgs bundle
 \[(E_{X_{n+1}},\theta_{X_{n+1}})_{i+1} = \mathrm{Gr}(V_{X_{n+1}},\nabla_{X_{n+1}},\mathrm{Fil}_{X_{n+1}})_i,\]
 which lifts $(E_{X_{n}},\theta_{X_{n}})_{i+1}$ and satisfies
 \[(E_{X_{n+1}},\theta_{X_{n+1}})_{i+1}\mid_{D_{n+1}} = (E_{D_{n+1}},\theta_{D_{n+1}})_{i+1}.\]

 \item taking the $(n+1)$-truncated level Cartier inverse functor, one gets an integrable connection
 \[(V_{X_{n+1}},\nabla_{X_{n+1}})_{i+1} := C^{-1}_{n+1}((E_{X_{n+1}},\theta_{X_{n+1}})_i,(V_{X_n},\nabla_{X_n}, \mathrm{Fil}_{X_n})_{i},\mathrm{id})\]
 which lifts $(V_{X_n},\nabla_{X_n})_{i+1}$ and satisfies
 \[(V_{X_{n+1}},\nabla_{X_{n+1}})_{i+1} \mid_{D_{n+1}} = (V_{\mathcal{D}},\nabla_{\mathcal{D}})_{i+1} \mid_{D_{n+1}}.\]
\end{enumerate}
 \end{proposition}

 To prove this result, we need a result of Krishnamoorthy-Yang-Zuo~\cite{KYZ} about the obstruction to lifting the Hodge filtration. We recall this result.

 Let $(V,\nabla,\mathrm{F}^*)$ be a filtered de Rham bundle, see \autoref{def_Hodgefiltration} over $X_n/W_n$. Denote its modulo $p$ reduction by $({\overline{V}},{\overline{\nabla}},{\overline{\mathrm{F}}}^*)$. Let $(\widetilde{V},\widetilde{\nabla})$ be a lifting of the flat bundle $(V,\nabla)$ on $X_{n+1}$. The ${\overline{\mathrm{F}}}^*$ induces a Hodge filtration ${\overline{\mathrm{Fil}}}^*$ on $(\mathcal{E}\mathrm{nd}({\overline{V}}),{\overline{\nabla}}^{\mathrm{End}})$ defined by
 \[\overline{\mathrm{Fil}}^\ell \mathcal{E}\mathrm{nd}(\overline{V}) =\sum_{\ell_1}(\overline{V}/\overline{\mathrm{F}}^{\ell_1})^\vee \otimes \overline{\mathrm{F}}^{\ell_1+\ell-1}.\]
 As $(\mathcal{E}\mathrm{nd}({\overline{V}}),{\overline{\nabla}}^{\mathrm{End}},{\overline{\mathrm{Fil}}}^*)$ satisfies Griffiths transversality, the de Rham complex $(\mathcal{E}\mathrm{nd}( \overline{V}) \otimes \Omega^*_{X_1},\overline{\nabla}^{\mathrm{End}})$ induces the following complex, which we denote by $\mathscr C$:
 \[0\rightarrow
 \mathcal{E}\mathrm{nd}( \overline{V})/\overline{\mathrm{Fil}}^0 \mathcal{E}\mathrm{nd}( \overline{V})
 \overset{{\overline{\nabla}}^{\mathrm{End}}}{\rightarrow}
 \mathcal{E}\mathrm{nd}( \overline{V})/\overline{\mathrm{Fil}}^{-1} \mathcal{E}\mathrm{nd}( \overline{V})
 \otimes \Omega_{X_1}^1
 \overset{{\overline{\nabla}}^{\mathrm{End}}}{\rightarrow}
 \mathcal{E}\mathrm{nd}( \overline{V})/\overline{\mathrm{Fil}}^{-2} \mathcal{E}\mathrm{nd}( \overline{V})
 \otimes \Omega_{X_1}^2
 \rightarrow \cdots\]
 which is the quotient of $(\mathcal{E}\mathrm{nd}( \overline{V}) \otimes \Omega^*_{X_1},\overline{\nabla}^{\mathrm{End}})$ by the sub-complex $({\overline{\mathrm{Fil}}}^{-*}\mathcal{E}\mathrm{nd}({\overline{V}}) \otimes \Omega^*_{X_1},{\overline{\nabla}}^{\mathrm{End}})$, i.e., we have the following exact sequence of complexes of sheaves over $X_{1}$:
 \[0 \rightarrow
 ({\overline{\mathrm{Fil}}}^{-*}\mathcal{E}\mathrm{nd}({\overline{V}}) \otimes \Omega^*_{X_1},{\overline{\nabla}}^{\mathrm{End}})
 \rightarrow
 (\mathcal{E}\mathrm{nd}( \overline{V}) \otimes \Omega^*_{X_1},\overline{\nabla}^{\mathrm{End}})
 \rightarrow
 \mathscr C
 \rightarrow 0.
 \]
 Denote $(\overline{E},\overline{\theta}) = \mathrm{Gr}(\overline{V},\overline{\nabla},\overline{\mathrm{F}}^*)$. Then $(\mathcal{E}\mathrm{nd}(\overline{E}),\overline{\theta}^{\mathrm{End}}) = \mathcal{E}\mathrm{nd}((\overline{E},\overline{\theta}))$ is also a graded Higgs bundle. Here is the key observation about the complex $\mathscr C$: it is a successive extension of direct summands of the Higgs complex
 of the graded Higgs bundle $(\mathcal{E}\mathrm{nd}(\overline{E}),\overline{\theta}^{\mathrm{End}})$
 \[\left\{\left.(\mathcal{E}\mathrm{nd}(\overline{E})^{a-*,-a+*}\otimes \Omega^*_{X_1},{\overline\theta}^{\mathrm{End}})\right| a=0,1,2,\cdots \right\}.\]

 \begin{theorem}\cite[Theorem 3.9]{KYZ}\label{thm:KYZ} Notation as above.
\begin{enumerate}
 \item The obstruction of lifting the filtration $\mathrm{F}^*$ onto $(\widetilde{V},\widetilde{\nabla})$ with Griffiths transversality is located in $\mathbb{H}^1(X_1,\mathscr C)$.
 \item If the above the obstruction vanishes, then the lifting space is an $\mathbb{H}^0(X_1,\mathscr C)$-torsor.
\end{enumerate}
 \end{theorem}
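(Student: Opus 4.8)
The plan is to identify the set of liftings with a torsor under the complex $\mathscr C$, in the standard style of deformation theory, carrying out all bookkeeping with a \v{C}ech double complex. Throughout I write $(\overline V,\overline\nabla,\overline{\mathrm F}^*)$ for the reduction of $(V,\nabla,\mathrm F^*)$ modulo $p$, and I note first that $\ker(\mathcal O_{X_{n+1}}\to\mathcal O_{X_n})=p^n\mathcal O_{X_{n+1}}$ is canonically $\cong\mathcal O_{X_1}$ via multiplication by $p^n$, so all the ``first order'' sheaves below are naturally sheaves on $X_1$ and I suppress this twist. The preliminary and most delicate ingredient is a \emph{dictionary}: a filtration of the given type on $\widetilde V$ is the same datum as a section of the smooth projective relative flag bundle $\mathrm{Flag}(\widetilde V)\to X_{n+1}$, whose relative tangent sheaf along such a section is, on $X_1$, the sheaf $\mathrm{End}(\overline V)/\overline{\mathrm F}^0\mathrm{End}(\overline V)=\mathscr C^0$ (the complement of the parabolic subsheaf preserving the flag); and I must check that the Griffiths transversality defect of a candidate lift, divided by $p^n$, lands naturally in $\mathscr C^1=\mathrm{End}(\overline V)/\overline{\mathrm F}^{-1}\mathrm{End}(\overline V)\otimes\Omega^1_{X_1}$, the higher terms $\mathscr C^q$ being produced by integrability. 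Concretely I need three elementary compatibilities: (i) two lifts of $\mathrm F^*$ by subbundles differ by a section of $\mathscr C^0$; (ii) modifying such a lift by $\psi\in\mathscr C^0$ changes its Griffiths defect by $\mathrm d_{\mathscr C}\psi$, where $\mathrm d_{\mathscr C}$ is the differential of $\mathscr C$; and (iii) the Griffiths defect is a $\mathrm d_{\mathscr C}$-cocycle in $\mathscr C^1$, which uses $\widetilde\nabla\circ\widetilde\nabla=0$ together with the hypothesis that $\mathrm F^*$ is already Griffiths transverse over $X_n$ (so the relevant expressions are genuinely divisible by $p^n$). I expect (i)--(iii) to be the main obstacle; everything afterwards is formal.

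Granting the dictionary, I would first observe \textbf{local existence}: over any affine open $U\subset X_1$ the filtration $\mathrm F^*$ lifts to a filtration $\widetilde{\mathrm F}^*_U$ of $\widetilde V|_U$ by subbundles, since $\mathrm{Flag}(\widetilde V)\to X_{n+1}$ is smooth and sections lift over affines; this $\widetilde{\mathrm F}^*_U$ need not be Griffiths transverse, but by (iii) its defect $\beta_U\in\Gamma(U,\mathscr C^1)$ is a cocycle. Next, the \textbf{obstruction class}: choose an affine cover $\mathfrak U=\{U_i\}$ of $X_1$, local lifts $\widetilde{\mathrm F}^*_i$ with defects $\beta_i\in\check C^0(\mathfrak U,\mathscr C^1)$, and on overlaps set $\psi_{ij}:=\widetilde{\mathrm F}^*_i-\widetilde{\mathrm F}^*_j\in\check C^1(\mathfrak U,\mathscr C^0)$. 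Using (i)--(iii) I would verify that $(\psi_{ij},\beta_i)$ is a $1$-cocycle in the total complex of the \v{C}ech double complex $\check C^p(\mathfrak U,\mathscr C^q)$ computing $\mathbb H^\bullet(\mathscr C)$: the three components of the total differential evaluate respectively to $\delta\psi_{ij}=0$ (the difference cocycle condition, additive to first order), $\mathrm d_{\mathscr C}\psi_{ij}=\pm(\beta_i-\beta_j)$ (by (ii)), and $\mathrm d_{\mathscr C}\beta_i=0$ (by (iii)). Replacing the choices $\widetilde{\mathrm F}^*_i$ by $\widetilde{\mathrm F}^*_i+\phi_i$ with arbitrary $\phi_i\in\check C^0(\mathfrak U,\mathscr C^0)$ alters $(\psi_{ij},\beta_i)$ by the total coboundary of $(\phi_i)$, so the class $\mathrm{ob}\in\mathbb H^1(\mathscr C)$ is independent of choices.

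Finally, a global Griffiths transverse lift of $\mathrm F^*$ is exactly a choice of the $\widetilde{\mathrm F}^*_i$ with all $\psi_{ij}=0$ (so they glue) and all $\beta_i=0$ (so the glued filtration is Griffiths transverse); such a choice exists iff $(\psi_{ij},\beta_i)$ is a coboundary, i.e. iff $\mathrm{ob}=0$, while conversely a global lift plainly produces $\mathrm{ob}=0$. This gives part (1). For part (2), suppose $\mathrm{ob}=0$ and fix one global Griffiths transverse lift $\widetilde{\mathrm F}^*$. Any other lift of $\mathrm F^*$ by subbundles is $\widetilde{\mathrm F}^*+\phi$ for a unique global $\phi\in\Gamma(X_1,\mathscr C^0)$; by (ii) it is again Griffiths transverse iff its defect $0+\mathrm d_{\mathscr C}\phi$ vanishes, i.e. iff $\phi\in\Gamma\big(X_1,\ker(\mathscr C^0\xrightarrow{\mathrm d_{\mathscr C}}\mathscr C^1)\big)=\mathbb H^0(\mathscr C)$ (the last identification because $\mathscr C$ sits in non-negative degrees), and conversely adding any such $\phi$ yields a Griffiths transverse lift; the action is free and transitive, so the lifting space is an $\mathbb H^0(\mathscr C)$-torsor. (Conceptually the whole argument is Illusie's deformation theory for the section of the ``Griffiths transverse flag'' stack, whose relevant cotangent complex is $\mathscr C$; the hands-on \v{C}ech version above is more transparent and fits the style of the rest of the paper.)
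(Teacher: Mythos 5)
This theorem is imported: the paper cites it as \cite[Theorem 3.9]{KYZ} and gives no proof of its own, so there is no in-paper argument to compare against. Your proposal is nonetheless correct, and it is essentially the argument of the cited reference: local liftability of the filtration via smoothness of the relative flag bundle, the identification of the difference of two local lifts with a section of $\mathscr C^0$ and of the Griffiths-transversality defect (divided by $p^n$) with a section of $\mathscr C^1$, and then the standard \v{C}ech total-complex bookkeeping producing the obstruction in $\mathbb H^1(\mathscr C)$ and the $\mathbb H^0(\mathscr C)$-torsor structure. You are right that (i)--(iii) carry all the content; for the record they do check out against the paper's conventions: unwinding $\overline{\mathrm{F}}^{\ell}\mathrm{End}(\overline V)=\sum_{\ell_1}(\overline V/\overline{\mathrm{F}}^{\ell_1})^\vee\otimes\overline{\mathrm{F}}^{\ell_1+\ell-1}$ gives $\overline{\mathrm{F}}^{\ell}\mathrm{End}(\overline V)=\{\phi:\phi(\overline{\mathrm{F}}^{m})\subset\overline{\mathrm{F}}^{m+\ell}\ \text{for all } m\}$, so $\mathscr C^0$ is exactly the quotient of $\mathrm{End}(\overline V)$ by the parabolic (the relative tangent sheaf of the flag bundle along the section), and a compatible system of maps $\overline{\mathrm{F}}^{\ell}\to(\overline V/\overline{\mathrm{F}}^{\ell-1})\otimes\Omega^1$ is exactly a section of $\mathscr C^1$, as your defect computation requires. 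Two small points of care: all differentials here are logarithmic, so $\Omega^1_{X_1}$ should be read as $\Omega^1_{X_1}(\log S_1)$ and the defect computation uses that $\widetilde\nabla$ has log poles; and in (ii) the verification that the defect changes by $\nabla^{\mathrm{End}}\psi$ modulo $\overline{\mathrm{F}}^{-1}\mathrm{End}(\overline V)\otimes\Omega^1$ (rather than by some finer term) uses Griffiths transversality of $\mathrm F^*$ over $X_n$ once more; both are routine.
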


 Associated to the filtered de Rham bundle $(V_{X_1},\nabla_{X_1},\mathrm{Fil}_{X_1})$ and $(V_{D_1},\nabla_{D_1},\mathrm{Fil}_{D_1}) = (V_{X_1},\nabla_{X_1},\mathrm{Fil}_{X_1})\mid_{D_1}$, one has complex $\mathscr C_{X_1}$ of sheaves over $X_1$
 \[0\rightarrow
 \mathcal{E}\mathrm{nd}( V_{X_1})/\overline{\mathrm{Fil}}^0 \mathcal{E}\mathrm{nd}(V_{X_1})
 \overset{\nabla_{X_1}^{\mathrm{End}}}{\longrightarrow}
 \mathcal{E}\mathrm{nd}(V_{X_1})/\overline{\mathrm{Fil}}^{-1} \mathcal{E}\mathrm{nd}(V_{X_1})
 \otimes \Omega_{X_1}^1
 \overset{\nabla_{X_1}^{\mathrm{End}}}{\longrightarrow} \cdots\]
 and complex $\mathscr C_{D_1}$ of sheaves over $D_1$
 \[0\rightarrow
 \mathcal{E}\mathrm{nd}( V_{D_1})/\overline{\mathrm{Fil}}^0 \mathcal{E}\mathrm{nd}(V_{D_1})
 \overset{\nabla_{D_1}^{\mathrm{End}}}{\longrightarrow}
 \mathcal{E}\mathrm{nd}(V_{D_1})/\overline{\mathrm{Fil}}^{-1} \mathcal{E}\mathrm{nd}(V_{D_1})
 \otimes \Omega_{D_1}^1
 \overset{\nabla_{D_1}^{\mathrm{End}}}{\longrightarrow} \cdots\]
 satisfying
 \[\mathscr C_{D_1} = \mathscr C_{X_1} \mid_{D_1}.\]

 \begin{proposition}\label{prop:res_obs&tor} The restriction induces
 \begin{enumerate}
 \item an isomorphism $\mathbb{H}^0(X_1,\mathscr C_{X_1}) \overset{\sim}\longrightarrow \mathbb{H}^0(D_1,\mathscr C_{D_1})$, and
 \item an injection $\mathbb{H}^1(X_1,\mathscr C_{X_1}) \hookrightarrow \mathbb{H}^1(D_1,\mathscr C_{D_1})$.
 \end{enumerate}
 \end{proposition}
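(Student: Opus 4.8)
The plan is to reduce the statement about hypercohomology of the complexes $\mathscr C_{X_1}$ and $\mathscr C_{D_1}$ to the Lefschetz-style vanishing statement already proved in Lemma~\ref{lem:CohEquiv}. Recall the key observation recorded just before Theorem~\ref{thm:KYZ}: the complex $\mathscr C_{X_1}$ is a successive extension of direct summands of the Higgs (de Rham) complexes of the graded Higgs bundle $(\mathrm{End}(\overline E_{X_1}),\overline\theta^{\mathrm{End}})$, namely the complexes $(\mathrm{End}(\overline E_{X_1})^{p-\ast,-p+\ast}\otimes\Omega^\ast_{X_1},\overline\theta^{\mathrm{End}})$ for $p=0,1,2,\dots$. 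Since restriction to $D_1$ is compatible with all the maps involved and $\mathscr C_{D_1}=\mathscr C_{X_1}\mid_{D_1}$, it suffices to prove the analogous restriction statement for each of these direct-summand subquotient complexes, and then assemble the result by a five-lemma / spectral-sequence argument along the finite filtration of $\mathscr C_{X_1}$ by the sub complexes $(\overline{\Fil}^{-\ast}\mathrm{End}(\overline V_{X_1})\otimes\Omega^\ast_{X_1},\nabla^{\mathrm{End}})$.

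Concretely, first I would set $(\mathcal E,\Theta):=\mathrm{End}((E_{X_1},\theta_{X_1}))=\mathcal{H}\mathrm{om}((E_{X_1},\theta_{X_1}),(E_{X_1},\theta_{X_1}))$, a semistable logarithmic Higgs bundle on $(X_1,S_1)$ with trivial $\mathbb Q_\ell$ Chern classes (it is strongly semistable because $N\leq p$ and hence $N^2\leq p$; see Footnote~\ref{footnote:preperiodic}), whose restriction to $D_1$ is semistable. The hypothesis $N^2<p-\dim X$ of Theorem~\ref{theorem:main} is exactly what is needed to apply Corollary~\ref{cor:lefschetz_semistable_higgs}, equivalently Lemma~\ref{lem:CohEquiv}, to $(\mathcal E,\Theta)$ with $d=\dim X_1$ and $r=N^2$: we get $H^i_{\mathrm{Hig}}(X_1,(\mathcal E,\Theta))\xrightarrow{\sim}H^i_{\mathrm{Hig}}(D_1,(\mathcal E,\Theta)\mid_{D_1})$ for $i\leq d-2$ and an injection for $i=d-1$. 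The Higgs complex $DR(\mathcal E,\Theta)$ of this bundle is, up to the grading induced by the Hodge filtration on $\mathrm{End}$, the direct sum of the complexes $(\mathrm{End}(\overline E_{X_1})^{p-\ast,-p+\ast}\otimes\Omega^\ast_{X_1},\overline\theta^{\mathrm{End}})$ appearing above; so Lemma~\ref{lem:CohEquiv} gives the desired isomorphism on $H^0$ and injection on $H^1$ for each direct summand, since $\dim X\geq 2$ forces $0\leq d-2$ and $1\leq d-1$.

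Next I would run the filtration argument. Let $F_m\mathscr C_{X_1}$ denote the decreasing filtration on $\mathscr C_{X_1}$ coming from the Hodge filtration on $\mathrm{End}(\overline V_{X_1})$, with graded pieces the direct-summand Higgs complexes just discussed; restriction to $D_1$ carries $F_\bullet\mathscr C_{X_1}$ to $F_\bullet\mathscr C_{D_1}$ with the same graded pieces (restricted to $D_1$). For each graded piece we have, by the previous paragraph, an isomorphism on $\mathbb H^0$ and an injection on $\mathbb H^1$ after restriction. A routine induction on the length of the filtration, using the long exact sequences in hypercohomology attached to the short exact sequences of complexes $0\to F_{m+1}\to F_m\to \mathrm{gr}_m\to 0$ over $X_1$ and over $D_1$ together with the naturality of these sequences under restriction, then yields: $\mathbb H^0(\mathscr C_{X_1})\xrightarrow{\sim}\mathbb H^0(\mathscr C_{D_1})$ and $\mathbb H^1(\mathscr C_{X_1})\hookrightarrow\mathbb H^1(\mathscr C_{D_1})$. (The injectivity on $\mathbb H^1$ survives the induction precisely because at each stage we have surjectivity on $\mathbb H^0$ of the graded piece, so no spurious kernel is created; this is the standard bookkeeping in such Lefschetz arguments.)

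The main obstacle I anticipate is purely bookkeeping: making precise the identification of $\mathscr C_{X_1}$ as a successive extension of direct summands of $DR(\mathrm{End}(\overline E_{X_1}),\overline\theta^{\mathrm{End}})$, i.e. checking that the Hodge filtration on $\mathrm{End}$ interacts with the quotient construction defining $\mathscr C$ so that the associated graded is genuinely (a sub-sum of) the Higgs complex of the graded endomorphism Higgs bundle, and that all of this is functorial under $j^\ast$. Once that identification is in hand, the cohomological input is entirely supplied by Lemma~\ref{lem:CohEquiv} applied to $\mathrm{End}((E_{X_1},\theta_{X_1}))$, and the conclusion follows by the five-lemma. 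Note that we only ever need the $i\leq d-2$ isomorphism and the $i=d-1$ injection, both of which are available since $d=\dim X\geq 2$.
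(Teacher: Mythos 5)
Your proposal is correct and follows essentially the same route as the paper: the paper's proof is a one-liner invoking the observation that $\mathscr C$ is a successive extension of direct summands of the Higgs complex of $\mathrm{End}(\overline{E},\overline{\theta})$, then citing Lemma~\ref{lem:CohEquiv} and the five lemma. You have spelled out the same argument in more detail, including the correct bookkeeping that the hypothesis $N^2<p-\dim X$ is what makes Lemma~\ref{lem:CohEquiv} applicable to the rank-$N^2$ endomorphism Higgs bundle, and the correct four-/five-lemma chase preserving the isomorphism on $\mathbb H^0$ and injection on $\mathbb H^1$ through the filtration.
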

 \begin{proof}
 Since the complex $\mathscr C$ is a successive extension of direct summands of the Higgs complex, one gets the results by \autoref{lem:CohEquiv} and the five lemma.
 \end{proof}

\begin{proof}[Proof of \autoref{prop:Fil_lifting}]
Set
\[(V,\nabla,\mathrm{F}^*):=(V_{X_n},\nabla_{X_n}, \mathrm{Fil}_{X_n})_{i}, \qquad (\widetilde{V},\widetilde{\nabla}):=(V_{X_{n+1}},\nabla_{X_{n+1}})_{i}.\]
The uniqueness of the Hodge filtration follows \autoref{thm:KYZ} and \autoref{prop:res_obs&tor}. For existence, we denote by $c\in \mathbb{H}^1(X_1,\mathscr C_{X_1})$ the obstruction to lift $\mathrm{F}^*$ onto $X_{n+1}$.
 Since $\mathrm{F}^*\mid_{D_n}=\mathrm{Fil}_{D_n,i}$ is liftable, the image $res(c) \in \mathbb{H}^1(D_1,\mathscr C_{D_1})$ of $c$ under $res$ vanishes. Since $res$ is an injection by \autoref{prop:res_obs&tor}, $c=0$. Thus, $\mathrm{F}^*$ is also liftable. We choose a lifting $\widetilde{\mathrm{F}}^*{}'$. Denote by $c'\in \mathbb{H}^0(D_1,\mathscr C_{D_1})$ the difference between $\widetilde{\mathrm{F}}^*{}'\mid_{D_{n+1}}$ and $\mathrm{Fil}_{\mathcal{D},i}\mid_{D_{n+1}}$. Since $res$ is an isomorphism, one uses $res^{-1}(c')$ to modify the original filtration $\widetilde{\mathrm{F}}^*{}'$. Then one gets a new filtration $\widetilde{\mathrm{F}}^*$ such that $\widetilde{\mathrm{F}}^*\mid_{D_{n+1}} = \mathrm{Fil}_{\mathcal{D},i}\mid_{D_{n+1}}$.

 The terms 2) and 3) follows from term 1) directly.
 \end{proof}

 Run the Higgs-de Rham flow with initial term
 \[((E_{X_{n+1}},\theta_{X_{n+1}})_0,(V_{X_n},\nabla_{X_n}, \mathrm{Fil}_{X_n})_{f-1},\varphi_n)\]
 together with the Hodge filtrations constructed as in \autoref{prop:Fil_lifting}. Then one constructs a Higgs-de Rham flow $\mathrm{HDF}_{X_{n+1}}$ over $X_{n+1}$
 {\tiny{\begin{equation}\xymatrix@C=0mm{
 & (V_{X_{n+1}},\nabla_{X_{n+1}}, \mathrm{Fil}_{X_{n+1}})_0 \ar[dr]
 && (V_{X_{n+1}},\nabla_{X_{n+1}}, \mathrm{Fil}_{X_{n+1}})_{f-1} \ar[dr]
 && \cdots \\
 (E_{X_{n+1}},\theta_{X_{n+1}})_0 \ar[ur]
 && \cdots \ar[ur]
 && (E_{X_{n+1}},\theta_{X_{n+1}})_f \ar[ur] \\
 }\end{equation}}}
 satisfying
 \begin{itemize}
 \item with initial term $((E_{X_{n+1}},\theta_{X_{n+1}})_0,(V_{X_n},\nabla_{X_n}, \mathrm{Fil}_{X_n})_{f-1},\varphi_n)$;
 \item $\mathrm{HDF}_{X_{n+1}}\mid_{D_{n+1}} \simeq \mathrm{HDF}_{\mathcal{D}}\mid_{D_{n+1}}$.
 \end{itemize}

 For the last step, we need to show this lifted flow is again $f$-periodic. In other words, we need the following result.
 \begin{lemma}\label{lem:periodic_map} There exists an isomorphism
 \[\varphi_{X_{n+1}}\colon (E_{X_{n+1}},\theta_{X_{n+1}})_f \overset\sim\longrightarrow (E_{X_{n+1}},\theta_{X_{n+1}})_0\]
 which lifts $\varphi_{X_n}$ and satisfying $\varphi_{X_{n+1}}\mid_{D_{n+1}} = \varphi_{D_{n+1}}$.
 \end{lemma}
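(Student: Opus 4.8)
The plan is to run exactly the same inductive deformation argument that proves Proposition~\ref{prop:graded} and Proposition~\ref{prop:Fil_lifting}, with the isomorphism $f_{X_n}$ there replaced by the periodicity isomorphism $\varphi_{X_n}$ and with the target Higgs bundle being $(E_{X_{n+1}},\theta_{X_{n+1}})_0$. Set $(\mathcal E,\Theta):=\mathcal{H}\mathrm{om}\big((E_{X_1},\theta_{X_1})_f,(E_{X_1},\theta_{X_1})_0\big)$, a logarithmic Higgs bundle on $(X_1,S_1)$ of rank $N^2$, so that $(\mathcal E,\Theta)\mid_{D_1}\simeq \mathcal{H}\mathrm{om}\big((E_{D_1},\theta_{D_1})_f,(E_{D_1},\theta_{D_1})_0\big)$. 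Since $\mathrm{HDF}_{X_1}$ is $f$-periodic with graded semistable initial term of trivial Chern classes and $N\le p$, every term $(E_{X_1},\theta_{X_1})_i$ is (strongly) semistable with trivial $\mathbb Q_\ell$-Chern classes (Footnote~\ref{footnote:preperiodic}), and the same holds after restriction to $D_1$, where the flow agrees with $\mathrm{HDF}_{\D}\mid_{D_1}$. As $\dim(X_1)+N^2<p$ by assumption (1), Corollary~\ref{cor:lefschetz_semistable_higgs} applies to $(E_{X_1},\theta_{X_1})_f$ and $(E_{X_1},\theta_{X_1})_0$ with $r=r'=N$, giving an isomorphism
\[res\colon H^0_{\mathrm{Hig}}\big(X_1,(\mathcal E,\Theta)\big)\overset{\sim}{\longrightarrow} H^0_{\mathrm{Hig}}\big(D_1,(\mathcal E,\Theta)\mid_{D_1}\big)\]
and an injection $res\colon H^1_{\mathrm{Hig}}\big(X_1,(\mathcal E,\Theta)\big)\hookrightarrow H^1_{\mathrm{Hig}}\big(D_1,(\mathcal E,\Theta)\mid_{D_1}\big)$.

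Next I invoke the relevant deformation theory, exactly as in the proof of Proposition~\ref{prop:graded}. The obstruction to lifting the Higgs morphism $\varphi_{X_n}\colon (E_{X_n},\theta_{X_n})_f\to (E_{X_n},\theta_{X_n})_0$ to a Higgs morphism over $X_{n+1}$ is a class in $\mathbb H^1$ of the Higgs complex of $(\mathcal E,\Theta)$, i.e. in $H^1_{\mathrm{Hig}}\big(X_1,(\mathcal E,\Theta)\big)$; when this class vanishes the set of lifts is a torsor under $H^0_{\mathrm{Hig}}\big(X_1,(\mathcal E,\Theta)\big)$. The same holds over $D_{n+1}$, and formation of the obstruction class and the torsor action commute with restriction to $D$.

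Now carry out the lift. Let $c\in H^1_{\mathrm{Hig}}\big(X_1,(\mathcal E,\Theta)\big)$ be the obstruction to lifting $\varphi_{X_n}$. Using the identification $\mathrm{HDF}_{X_n}\mid_{D_n}\simeq \mathrm{HDF}_{\D}\mid_{D_n}$ we may regard $\varphi_{X_n}\mid_{D_n}$ as $\varphi_{D_n}$, which lifts to $\varphi_{D_{n+1}}$ inside the periodic flow $\mathrm{HDF}_{\D}$; hence $res(c)=0$, and by injectivity of $res$ we get $c=0$. So there is a Higgs morphism $\varphi'_{X_{n+1}}\colon (E_{X_{n+1}},\theta_{X_{n+1}})_f\to (E_{X_{n+1}},\theta_{X_{n+1}})_0$ lifting $\varphi_{X_n}$. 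In general $\varphi'_{X_{n+1}}\mid_{D_{n+1}}\neq\varphi_{D_{n+1}}$, but both lift $\varphi_{D_n}$, so their difference $c'\in H^0_{\mathrm{Hig}}\big(D_1,(\mathcal E,\Theta)\mid_{D_1}\big)$ is defined; modifying $\varphi'_{X_{n+1}}$ by $res^{-1}(c')\in H^0_{\mathrm{Hig}}\big(X_1,(\mathcal E,\Theta)\big)$ (which changes the reduction mod $p^n$ not at all) yields a Higgs morphism $\varphi_{X_{n+1}}$ lifting $\varphi_{X_n}$ with $\varphi_{X_{n+1}}\mid_{D_{n+1}}=\varphi_{D_{n+1}}$, since $res$ intertwines the two torsor structures. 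Finally $\varphi_{X_{n+1}}$ is an isomorphism: it is a morphism of locally free sheaves on $X_{n+1}$ whose reduction $\varphi_{X_1}$ is an isomorphism, so by Nakayama it is an isomorphism, and its inverse is automatically Higgs-linear. This closes the induction and shows $\mathrm{HDF}_{X_{n+1}}$ is $f$-periodic.

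I expect the only delicate point to be the bookkeeping in the second paragraph: verifying that the obstruction to lifting $\varphi_{X_n}$ as a morphism of \emph{Higgs} bundles (not merely of $\mathcal O$-modules) is genuinely the class in $H^1_{\mathrm{Hig}}$ of the $\mathcal{H}\mathrm{om}$-complex, and that this identification together with the torsor structure is functorial in restriction to $D$ — this is the exact analogue for $\mathrm{Hom}$-schemes of Theorem~\ref{thm:KYZ} for filtrations, and is used without further comment in the proof of Proposition~\ref{prop:graded}. Once this is granted, everything else is a formal consequence of Corollary~\ref{cor:lefschetz_semistable_higgs}.
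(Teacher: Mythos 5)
Your proof is correct and takes essentially the same approach as the paper: an $H^1$ obstruction/comparison class kills via the injectivity from Corollary~\ref{cor:lefschetz_semistable_higgs}, followed by an $H^0$ modification to match on $D$. The only cosmetic difference is that you phrase the $H^1$ class as the obstruction to lifting the morphism $\varphi_{X_n}$, whereas the paper phrases it (via Theorem~\ref{obs:Higgs}) as the torsor-difference between the two liftings $(E_{X_{n+1}},\theta_{X_{n+1}})_0$ and $(E_{X_{n+1}},\theta_{X_{n+1}})_f$ of $(E_{X_n},\theta_{X_n})_0$, identified through $\varphi_{X_n}$; these are the same class.
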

 To prove this result, we need another result on the obstruction class to lifting a Higgs bundle.

 \begin{theorem}\cite[Proposition 4.2]{KYZ} \label{obs:Higgs} Let $(E,\theta)$ be a logarithmic Higgs bundle over $X_n$. Denote $(\overline{E},\overline{\theta}) = (E,\theta)\mid_{X_1}$. Then
 \begin{enumerate}
 \item if $(E,\theta)$ has a lifting $(\widetilde{E},\widetilde{\theta})$ to $X_{n+1}$, then the lifting set is an $H^1_{\rm Hig}(X_1,\mathcal{E}\mathrm{nd}((\overline{E},\overline{\theta})))$-torsor;
 \item the infinitesimal automorphism group of $(\widetilde{E},\widetilde{\theta})$ over $(E,\theta)$ is $H^0_{\rm Hig}(X_1,\mathcal{E}\mathrm{nd}((\overline{E},\overline{\theta})))$.
 \end{enumerate}
 \end{theorem}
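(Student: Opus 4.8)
The plan is to establish both parts by the standard square-zero deformation calculus, carried out explicitly with \v{C}ech hypercocycles for the Higgs complex. The thickening in question is $X_n\hookrightarrow X_{n+1}$, whose ideal sheaf is $\mathcal J=p^n\mathcal O_{X_{n+1}}$; since $X$ is flat over $W$ and $n\geq 1$ this ideal is square-zero, and multiplication by $p^n$ gives canonical isomorphisms $\mathcal J\cong\mathcal O_{X_1}$ and $p^n\Omega^1_{X_{n+1}/W}(\log S)\cong\Omega^1_{X_1}(\log S_1)$, the latter because $\Omega^1_{X/W}(\log S)$ is locally free. Conceptually, deforming $(E,\theta)$ over this thickening is the same as deforming the associated module over $\mathrm{Sym}\,T_X(-\log S)$, whose relevant hyper-$\mathrm{Ext}$ groups are precisely the Higgs cohomology groups of $\mathrm{End}(\overline E,\overline\theta)$; I will make this concrete. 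Fix a finite affine open cover $\mathfrak U=\{U_\alpha\}$ of $X$ refined enough that each $E|_{U_\alpha}$ is free; then $\mathfrak U$ is acyclic for every (coherent) term of the complex $DR(\mathrm{End}(\overline E),\overline\theta^{\mathrm{End}})=\big[\mathrm{End}(\overline E)\xrightarrow{\overline\theta^{\mathrm{End}}}\mathrm{End}(\overline E)\otimes\Omega^1_{X_1}(\log S_1)\to\cdots\big]$, so its \v{C}ech hypercohomology on $\mathfrak U$ computes $H^\bullet_{\mathrm{Hig}}(X_1,\mathrm{End}(\overline E,\overline\theta))$.

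For part (2): an automorphism $\phi$ of $(\widetilde E,\widetilde\theta)$ reducing to the identity over $X_n$ is locally of the form $1+p^n h_\alpha$ with $h_\alpha\in\Gamma(U_\alpha,\mathrm{End}\,\overline E)$; by injectivity of $p^n$ on $\widetilde E$ the $h_\alpha$ agree on overlaps and glue to a global $h\in\Gamma(X_1,\mathrm{End}\,\overline E)$. Compatibility of $\phi$ with $\widetilde\theta$ reads, modulo $p^{n+1}$, exactly as $\overline\theta^{\mathrm{End}}(h)=[\overline\theta,h]=0$. Hence $\phi\mapsto h$ identifies the infinitesimal automorphism group with $\ker\big(\Gamma(X_1,\mathrm{End}\,\overline E)\xrightarrow{\overline\theta^{\mathrm{End}}}\Gamma(X_1,\mathrm{End}\,\overline E\otimes\Omega^1_{X_1}(\log S_1))\big)=\mathbb H^0\big(X_1,DR(\mathrm{End}(\overline E),\overline\theta^{\mathrm{End}})\big)=H^0_{\mathrm{Hig}}(X_1,\mathrm{End}(\overline E,\overline\theta))$.

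For part (1), fix the given lift $(\widetilde E,\widetilde\theta)$ as a base point and let $(\widetilde E',\widetilde\theta')$ be an arbitrary lift. Over each $U_\alpha$, both $\widetilde E|_{U_\alpha}$ and $\widetilde E'|_{U_\alpha}$ are lifts of the free module $E|_{U_\alpha}$, hence free, so there is an isomorphism $\psi_\alpha$ reducing to the identity over $X_n$; transporting $\widetilde\theta'$ by $\psi_\alpha$ and subtracting $\widetilde\theta$ yields $p^n\eta_\alpha$ with $\eta_\alpha\in\Gamma(U_\alpha,\mathrm{End}\,\overline E\otimes\Omega^1_{X_1}(\log S_1))$. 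Integrability of $\widetilde\theta$ and of the conjugate $\psi_\alpha^{-1}\widetilde\theta'\psi_\alpha$ forces $\overline\theta^{\mathrm{End}}(\eta_\alpha)=0$, so $\{\eta_\alpha\}$ consists of degree-$1$ cocycles of the Higgs complex; writing $\psi_\beta^{-1}\psi_\alpha=1+p^n g_{\alpha\beta}$ one checks $\{g_{\alpha\beta}\}$ is a \v{C}ech $1$-cochain with $\check\delta\{g_{\alpha\beta}\}=0$ and $\eta_\alpha-\eta_\beta=\overline\theta^{\mathrm{End}}(g_{\alpha\beta})$. Thus $(\{g_{\alpha\beta}\},\{\eta_\alpha\})$ is a total-degree-$1$ \v{C}ech hypercocycle and defines a class $c(\widetilde E',\widetilde\theta')\in H^1_{\mathrm{Hig}}(X_1,\mathrm{End}(\overline E,\overline\theta))$. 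One then verifies: (i) $c$ is independent of the choices $\psi_\alpha$ and of the cover; (ii) $c(\widetilde E',\widetilde\theta')=0$ iff $(\widetilde E',\widetilde\theta')\cong(\widetilde E,\widetilde\theta)$ over $X_n$; (iii) every class is realized, by gluing the local Higgs bundles $(\widetilde E|_{U_\alpha},\widetilde\theta|_{U_\alpha}+p^n\eta_\alpha)$ along the $1+p^n g_{\alpha\beta}$; and (iv) $c$ is additive under threefold comparison, so it intertwines the natural action of $H^1_{\mathrm{Hig}}$ on the set of lifts with translation. Together these say the set of isomorphism classes of lifts is an $H^1_{\mathrm{Hig}}(X_1,\mathrm{End}(\overline E,\overline\theta))$-torsor. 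Note that existence of a lift is not asserted here, so the $\mathbb H^2$-obstruction plays no role.

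The only delicate points are bookkeeping: propagating the $\Omega^1(\log S)$-twist correctly through the conjugation computations, and --- the one genuinely nonabelian input --- using the integrability constraints $\widetilde\theta\wedge\widetilde\theta=0$ (and its conjugate) to see that the comparison data $\eta_\alpha$ lie in the \emph{kernel} of $\overline\theta^{\mathrm{End}}$ on $2$-forms, i.e.\ are Higgs cocycles rather than arbitrary $\mathrm{End}\,\overline E\otimes\Omega^1$-valued forms. With the flatness identifications above in hand, these are the same manipulations as in the classical deformation theory of flat bundles, with the de Rham differential replaced throughout by $\overline\theta^{\mathrm{End}}=[\overline\theta,-]$.
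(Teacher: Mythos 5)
The paper offers no proof of this statement---it is quoted directly from \cite[Proposition 4.2]{KYZ}---so the only comparison available is with the standard square-zero deformation calculus, which is exactly what you carry out: the identification $p^n\mathcal O_{X_{n+1}}\cong\mathcal O_{X_1}$, local triviality of lifts over a trivializing affine cover, and the total-degree-one \v{C}ech hypercocycle $(\{g_{\alpha\beta}\},\{\eta_\alpha\})$ for the Higgs complex of $\mathrm{End}(\overline E,\overline\theta)$, with integrability supplying the cocycle condition $[\overline\theta,\eta_\alpha]=0$. Your argument is correct and is essentially the proof of the cited reference; the one phrase to tighten is ``injectivity of $p^n$ on $\widetilde E$'' (multiplication by $p^n$ annihilates $p\widetilde E$), since what you actually use is that $h\mapsto p^nh$ induces an isomorphism $\mathrm{End}(\overline E)\to p^n\mathrm{End}(\widetilde E)$ by $W$-flatness.
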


The following Proposition follows directly from \autoref {cor:lefschetz_semistable_higgs}.
 \begin{proposition}\label{prop:res_obs&tor_Higgs} The restriction induces
 \begin{enumerate}
 \item an isomorphism $res\colon H^0_{\rm Hig}(X_1,\mathcal{E}\mathrm{nd}(E_{X_1},\theta_{X_1})) \overset{\sim}\longrightarrow H^0_{\rm Hig}(D_1,\mathcal{E}\mathrm{nd}(E_{D_1},\theta_{D_1}))$, and
 \item an injection $res\colon H^1_{\rm Hig}(X_1,\mathcal{E}\mathrm{nd}(E_{X_1},\theta_{X_1})) \hookrightarrow H^1_{\rm Hig}(D_1,\mathcal{E}\mathrm{nd}(E_{D_1},\theta_{D_1}))$.
 \end{enumerate}
 \end{proposition}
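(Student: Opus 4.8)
The plan is to feed the endomorphism Higgs bundle $\mathrm{End}(E_{X_1},\theta_{X_1})$ into the Lefschetz machinery already established, namely Corollary~\ref{cor:lefschetz_semistable_higgs}. The mechanism is the tautological identification: for logarithmic Higgs bundles $(E,\theta),(E,\theta)'$ one has $\mathrm{Hom}\big((E,\theta),(E,\theta)'\big) = H^0_{\mathrm{Hig}}\big(\mathcal{H}\mathrm{om}((E,\theta),(E,\theta)')\big)$, since a morphism of Higgs bundles is exactly a global section of the Hom-bundle annihilated by the induced Higgs field, i.e.\ the degree-zero hypercohomology of the associated de Rham complex; and of course $\mathcal{H}\mathrm{om}((E,\theta),(E,\theta)) = \mathrm{End}(E,\theta)$ and $\mathrm{End}(E_{X_1},\theta_{X_1})\mid_{D_1} = \mathrm{End}(E_{D_1},\theta_{D_1})$, restriction commuting with $\mathcal{H}\mathrm{om}$.

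Concretely, I would apply Corollary~\ref{cor:lefschetz_semistable_higgs} with $Y = X_1$, $S = S_1$, $D = D_1$, and $(E,\theta) = (E,\theta)' = (E_{X_1},\theta_{X_1})$, so $r = r' = N$. Before invoking it I would check its hypotheses in the present situation: the pair $(X_1,S_1)$ lifts over $W_2(k)$ because it lifts over $W(k)$; $D_1\subset X_1$ is a smooth ample divisor meeting $S_1$ transversally with $\mathcal O(D_1-S_1)$ ample, by assumption (3) of Theorem~\ref{theorem:main}; the Higgs bundle $(E_{X_1},\theta_{X_1})$ is semistable with trivial $\mathbb{Q}_l$ Chern classes by Theorem~\ref{thm:langton} together with Section~\ref{sect:local_constancy}, and its restriction $(E_{D_1},\theta_{D_1})$ to $D_1$ is semistable by the discussion in Section~\ref{section:HDR_X1}; finally the numerical hypothesis $\dim(X_1) + rr' \leq p$ reads $\dim(X_1) + N^2 \leq p$, which follows from $N^2 < p - \dim(X)$. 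Then part (1) of the Corollary yields the isomorphism $H^0_{\mathrm{Hig}}(X_1,\mathrm{End}(E_{X_1},\theta_{X_1})) \xrightarrow{\sim} H^0_{\mathrm{Hig}}(D_1,\mathrm{End}(E_{D_1},\theta_{D_1}))$, and part (2) yields the injection on $H^1_{\mathrm{Hig}}(\mathrm{End})$.

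I do not expect a genuine obstacle here: the substantive work was already absorbed into Corollary~\ref{cor:lefschetz_semistable_higgs}, which in turn rests on Lemma~\ref{lem:CohEquiv} and Arapura's vanishing theorem (Theorem~\ref{thm:Ara}), including the sharpening of the hypothesis from $N\leq p$ (needed for $\mathrm{End}(E_{X_1},\theta_{X_1})$ and its restriction to be strongly semistable, via Footnote~\ref{footnote:preperiodic}) to $N^2 \leq p - \dim X$ (needed for the vanishing). The only thing requiring care is bookkeeping these hypotheses—in particular confirming that $\mathrm{End}(E_{X_1},\theta_{X_1})$ and $\mathrm{End}(E_{D_1},\theta_{D_1})$ are honestly semistable—but this verification is already contained in the proof of Corollary~\ref{cor:lefschetz_semistable_higgs}, so the argument for Proposition~\ref{prop:res_obs&tor_Higgs} is immediate.
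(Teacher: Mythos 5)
Your argument is exactly the paper's: the paper disposes of this proposition in one line by citing Corollary~\ref{cor:lefschetz_semistable_higgs} applied to $(E,\theta)=(E,\theta)'=(E_{X_1},\theta_{X_1})$, and you have simply spelled out the tautological identification $\mathrm{Hom}=H^0_{\mathrm{Hig}}(\mathcal{H}\mathrm{om})$ and verified the hypotheses. Correct, and no meaningful divergence from the paper's route.
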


 \begin{proof}[Proof of \autoref{lem:periodic_map}] We identify $(E_{X_{n}},\theta_{X_{n}})_0$ and $(E_{X_{n}},\theta_{X_{n}})_f$ via $\varphi_{X_n}$. Since both $(E_{X_{n+1}},\theta_{X_{n+1}})_0$ and $(E_{X_{n+1}},\theta_{X_{n+1}})_f$ lift $(E_{X_{n}},\theta_{X_{n}})_0$, they differ by an element \[c\in H^1_{\rm Hig}(X_1,\mathcal{E}\mathrm{nd}(E_{X_1},\theta_{X_1})).\]
 Since $\mathrm{HDF}_{X_{n+1}}\mid_{D_{n+1}}$ is $f$-periodic, one has $res(c) =0 \in H^1_{\rm Hig}(D_1,\mathcal{E}\mathrm{nd}(E_{D_1},\theta_{D_1}))$. By the injection of the restriction map in 2) of \autoref{prop:res_obs&tor_Higgs}, $c=0$ and there is an isomorphism
 \[\varphi_{X_{n+1}}'\colon (E_{X_{n+1}},\theta_{X_{n+1}})_f \longrightarrow (E_{X_{n+1}},\theta_{X_{n+1}})_0.\]
 In general $\varphi_{X_{n+1}}'\mid_{D_{n+1}}\neq \varphi_{D_{n+1}}$. We consider the difference
 \[c'\in H^0_{\rm Hig}(D_1,\mathcal{E}\mathrm{nd}(E_{D_1},\theta_{D_1}))\]
 between $\varphi_{X_{n+1}}'\mid_{D_{n+1}}$ and $\varphi_{D_{n+1}}$. Since the restriction map in 1) of \autoref{prop:res_obs&tor_Higgs} is a bijection, we have a unique preimage $res^{-1}(c')$ of $c'$. We obtain a new isomorphism
 \[\varphi_{X_{n+1}}\colon (E_{X_{n+1}},\theta_{X_{n+1}})_f \longrightarrow (E_{X_{n+1}},\theta_{X_{n+1}})_0.\]
 by modifying the lifting $\varphi_{X_{n+1}}'$ via $res^{-1}(c')$. Then $\varphi_{X_{n+1}}$ satisfies our required property, i.e. $\varphi_{X_{n+1}}\mid_{D_{n+1}}=\varphi_{D_{n+1}}$.
 \end{proof}

This concludes the proof of \autoref{prop_lift_HDF_W}, which in turn proves \autoref{theorem:main} and \autoref{theorem:main_proj} by following the five steps outlined at the very end of \autoref{section:introduction}.

\section{Applications}\label{applications}
In this section, we provide the aforementioned applications. The notation in this section differs somewhat from \autoref{section:LSZ}-\autoref{section:HDR_X}. We have done this because we want the applications to be self-contained. Therefore, the notation for each application is contained within the statement in \autoref{section:introduction}.

\begin{proof}[Proof of \autoref{corollary:abelian_mixed}]
It is clear that if the abelian scheme extends, then the local system extends. We must prove that if the local system extends, then the abelian scheme extends. By \autoref{theorem:main}, $\rho_X$ is crystalline. As crystalline representations have (locally) constant Hodge-Tate weights, the Hodge-Tate weights are in particular in $[0,1]$, i.e., $\rho_X$ is a crystalline representation whose associated Fontaine-Faltings module lives in $\mathcal{M}\mathcal{F}^{\nabla}_{[0,1]}(X)$.

There is a $p$-divisible group $G_X$ over $X$ whose $p$-adic Tate module over $X_K$ is isomorphic to $\rho_X$ by \cite[Theorem 7.1]{Fal89}.
We claim that a polarization on $A_D\rightarrow D$ naturally induces a quasi-polarization on $G_{X}$.
To see this, first of all: if $Y/W$ is a smooth scheme,
then the functor $\mathbb{D}\colon \mathcal{MF}^{\nabla}_{[0,1]}(Y)\rightarrow \mathrm{Rep}_{\pi_1^\et(Y_K)}(\mathbb{Z}_p)$ is fully faithful by \cite[Theorem 2.6]{Fal89}.
In our case, we have a surjection $\pi_1^\et(D_K)\twoheadrightarrow \pi_1^\et(X_K)$.
It follows that a skew-symmetric isogeny $\rho_D\rightarrow \rho_D^*$ extends to a skew-symmetric isogeny $\rho_X\rightarrow \rho_X^*$,
and therefore any skew-symmetric isogeny $G_D\rightarrow G_D^t$ extends to a skew-symmetric isogeny $G_X\rightarrow G_X^t$.
It follows that any polarization $\lambda_D$ on $A_D\rightarrow D$ naturally yields a quasi-polarization on $G_X$.

\cite[Corollary 8.6]{KP20} implies that, as there is an abelian scheme $A_{D_1}\rightarrow D_1$ whose $p$-divisible group extends to $X_1$, there exists an abelian scheme $A_{X_1}\rightarrow X_1$ with $A_{X_1}[p^{\infty}]\cong G_{X_1}$.\footnote{While the statement of Corollary 8.6 of \emph{loc. cit.} takes as input curves rather than divisors, we can easily put ourselves in that situation by simply intersecting $D_1$ with ample divisors until we end up with a smooth ample curve. Alternatively, the \emph{proof technique} of \cite{KP20} applies mutatis mutandis.} Moreover, by the proof of Corollary 8.6 of \emph{loc. cit.} any polarization $\lambda_{D_1}$ on $A_{D_1}$ extends to a polarization on $A_{X_1}$ after possibly multiplying by a power of $p$. We may therefore pick a polarization $\lambda_D$ such that the polarization $\lambda_{D_1}$ on $A_{D_1}\rightarrow D_1$ extends to a polarization $\lambda_{X_1}$ on $A_{X_1}\rightarrow X_1$.

By our choice of polarization, it follows that $G_X$ is a deformation of $A_{X_1}[p^{\infty}]$ as a \emph{quasi-polarized} $p$-divisible group. Then Serre-Tate theory \cite[Theorem 1.2.1]{Kat81} implies that there is a formal abelian scheme $\mathcal{A}_{\mathcal{X}} \rightarrow \mathcal{X}$ that is polarizable.\footnote{For a reference, see e.g. \cite[1.4.5.3, 1.4.5.4]{Con06}, but we briefly explain how this works. Serre-Tate theory will imply that there is an isogeny $\mathcal{A}_{\mathcal{X}} \rightarrow \mathcal{A}^t_{\mathcal{X}}$ extending the polarizing isogeny $A_{X_1}\rightarrow A^t_{X_1}$. By pulling back the Poincar\'e bundle, this induces a line bundle on $\mathcal{A}_{\mathcal{X}}$ restricting to a line bundle on $A_{X_1}$ algebraically equivalent to the original line bundle and is in particular algebraic. However, the ampleness of a line bundle is open, so the induced line bundle on $\mathcal{A}_{\mathcal{X}}$ is ample, as desired.} Grothendieck's algebraization theorem \cite[III. Th\'eor\`eme 5.4.5]{Gro68SGA3} then implies that $\mathcal{A}_{\mathcal{X}}$ uniquely algebraizes to an abelian scheme $A_X\rightarrow X$, as desired.
\end{proof}

\begin{proof}[Proof of \autoref{corollary:abelian_C}]
We first show the equivalence of the first two items. Let $X$, $D$, $f_D\colon A_D\rightarrow D$, and $(E_X,\theta_X)$ be given as in \autoref{corollary:abelian_C}. We use the following fact: for any finitely generated $\mathbb{Z}$ algebra $R$ that is an integral domain, there exist infinitely many primes $p$ such that $R$ embeds in $\mathbb{Z}_p$. This follows from Cassels' embedding theorem \cite{Cas76}. Indeed, pick a set of generators $t_i$ of $R$ over $\mathbb{Z}$. Then there exists infinitely many $p$ such that $\mathrm{Frac}(R)$ embeds in $\mathbb{Q}_p$ and moreover such that the image of each of the $t_i$ is in $\mathbb{Z}_p^{\times}$. This implies the result as the $t_i$ generated $R$ as a $\mathbb{Z}$ algebra. We note that one can extend $R\to \mathbb{C}$ to an embedding $\mathbb{Z}_p\to \mathbb{C}$. Let $S\subset \mathbb{Q}_p$ and $T\subset \mathbb{C}$ be transcendence bases over $\mathrm{Frac}(R)$. There exists an injection $S\to T$, which extends uniquely to an embedding of fields $\mathbb{Q}(S)\to \mathbb{Q}(T)\subset \mathbb{C}$. Since $\mathbb{Q}_p$ is algebraic over $\mathbb{Q}(S)$ and $\mathbb{C}$ is algebraically closed, this embedding extends to an embedding $\mathbb{Q}_p\to \mathbb{C}$.

By spreading-out and the above observation, it follows that there exists infinitely many primes $p$ such that the following holds.
\begin{itemize}
\item $X$, $D$, $f_D\colon A_D\rightarrow D$, and $(E_X,\theta_X)$ may all be defined over a copy of $\mathbb{Z}_p\subset \mathbb{C}$, and moreover $X$, $D$, and $f_D\colon A_D\rightarrow D$ are all smooth and projective over $\mathbb{Z}_p$.
\item The Higgs bundle $(E_D,\theta_D)$ is stable modulo $p$. (Stability is an open condition.)
\item The Higgs bundle $(E_D,\theta_D)$ is 1-periodic over $\mathbb{Z}_p$. (The Higgs bundle is the associated graded of de Rham cohomology of a smooth projective morphism over $D$.)
\item If $A_D\rightarrow D$ has relative dimension $g$, then $p>4g^2+\dim(X)$.
\end{itemize}

Now, by applying \autoref{theorem:lefschetz_higgs}, we deduce that $(E_X,\theta_X)$ is 1-periodic. It follows from \cite[Theorem 1.4]{LSZ13a} that the hypotheses of \autoref{corollary:abelian_mixed} are all satisfied. We deduce that $A_D\rightarrow D$ (over $\mathbb{Z}_p$) extends to an abelian scheme $A_X\rightarrow X$. The result follows.

To prove the second part, we simply repeat the spreading out argument above and note that $\pi_1(D)\rightarrow \pi_1(X)$ is an isomorphism by the classical Lefschetz theorem. Then \autoref{corollary:abelian_mixed} implies that $A_D\rightarrow D$ extends to an abelian scheme $A_X\rightarrow X$.
\end{proof}

To prove our final application, we first require several preliminaries.
\begin{definition}
Let $X/k$ be a smooth variety over a field $k$ of characteristic $0$, and suppose that $X$ admits a $k$-rational point. The category $\mathrm{MIC}(X)$ consists of objects $(V, \nabla)$, where $V$ is a vector bundle on $X$ and $\nabla$ is an integrable connection on $V$; the morphisms in this category are $\mathcal{O}_X$-linear maps that are horizontal with respect to the connections.
\end{definition}

In fact, the category $\mathrm{MIC}(X)$ is a $k$-linear Tannakian category, which is neutralized by the fiber at any $k$-point $x$ of $X$, written $\omega_x$ \cite[Section 2]{Esn12}. We set the \emph{algebraic fundamental group} to be the Tannakian group $$\mathrm{Aut}^{\otimes}(\omega_x)=:\pi_1^{\mathrm{alg}}(X).$$
We emphasize that this group does not satisfy base change, see e.g. \cite[p. 3-4]{Esn12}.

\begin{lemma}\label{Lemma:MIC_extension}
Let $X/k$ be a smooth projective variety of dimension at least 2 over a field of characteristic 0. Let $D\subset X$ be a smooth ample divisor. Then the natural functor
$$\mathrm{MIC}(X)\rightarrow \mathrm{MIC}(D)$$
is fully faithful, and when $\dim(X)\geq 3$ and $k=\overline{k}$, it is an equivalence of categories.
\end{lemma}

\begin{proof}
We first prove the Lemma in the case $k=\overline{k}$. This argument is essentially contained in \cite[Proposition 2.1]{Esn12}. More precisely, when $\dim(X)\geq 3$, one may directly apply \cite[Proposition 2.1]{Esn12}, which shows that $\pi_1^{\mathrm{alg}}(X_{\overline k})\rightarrow \pi_1^{\mathrm{alg}}(D_{\overline k})$ is an isomorphism. When $\dim(X)=2$, then one simply repeats the first part of the argument, replacing the use of the isomorphism furnished by \cite[Th\'eor\`eme 1.2(b)]{Gro70} with the surjection furnished by \cite[Th\'eor\`eme 1.2(a)]{Gro70} to prove that $\pi_1^{\mathrm{alg}}(X_{\overline k})\rightarrow \pi_1^{\mathrm{alg}}(D_{\overline k})$ is surjective.\footnote{Both of these go under the name ``Grothendieck-Malcev'' theorem, and they use the fact that the topological fundamental group of a smooth, quasi-projective variety is finitely generated.}

Let $k$ be a field of characteristic $0$ (not necessarily algebraically closed). For any two integrable connections $(V,\nabla)$ and $(V',\nabla')$ in $\mathrm{MIC}(X)$, we must show the $k$-linear morphism
\[\varphi\colon \mathrm{Hom}_{\mathrm{MIC(X)}}((V,\nabla),(V',\nabla')) \to \mathrm{Hom}_{\mathrm{MIC(D)}}((V,\nabla)\mid_{D},(V',\nabla')\mid_{D})\]
induced by restriction is an isomorphism. By base change to the algebraic closure $\overline{k}$, we have the canonical identifications:
\[\mathrm{Hom}_{\mathrm{MIC(X_{\overline{k}})}}((V,\nabla)_{\overline{k}},(V',\nabla')_{\overline{k}}) = \mathrm{Hom}_{\mathrm{MIC(X)}}((V,\nabla),(V',\nabla'))\otimes_k\overline{k}\]
\[\mathrm{Hom}_{\mathrm{MIC(D_{\overline{k}})}}((V,\nabla)_{\overline{k}}\mid_{D_{\overline{k}}},(V',\nabla')_{\overline{k}}\mid_{D_{\overline{k}}}) = \mathrm{Hom}_{\mathrm{MIC(D)}}((V,\nabla)\mid_{D},(V',\nabla')\mid_{D})\otimes_k\overline{k}.\]
Under these identifications, the restriction morphism over $\overline{k}$
\[\varphi_{\overline{k}}\colon \mathrm{Hom}_{\mathrm{MIC(X_{\overline{k}})}}((V,\nabla)_{\overline{k}},(V',\nabla')_{\overline{k}}) \to \mathrm{Hom}_{\mathrm{MIC(D_{\overline{k}})}}((V,\nabla)_{\overline{k}}\mid_{D_{\overline{k}}},(V',\nabla')_{\overline{k}}\mid_{D_{\overline{k}}})\]
corresponds to $\varphi\otimes_k\mathrm{id}_{\overline{k}}$. By algebraically closed case (already proven), $\varphi_{\overline{k}}$ is an isomorphism.
Since base change to $\overline{k}$ is a conservative functor, it follows that $\varphi$ is also an isomorphism.
\end{proof}

We recall a theorem about the existence of gr-semistable Griffiths-transverse filtration from \cite[Theorem 2.5]{Sim10} and \cite[Theorem A.4]{LSZ13a}
\begin{theorem}[Simpson, Lan-Sheng-Zuo] \label{thm_SimpsonFil}
Suppose $(V,\nabla)$ be a $\nabla$-semistable bundle over a smooth projective variety $Y$.
Then there exists a Griffiths transverse filtration $\mathrm{Fil}$ such that the graded Higgs module associated to $(V,\nabla,\mathrm{Fil})$ is semistable.
\end{theorem}

\begin{remark}
Simpson provided the construction only for curves, but it indeed applies to higher-dimensional varieties over arbitrary fields. This is precisely what Lan-Sheng-Zuo accomplished.
While their theorem specifies that the base field should be an algebraically closed field, this condition was not actually utilized in their proof.
The outline of the proof involves using the maximal destabilizing sub-object to modify a given Griffiths transverse filtration; this process is repeated iteratively.
Notably, the condition that the base field is algebraically closed is not required.
We call the filtration in the theorem the \emph{Simpson filtration}, which is, by construction, defined over the field of definition of the integrable connection; moreover, the construction is compatible with change of base field.
\end{remark}

\begin{proof}[Proof of \autoref{corollary:almost_all_p}]
We remind the reader that in this corollary, the notation is somewhat different from the rest of the paper.
In particular, $X/\mathcal{O}_K[1/\mathcal{N}]$ is a smooth projective scheme over a ring of integers in a number field, and $D\subset X$ is a relative smooth ample divisor.
By \autoref{Lemma:MIC_extension}, the integrable connection $(\mathcal{H}^i_\mathrm{dR}(Y_D/D),\nabla_{\mathrm{GM}})_{\bar K}$ extends to an integrable connection $(\mathcal{H}, \nabla)$ on $X_{\bar K}$.
However, integrable connections are finitary objects; hence after replacing $K$ by a finite extension, we know that the extension $(\mathcal{H}, \nabla)$ may be defined over $K$.
Moreover, by increasing $\mathcal{N}$, we can assume that $(\mathcal{H}, \nabla)$ may be defined over $\mathcal{O}_K[1/\mathcal{N}]$.
Pick an embedding $\iota\colon K\hookrightarrow \mathbb{C}$.
The integrable connection $(\mathcal{H},\nabla)_{\mathbb{C}}$ corresponds under Riemann-Hilbert, by construction (i.e., the proof of \cite[Proposition 2.1]{Esn12}), to the topological local system
\[\pi_1(X_{\mathbb{C}})\cong \pi_1(D_{\mathbb{C}})\rightarrow \mathrm{GL}_N(\mathbb{Z})\subset \mathrm{GL}_N(\mathbb{C}).\]
The local system factors through $ \mathrm{GL}_N(\mathbb{Z})$ because it originates from a family $f_D\colon Y_D\to D$ by taking relative Betti cohomology.
By \autoref{theorem:simpson}, the representation $\pi_1(X_{\mathbb{C}})\rightarrow \mathrm{GL}_N(\mathbb{C})$ underlies a $\mathbb{Z}$-PVHS; moreover, we claim that this $\mathbb{Z}$-PVHS extends the $\mathbb{Z}$-PVHS on $D_{\mathbb{C}}$.
This follows from the functoriality of the non-abelian Hodge correspondence.
In particular, there is an induced Griffiths transverse filtration $\mathrm{Fil}_{\mathbb{C}}$ on $(\mathcal{H},\nabla)_{\mathbb{C}}$, with
\[\mathrm{Fil}_{\mathbb{C}}\mid_{D_K} = \mathrm{Fil}_{\rm Hodge}\times_{K,\iota}\mathbb{C}.\]
To prove (1) and (4), we only need to show $\mathrm{Fil}_{\mathbb{C}}$ has a unique $K$-descent $\mathrm{Fil}$ such that $\mathrm{Fil}\mid_{D_K} = \mathrm{Fil}_{\rm Hodge}$.
Consider the associated graded Higgs bundle $$(E,\theta)_{\mathbb{C}}:=\mathrm{Gr}_{\mathrm{Fil}_{\mathbb{C}}}(\mathcal{H},\nabla)_{\mathbb{C}}$$ on $X_{\mathbb{C}}$.\footnote{It follows from \cite[p. 331]{Sim91} that under the complex Simpson correspondence, $(\mathcal{H},\nabla)_{\mathbb{C}}$ is simply sent to the associated graded Higgs bundle under the filtration described above.}
Since the representation is irreducible, by the classical Riemann-Hilbert correspondence, $(\mathcal{H},\nabla)_{\mathbb{C}}$ is stable; by the nonabelian Hodge theorem \cite{Sim92}, the associated Higgs bundle $(E,\theta)_{\mathbb{C}}$ is stable.

Denote by $\mathrm{Fil}'$ the Simpson filtration on $(\mathcal{H},\nabla)$ in \autoref{thm_SimpsonFil}, whose construction is compatible with change of base field. The Simpson filtration is, by construction, defined over the field of definition of the integrable connection; moreover, the construction is compatible with change of base field. The associated graded with respect to $\mathrm{Fil}'$ yields a semistable Higgs bundle $(E',\theta')_{\mathbb{C}}$. On the other hand, $(E,\theta)_{\mathbb{C}}$ is stable. By \cite[Lemma 7.1]{LSZ13a}\footnote{Lemma 7.1 in \cite{LSZ13a} is somewhat vague. This lemma can be restated as follows: Let there be two Griffiths transverse filtrations on $(V,\nabla)$, both of which are gr-semistable. If one of them is gr-stable, then they differ by a shift in index. This can be more precisely understood from its proof.}, $\mathrm{Fil}_{\mathbb{C}}$ and $\mathrm{Fil}'_{\mathbb{C}}$ differ by at most a shift of index. Since $\mathrm{Fil}'$ is defined over $K$, $\mathrm{Fil}_{\mathbb{C}}$ descends to $K$; we denote this descent by $\mathrm{Fil}$. This descent is precisely an index shift of $\mathrm{Fil}'$, which is then unique and independent of the choice of $\iota$.

We now have two filtrations $\mathrm{Fil}\mid_D$ and $\mathrm{Fil}_{\rm Hodge}$ on $(\mathcal{H}^i_\mathrm{dR}(Y_D/D),\nabla_{\mathrm{GM}})$. Given that
\[\mathrm{Fil}\mid_D \times_{K,\iota} \mathbb{C} = \mathrm{Fil}_{\mathbb{C}} \mid_{D_K} = \mathrm{Fil}_{\rm Hodge}\times_{K,\iota}\mathbb{C},\]
it follows that $\mathrm{Fil}\mid_D = \mathrm{Fil}_{\rm Hodge}$.

We come to (2) and (3). Let $\mathfrak p$ be an unramified prime ideal with sufficiently large residue characteristic. Denote by $\mathcal{O}_{\mathfrak p}$ the ring of integers in the $\mathfrak p$-adic field $K_{\mathfrak p}$. Then the $\mathfrak p$-adic completion $(\mathcal{H}_{\mathcal{O}_{\mathfrak p}},\nabla_{\mathcal{O}_{\mathfrak p}})\mid_{D_{\mathcal{O}_{\mathfrak p}}}$ of $(\mathcal{H}_{\rm dR}(Y_D/D),\nabla_{\rm GM})$ underlies a Fontaine-Faltings module. (This is because it comes from the cohomology of a smooth, proper family and $\mathfrak p\gg 0$.) In other words, there is a crystalline representation $\rho_{D,\mathfrak p}\colon \pi_1^\et(D_{K_{\mathfrak p}})\rightarrow \mathrm{GL}_N(\mathbb{Z}_p)$ with corresponding $1$-periodic Higgs-de Rham flow of form

\begin{equation}\label{equ: 11.1}
 \xymatrix{
 & (\mathcal{H}_{\mathcal{O}_{\mathfrak p}},\nabla_{\mathcal{O}_{\mathfrak p}},\mathrm{Fil})\mid_{D_{\mathcal{O}_{\mathfrak p}}} \ar[dr] & \\
 (E_{\mathcal{O}_{\mathfrak p}},\theta_{\mathcal{O}_{\mathfrak p}})_{D_{\mathcal{O}_{\mathfrak p}}} \ar[ur]
 && (E_{\mathcal{O}_{\mathfrak p}},\theta_{\mathcal{O}_{\mathfrak p}})_{D_{\mathcal{O}_{\mathfrak p}}} \ar@{=}[ll]\\
 }
\end{equation}
where $(E_{\mathcal{O}_{\mathfrak p}},\theta_{\mathcal{O}_{\mathfrak p}})$ is the $\mathfrak p$-adic completion of $(E,\theta)$. Since $(E,\theta)\mid_{D_K}$ is stable, and stability is a Zariski open condition in the moduli space, for all $\mathfrak p\gg 0$, the restriction $(E_{\mathcal{O}/{\mathfrak p}},\theta_{\mathcal{O}/{\mathfrak p}})\mid_{D_{\mathcal{O}/{\mathfrak p}}}$ is also stable. This implies that the representation $\rho_{D,\mathfrak p}$ is geometrically absolutely residually irreducible. The representation $\rho_{D,\mathfrak p}\colon\pi_1^\et(D_{K_{\mathfrak p}})\rightarrow \mathrm{GL}_N(\mathbb{Z}_p)$ extends to a representation $\rho_{X,\mathfrak p}\colon \pi_1^\et(X_{K_{\mathfrak p}})\rightarrow \mathrm{GL}_N(\mathbb{Z}_p)$ by the classical Lefschetz theorem; using our main \autoref{theorem:main}, one deduces that it is crystalline. Following the construction of this crystalline representation $\rho_{X,\mathfrak p}$ in the proof of our main theorem, there exists a $1$-periodic Higgs-de Rham flow over the $\mathfrak p$-adic completion $X_{\mathcal{O}_{\mathfrak p}}$
\begin{equation}\label{equ: 11.2}
 \xymatrix{
 & (\mathcal{H}'_{\mathcal{O}_{\mathfrak p}},\nabla'_{\mathcal{O}_{\mathfrak p}},\mathrm{Fil}'_{\mathcal{O}_{\mathfrak p}}) \ar[dr] & \\
 (E'_{\mathcal{O}_{\mathfrak p}},\theta'_{\mathcal{O}_{\mathfrak p}}) \ar[ur]
 && \mathrm{Gr}(\mathcal{H}'_{\mathcal{O}_{\mathfrak p}},\nabla'_{\mathcal{O}_{\mathfrak p}},\mathrm{Fil}'_{\mathcal{O}_{\mathfrak p}}) \ar@/^10pt/[ll]^{\simeq}\\
 }
\end{equation}
such that the its restriction on $D_{\mathcal{O}_{\mathfrak p}}$ is equal to that in \eqref{equ: 11.1}.
In particular, one has $(E'_{\mathcal{O}_{\mathfrak p}},\theta'_{\mathcal{O}_{\mathfrak p}})\mid_{D_{K_{\mathfrak p}}}\cong (E_{\mathcal{O}_{\mathfrak p}},\theta_{\mathcal{O}_{\mathfrak p}})\mid_{D_{K_{\mathfrak p}}}$.

We have constructed two Higgs bundles over the $p$-adic formal scheme $X_{\mathcal{O}_{\mathfrak p}}$: $(E_{\mathcal{O}_{\mathfrak p}},\theta_{\mathcal{O}_{\mathfrak p}})$ and $(E'_{\mathcal{O}_{\mathfrak p}},\theta'_{\mathcal{O}_{\mathfrak p}})$.
(The former was constructed to live over a ring of integers in $K$, the latter was constructed via the main theorem of this article and \`a priori only lives on the $p$-adic formal scheme.)
To prove (3), we only need to show that these two Higgs bundles are isomorphic.
Since the graded Higgs bundle $(E,\theta)_{\mathbb{C}}$ originates from a PVHS over a smooth projective variety, its Chern classes vanish. Similarly, $(E'_{\mathcal{O}_{\mathfrak p}},\theta'_{\mathcal{O}_{\mathfrak p}})$ being periodic implies trivial Chern classes by \autoref{footnote:preperiodic}.
Both $(E,\theta)_{\mathcal{O}_{\mathfrak p}}\mid_{X_{k_{\mathfrak p}}}$
and $(E'_{\mathcal{O}_{\mathfrak p}},\theta'_{\mathcal{O}_{\mathfrak p}})\mid_{X_{k_{\mathfrak p}}}$
possess trivial Chern classes and extend the same Higgs bundle $(E_{\mathcal{O}_{\mathfrak p}},\theta_{\mathcal{O}_{\mathfrak p}})\mid_{D_{k_{\mathfrak p}}}$ over the special fiber
$D_{k_{\mathfrak p}}$ of $D$.
By \autoref{cor:lefschetz_semistable_higgs}, we conclude that
\[(E,\theta)_{\mathcal{O}_{\mathfrak p}}\mid_{X_{k_{\mathfrak p}}} \cong (E'_{\mathcal{O}_{\mathfrak p}},\theta'_{\mathcal{O}_{\mathfrak p}})\mid_{X_{k_{\mathfrak p}}}.\]
We prove $(E,\theta)_{\mathcal{O}_{\mathfrak p}}\mid_{X_{\mathcal{O}_{\mathfrak p}/\mathfrak p^n}} \cong (E'_{\mathcal{O}_{\mathfrak p}},\theta'_{\mathcal{O}_{\mathfrak p}})\mid_{X_{\mathcal{O}_{\mathfrak p}/\mathfrak p^{n}}}$ inductively on $n$. Suppose
\[(E,\theta)_{\mathcal{O}_{\mathfrak p}}\mid_{X_{\mathcal{O}_{\mathfrak p}/\mathfrak p^n}} \cong (E'_{\mathcal{O}_{\mathfrak p}},\theta'_{\mathcal{O}_{\mathfrak p}})\mid_{X_{\mathcal{O}_{\mathfrak p}/\mathfrak p^{n}}}\]
holds true for some $n\geq 1$. We identify these two Higgs bundles via the isomorphism.
By \autoref{obs:Higgs}, the difference between $(E,\theta)_{\mathcal{O}_{\mathfrak p}}\mid_{X_{\mathcal{O}_{\mathfrak p}/\mathfrak p^{n+1}}}$ and $(E'_{\mathcal{O}_{\mathfrak p}},\theta'_{\mathcal{O}_{\mathfrak p}})\mid_{X_{\mathcal{O}_{\mathfrak p}/\mathfrak p^{n+1}}}$ corresponds to a cohomology class we may denote by $\epsilon_n \in H^1_{\rm Hig}(X_{K_{\mathfrak p}},\mathcal{E}\mathrm{nd}((E,\theta)_{\mathcal{O}_{\mathfrak p}}\mid_{X_{k_{\mathfrak p}}})).$ Since
\[(E,\theta)_{\mathcal{O}_{\mathfrak p}}\mid_{D_{\mathcal{O}_{\mathfrak p}/\mathfrak p^{n+1}}} \cong (E'_{\mathcal{O}_{\mathfrak p}},\theta'_{\mathcal{O}_{\mathfrak p}})\mid_{D_{\mathcal{O}_{\mathfrak p}/\mathfrak p^{n+1}}},\]
one has $res(\epsilon_n)=0$. By \autoref{prop:res_obs&tor_Higgs}, $\epsilon_n=0$. Thus
\[(E,\theta)_{\mathcal{O}_{\mathfrak p}}\mid_{X_{\mathcal{O}_{\mathfrak p}/\mathfrak p^{n+1}}} \cong (E'_{\mathcal{O}_{\mathfrak p}},\theta'_{\mathcal{O}_{\mathfrak p}})\mid_{X_{\mathcal{O}_{\mathfrak p}/\mathfrak p^{n+1}}}.\]
Then taking projective limits, one obtains $(E,\theta)_{\mathcal{O}_{\mathfrak p}}\cong (E'_{\mathcal{O}_{\mathfrak p}},\theta'_{\mathcal{O}_{\mathfrak p}})$. This verifies (3).

Finally, to prove (2), we only need to show the filtered de Rham bundle $(\mathcal{H}'_{\mathcal{O}_{\mathfrak p}},\nabla'_{\mathcal{O}_{\mathfrak p}},\mathrm{Fil}'_{\mathcal{O}_{\mathfrak p}})$ in the flow is isomorphic to the $\mathfrak p$-adic completion $(\mathcal{H}_{\mathcal{O}_{\mathfrak p}},\nabla_{\mathcal{O}_{\mathfrak p}},\mathrm{Fil}_{\mathcal{O}_{\mathfrak p}})$ of $(\mathcal{H},\nabla,\mathrm{Fil})$.
By (3), we may identify the two Higgs bundles $(E'_{\mathcal{O}_{\mathfrak p}},\theta'_{\mathcal{O}_{\mathfrak p}})\mid_{D_{\mathcal{O}_{\mathfrak p}}}$ and $(E_{\mathcal{O}_{\mathfrak p}},\theta_{\mathcal{O}_{\mathfrak p}})\mid_{D_{\mathcal{O}_{\mathfrak p}}}$.
This Higgs bundle is 1-periodic and stable modulo $p$.
Thus up to an isomorphism, it initiates a \emph{unique} 1-periodic Higgs-de Rham flow by \autoref{lem_uniqueHDF}.
So the restriction of the flow in \eqref{equ: 11.2} onto $D_{K_{\mathfrak p}}$ is isomorphic to the flow in \eqref{equ: 11.1}.
In particular, one has an isomorphism $\tau_{D_{\mathcal{O}_{\mathfrak p}}}\colon (\mathcal{H}'_{\mathcal{O}_{\mathfrak p}},\nabla'_{\mathcal{O}_{\mathfrak p}},\mathrm{Fil}'_{\mathcal{O}_{\mathfrak p}})\mid_{D_{\mathcal{O}_{\mathfrak p}}} \cong (\mathcal{H}_{\mathcal{O}_{\mathfrak p}},\nabla_{\mathcal{O}_{\mathfrak p}},\mathrm{Fil}_{\mathcal{O}_{\mathfrak p}})\mid_{D_{\mathcal{O}_{\mathfrak p}}}$ as filtered de Rham bundles over $D_{\mathcal{O}_{\mathfrak p}}$.

The reductions modulo $\mathfrak p$ of both $(\mathcal{H}'_{\mathcal{O}_{\mathfrak p}},\nabla'_{\mathcal{O}_{\mathfrak p}})$ and $(\mathcal{H}_{\mathcal{O}_{\mathfrak p}},\nabla_{\mathcal{O}_{\mathfrak p}})$ are stable integrable connections.
By \autoref{Lemma:MIC_extension}, we may extend $\tau_{D_{\mathcal{O}_{\mathfrak p}}}$ to an isomorphism
\begin{equation}
\tau_{X_{K_{\mathfrak p}}}\colon
 (\mathcal{H}'_{\mathcal{O}_{\mathfrak p}},\nabla'_{\mathcal{O}_{\mathfrak p}})\mid_{X_{K_{\mathfrak p}}}
\cong
 (\mathcal{H}_{\mathcal{O}_{\mathfrak p}},\nabla_{\mathcal{O}_{\mathfrak p}})\mid_{X_{K_{\mathfrak p}}},
\end{equation}
Again using \cite[Lemma 7.1]{LSZ13a}, over $X_{K_{\mathfrak p}}$, the filtrations $\mathrm{Fil}$ and $\mathrm{Fil}'$ only differ by a shift under $\tau_{X_{K_{\mathfrak p}}}$ because their associated graded Higgs bundles are stable.
As $\tau_{D_{\mathcal{O}_{\mathfrak p}}}$ preserves the filtrations, it follows that $\mathrm{Fil}$ and $\mathrm{Fil}'$ coincide (i.e., the difference-shift in the filtrations is zero), i.e. $\tau_{X_{K_{\mathfrak p}}}$ preserves the filtrations.

Now, the modulo $p$ reductions of both $(\mathcal{H}'_{\mathcal{O}_{\mathfrak p}},\nabla'_{\mathcal{O}_{\mathfrak p}})$ and $(\mathcal{H}_{\mathcal{O}_{\mathfrak p}},\nabla_{\mathcal{O}_{\mathfrak p}})$ are stable and the two integrable connections are isomorphic over $X_{K_{\mathfrak p}}$.
It then follows from Langer's Langton-style theorem \cite[Theorem 5.2]{Lan14} that $\tau_{X_{K_{\mathfrak p}}}$ extends to an isomorphism
\[\tau_{X_{\mathcal{O}_{\mathfrak p}}}\colon (\mathcal{H}'_{\mathcal{O}_{\mathfrak p}},\nabla'_{\mathcal{O}_{\mathfrak p}})\cong p^r(\mathcal{H}_{\mathcal{O}_{\mathfrak p}},\nabla_{\mathcal{O}_{\mathfrak p}})\]
for some $r\in \mathbb{Z}$.
As $\tau_D$ is an isomorphism over $D$, it follows that $r=0$, i.e., that $\tau_{X_{\mathcal{O}_{\mathfrak p}}}\colon (\mathcal{H}',\nabla')_{\mathcal{O}_{\mathfrak p}}\cong (\mathcal{H}_{\mathcal{O}_{\mathfrak p}},\nabla_{\mathcal{O}_{\mathfrak p}})$.
We may once again apply \cite[Proposition 7.5]{LSZ13a} to deduce that $\tau_{X_{\mathcal{O}_{\mathfrak p}}}(\mathrm{Fil}'_{\mathcal{O}_{\mathfrak p}})=\mathrm{Fil}_{\mathcal{O}_{\mathfrak p}}$.
Thus $\tau_{X_{\mathcal{O}_{\mathfrak p}}}$ is the desired isomorphism.
\end{proof}

\appendix
\section{Crystalline representations}\label{section FFM}
In this appendix, we review the theory of crystalline local systems. We follow closely (both in notation and in exposition) the foundational work of Faltings \cite{Fal89}.
For another review of this material, see \cite[2.6]{lovering}.
For a longer treatise with all of the details, see the recent \cite[Sections 2-4]{Tsu20}. (This source proves in detail many of the necessary statements in commutative algebra to set up the period rings. It also reproves several of Faltings' theorems using the recent $\mathbb{A}_{\mathrm{inf}}$ technology.)

\begin{definition}\label{def_Hodgefiltration}
Let $S$ be a locally Noetherian scheme, $Y\to S$ a smooth morphism, and $Z\subset Y$ an $S$-relative normal crossing divisor in $Y$. Denote by $\Omega^1_{Y/S}(\log Z)$ the sheaf of relative logarithmic differentials of $(Y,Z)$ over $S$.
\begin{enumerate}
 \item Recall that an \emph{integrable log-connection} on a coherent sheaf $V$ over $(Y,Z)/S$ is an $\mathcal{O}_S$-linear map
 \[\nabla\colon V\to V\otimes \Omega_{Y/S}(\log Z)\]
 satisfying the Leibniz rule and integrability condition $\nabla\circ \nabla=0$. In this case, the pair $(V,\nabla)$ is called an \emph{integrable log-connection}, or a \emph{logarithmic de Rham module/sheaf} over $(Y,Z)/S$. If $V$ is a vector bundle, it is also called a \emph{logarithmic de Rham bundle}. When $Z=\emptyset$, these are simply termed an \emph{integrable connection} or \emph{de Rham module/sheaf/bundle} over $Y/S$.
\item Given integers $a\leq b$, a \emph{Hodge filtration of level $[a,b]$} on an integrable (log-)connection $(V,\nabla)$ is a decreasing filtration $\mathrm{Fil}$ such that each $\mathrm{Fil}^i V$ is a locally split subsheaf of $V$; the filtration satisfies the Griffiths transversality condition:
 \[\nabla(\mathrm{Fil}^iV)\subset \mathrm{Fil}^{i-1}V\otimes \Omega^1_{Y/S}(\log Z),\]
 and the filtration is exhaustive and separated,
 with
 \[V=\mathrm{Fil}^aV\supset \mathrm{Fil}^{a+1}V \supset\cdots \supset \mathrm{Fil}^bV\supset \mathrm{Fil}^{b+1}V=0.\]
 \item A \emph{filtered (logarithmic) de Rham module/sheaf/bundle} is a triple $(V,\nabla,\mathrm{Fil})$ consisting of an integrable (log-)connection $(V,\nabla)$ and a Hodge filtration $\mathrm{Fil}$ on it.
\end{enumerate}
 These definitions extend straightforwardly to smooth formal schemes and rigid analytic spaces; we leave the details to the reader.
\end{definition}

\begin{setup}\label{setup:scheme_rigid1} Let $Y$ be a smooth scheme over $W$ (not necessary projective) with geometrically connected generic fiber. Denote by $\mathcal{Y}$ the $p$-adic formal completion of $Y$ along the special fiber $Y_1$ and by $\mathcal{Y}_K$ the rigid-analytic space associated to $\mathcal{Y}$, which is an open subset of $Y_K^{\rm an}$.
\end{setup}

\subsection{Fontaine-Faltings modules over a small affine base.}\label{sect:FF/small}
We first recall the notion of a Fontaine-Faltings module over a small affine scheme.
Assume $Y$ is a connected, small affine scheme, i.e., $Y$ is connected and if $Y=\mathrm{Spec}(R)$, then there exists an \'etale map \[W[T_1^{\pm1},T_2^{\pm1},\cdots, T_{d}^{\pm1}]\rightarrow R,\] over $W$
(see \cite[p. 27]{Fal89}). In general, a smooth affine scheme over $W$ is not always small but it can be covered by a system of small affine open subsets. By the existence of the \'etale chart there exists some $\Phi:\widehat{R}\rightarrow\widehat{R}$ which lifts the absolute Frobenius on $R/pR$, where $\widehat{R}$ is the $p$-adic completion of $R$.

A \emph{Fontaine-Faltings module} over the $p$-adic formal completion $\mathcal{Y}=\mathrm{Spf}(\widehat{R})$ of $Y$ with Hodge-Tate weights in $[a,b]$ is a quadruple $(V,\nabla,\mathrm{Fil},\varphi)$, where
\begin{itemize}
 \item[-] $(V,\nabla)$ is a de Rham $\widehat{R}$-module;
 \item[-] $\mathrm{Fil}$ is a Hodge filtration on $(V,\nabla)$ of level in $[a,b]$;
 \item[-] $\widetilde{V}$ is the quotient $\bigoplus\limits_{i=a}^b\mathrm{Fil}^i/\sim$ with $px\sim y$ for $x\in\mathrm{Fil}^iV$ with $y$ being the image of $x$ under the natural inclusion $\mathrm{Fil}^iV\hookrightarrow\mathrm{Fil}^{i-1}V$;
 \item[-] $\varphi$ is an $\widehat{R}$-linear isomorphism \[\varphi:\widetilde{V}\otimes_{\Phi}\widehat{R} \longrightarrow V,\]
 \item[-] The relative Frobenius $\varphi$ is horizontal with respect to the connections.
\end{itemize}
(The fact that $\varphi$ is an isomorphism is sometimes known as \emph{strong $p$-divisibility.}) A morphism between Fontaine-Faltings modules is a morphism between the underlying de Rham bundles which is strict for the filtrations and commutes with the $\varphi$-structures. Denote by $\mathcal{MF}_{[a,b]}^{\nabla,\Phi}(\mathcal{Y}/W)$ the category of Fontaine-Faltings modules over $\mathcal{Y}$ with Hodge-Tate weights in $[a,b]$. The $p$-primary torsion version of this definition was first written down in \cite[p. 30-31]{Fal89}; here we follow \cite[Section 3]{Fal99}, see also \cite[Section 2]{SYZ21} and \cite[Section 2]{LSZ13a}.

\paragraph{\emph{The gluing functor.}} In the following, we recall the gluing functor of Faltings. In other words, up to a canonical equivalence of categories, if $b-a\leq p-2$, the category $\mathcal{MF}_{[a,b]}^{\nabla,\Phi}(\mathcal{Y}/W)$ does not depend on the choice of $\Phi$. More explicitly, the functor yielding an equivalence is given as follows. Let $\Psi$ be another lifting of the absolute Frobenius. For any filtered de Rham module $(V,\nabla,\mathrm{Fil})$, Faltings~\cite[Theorem~2.3]{Fal89} shows that there is a canonical isomorphism by the Taylor formula
\[\alpha_{\Phi,\Psi}: \widetilde{V}\otimes_\Phi\widehat{R} \simeq \widetilde{V}\otimes_\Psi \widehat{R},\]
which is parallel with respect to the connection, satisfies the cocycle conditions and induces an equivalence of categories
\begin{equation}
 \xymatrix@R=0mm{ \mathcal{MF}_{[a,b]}^{\nabla,\Psi}(\mathcal{Y}/W)\ar[r] & \mathcal{MF}_{[a,b]}^{\nabla,\Phi}(\mathcal{Y}/W).\\
 (V,\nabla,\mathrm{Fil},\varphi)\ar@{|->}[r] & (V,\nabla,\mathrm{Fil},\varphi\circ\alpha_{\Phi,\Psi})\\}
\end{equation}

\subsection{Fontaine-Faltings modules over a global base.} \label{sect:FF/gl}
In this section, we do not assume $Y$ is small, but we maintain the assumption that $Y$ has geometrically connected generic fiber. Let $I$ be the index set of all pairs $(\mathcal{U}_i,\Phi_i)$, where $\mathcal{U}_i$ is a connected small affine open subset of $\mathcal{Y}$, and $\Phi_i$ is a lift of the absolute Frobenius on $\mathcal{O}_{\mathcal{Y}}(\mathcal{U}_i)\otimes_W k$. Clearly, $\{\mathcal{U}_i\}_{i\in I}$ is a small affine covering of $\mathcal{Y}$. Recall that the category $\mathcal{MF}_{[a,b]}^{\nabla}(\mathcal{Y}/W)$ is constructed by gluing the categories $\mathcal{MF}_{[a,b]}^{\nabla,\Phi_i}(\mathcal{U}_i/W)$. Actually $\mathcal{MF}_{[a,b]}^{\nabla}(\mathcal{Y}/W)$ can be described more precisely as follows. A Fontaine-Faltings module over $\mathcal{Y}$ of Hodge-Tate weights in $[a,b]$ is a tuple $(V,\nabla,\mathrm{Fil},\{\varphi_i\}_{i\in I})$, i.e. a filtered de Rham sheaf $(V,\nabla,\mathrm{Fil})$ over $\mathcal{Y}$ together with $\varphi_i: \widetilde{V}(\mathcal{U}_i)\otimes_{\Phi_i} \widehat{\mathcal{O}_{\mathcal{Y}}(\mathcal{U}_i)}\rightarrow V(\mathcal{U}_i)$ such that
\begin{itemize}
 \item[-] $M_i:=(V(\mathcal{U}_i),\nabla,\mathrm{Fil},\varphi_i)\in \mathcal{MF}_{[a,b]}^{\nabla,\Phi_i}(\mathcal{U}_i/W)$.
 \item[-] For all $i,j\in I$, on the open intersection $\mathcal{U}_i\cap \mathcal{U}_j$, the Fontaine-Faltings modules $M_i\mid_{\mathcal{U}_{i}\cap \mathcal{U}_j}$ and $M_j\mid_{\mathcal{U}_{i}\cap \mathcal{U}_j}$ are associated to each other under the above equivalence of categories with respect to the two Frobenius liftings $\Phi_i$ and $\Phi_j$ on $\mathcal{U}_i\cap \mathcal{U}_j$.
\end{itemize}
Denote by $\mathcal{MF}_{[a,b]}^{\nabla}(\mathcal{Y}/W)$ the category of all Fontaine-Faltings modules over $\mathcal{Y}$ of Hodge-Tate weights in $[a,b]$.

Faltings constructed a fully faithful contravariant functor from the category of Fontaine-Faltings modules to the category of local systems.
\begin{theorem}[Faltings {\cite[Theorem 2.6*]{Fal89}}] Setup as in \autoref{setup:scheme_rigid1}. Suppose $b-a\leq p-2$. Then there is a fully faithful functor:
\begin{equation*}
 \mathbb{D}\colon \mathcal{MF}_{[a,b]}^{\nabla}(\mathcal{Y}/W)\rightarrow \mathrm{Loc}_{\mathbb{Z}_p}(\mathcal{Y}_K).
\end{equation*}
The essential image of the functor $\mathbb{D}$ is closed under sub-objects and quotients. Local systems in the essential image are called crystalline representations.
\end{theorem}

\begin{remark}
 If moreover $Y$ is proper, then by rigid GAGA, the local systems over $\mathcal{Y}_K$ are algebraic, i.e., one obtains $\mathbb{Z}_p$-local systems on $Y_K$. This has the following upshot: if $b-a\leq p-2$, there is a functor, by abusing notation we still denote it by $\mathbb{D}$,
 \begin{equation}
 \mathbb{D}\colon \mathcal{MF}_{[a,b]}^{\nabla}(\mathcal{Y}/W)\rightarrow \mathrm{Loc}_{\mathbb{Z}_p}(Y_K),
 \end{equation}
 from the category of Fontaine-Faltings modules (with $b-a\leq p-2$) to the category of finite dimensional lisse $\mathbb{Z}_p$ sheaves on $Y_K$. Since $Y_K$ is connected, picking a base point, this is equivalent to the category $\mathrm{Rep}_{\mathbb{Z}_p}(\pi_1^\et(Y_K))$ of continuous finite free $\mathbb{Z}_p$-representations of $\pi_1^\et(Y_K)$. It is a fundamental result of Faltings that this functor is fully faithful; a lisse $\mathbb{Z}_p$ sheaf in the essential image of $\mathbb{D}$ is called \emph{crystalline (with Hodge-Tate weights in $[a,b]$).}
\end{remark}

\section{Logarithmic crystalline representations}\label{section:log_FFM}

\subsection{Logarithmic Fontaine-Faltings modules}
Faltings claimed in \cite[i) p.43]{Fal89} that the theory of crystalline representations extends to the logarithmic context. However, it seems as though the details of this construction have never appeared in the literature. In this appendix, we carefully formulate the category of logarithmic Fontaine-Faltings modules in the local setting. (One may also find the definition of a logarithmic Fontaine-Faltings module in \cite{LSZ13a}.) We then formulate two conjectures: one which essentially says that Faltings' construction indeed extends to the logarithmic setting, and a second asking for compatibility with the logarithmic Riemann-Hilbert functor constructed in \cite{DLLZ}.

\begin{setup}\label{setup:log_FF}Let $Y/W$ be a smooth scheme (not necessarily proper), and let $Z\subset Y$ be a $W$-flat simple relative normal crossings divisor. Set $U:=Y\setminus Z$. Construct spaces $Z_K$, $\mathcal{Z}$, $\mathcal{Z}_K$, $U_K$, $\mathcal{U}$ and $\mathcal{U}_K$ exactly analogously to those for $Y$ in \autoref{setup:scheme_rigid1}. Denote $\mathcal{Y}^\circ_K:=\mathcal{Y}_K-\mathcal{Z}_K$.
\end{setup}
We will first construct the category of logarithmic Fontaine-Faltings modules under the following assumptions.
\begin{setup}\label{setup:log_FF_affine}
Let $Y=\mathrm{Spec}R$ be an affine $W$-scheme with an \'etale map $$W[T_1,T_2,\cdots, T_{d}]\rightarrow R,$$ over $W$, let $Z$ be the divisor in $Y$ defined by $T_1\cdots T_r=0$, for some $r\leq d$. Therefore, $U$ is a small affine scheme. In this context, we say that $(Y,Z)$ is \emph{logarithmically small}.
Denote by $\widehat{R}$ the $p$-adic completion of $R$, so $\mathcal{Y}=\mathrm{Spf}(\widehat{R})$. Denote by $\Phi:\widehat{R}\rightarrow\widehat{R}$ a lifting of the absolute Frobenius on $R/pR$ such that $\Phi(T_i) = w_iT_i^p$ for some $w_i\in \widehat{R}^\times$.\footnote{This means that the lifting $\Phi$ is compatible with the logarithmic structure. For example, one can take $w_i=1$ for all $i=1,2,\cdots,d$. We note that since $\Phi$ lifts the absolute Frobenius, $w_i\equiv 1\pmod{p}$. One can therefore write $w_i=1+pu_i$ for some $u_i\in \widehat{R}$.}
\end{setup}

Before giving the definition of logarithmic Fontaine-Faltings module, we will first introduce two objects associated to a a finitely generated filtered logarithmic de Rham sheaf $(V,\nabla,\mathrm{Fil})$ over $\mathcal{Y}$ with logarithmic poles along $\mathcal{Z}$:
\begin{itemize}
\item a logarithmic $p$-connection, $(\widetilde{V},\widetilde{\nabla})$; and
\item a logarithmic connection $(\widetilde{V}\otimes_\Phi \widehat{R},\Phi_*(\widetilde{\nabla}))$.
\end{itemize}
The module $\widetilde{V}$ is defined in the same way as in the case without logarithmic poles, i.e., it is the quotient $\bigoplus\limits_{i=a}^b\mathrm{Fil}^iV/\sim$ with $px\sim y$ for $x\in\mathrm{Fil}^iV$ with $y$ being the image of $x$ under the natural inclusion $\mathrm{Fil}^iV\hookrightarrow\mathrm{Fil}^{i-1}V$; In other words, if we denote by $[v]_i$ the image of $v\in \mathrm{Fil}^iV$ in $\widetilde{V}$ under the natural morphism $\mathrm{Fil}^iV\rightarrow \widetilde{V}$, then $[v]_{i-1} = p\cdot [v]_i$. We now construct the $p$-connection.

Consider the composition
\[\widetilde{\nabla}_i \colon V_i\xrightarrow{\nabla}V_{i-1}\otimes\Omega^1_{\widehat{R}}(\log Z) \xrightarrow{[\cdot]_{i-1}\otimes \mathrm{id}} \widetilde{V}\otimes\Omega^1_{\widehat{R}}(\log Z).\]
Since $[\cdot]_{i-1}=p[\cdot]_i$, the following diagram commutes
\[\xymatrix{& V_i \ar[rd]^{p} \ar@{^(->}[ld] & \\
V_{i-1} \ar[rd]^{\widetilde{\nabla}_{i-1}} && V_{i} \ar[ld]_{\widetilde{\nabla}_{i}}\\
& \widetilde{V}\otimes \Omega^1_{\widehat{R}}(\log Z)}\]
Applying the universal property of direct limits for $\widetilde{V}$, one gets a map
\[\widetilde{\nabla}:\widetilde{V}\rightarrow \widetilde{V}\otimes\Omega^1_{\widehat{R}}(\log Z).\]
This map can be written in a more explicit way: since $\Omega^1_{\widehat{R}}(\log Z)$ is free over $\widehat{R}$ generated by $\{\mathrm{d}\log T_1,\cdots,\mathrm{d}\log T_r,\mathrm{d} T_{r+1},\cdots,\mathrm{d}T_d\}$, for any $v\in \mathrm{Fil}^iV$, the element $\nabla(v)$ can be uniquely written in form
\begin{equation}\label{equ_form}
 \nabla(v)=\sum_{j=1}^rv_j\otimes \mathrm{d}\log T_j + \sum_{j=r+1}^d v_j\otimes \mathrm{d}T_j \in V\otimes \Omega^1_{\widehat{R}}(\log Z).
\end{equation}
Since $\nabla$ satisfies Griffiths transversality, one gets that $v_j\in \mathrm{Fil}^{i-1}V$ for all $j=1,\cdots,d$. Then
\[\widetilde{\nabla}([v]_i) = \sum_{j=1}^r[v_j]_{i-1}\otimes \mathrm{d}\log T_j + \sum_{j=r+1}^d [v_j]_{i-1}\otimes \mathrm{d}T_j \in \widetilde{V}\otimes \Omega^1_{\widehat{R}}(\log Z).\]
This map $\widetilde{\nabla}$ is indeed a $p$-connection, since for any $f\in \widehat{R}$ and $v\in \mathrm{Fil}^iV$,
\[\widetilde{\nabla}(f[v]_i)=([\cdot]_{i-1}\otimes\mathrm{id})\circ\nabla(fv)=([\cdot]_{i-1}\otimes\mathrm{id})\Big(f\nabla(v)+v\otimes\mathrm{d}f\Big)=f\widetilde{\nabla}([v]_i)+p[v]_{i}\otimes\mathrm{d}f.\]
We have thus constructed a logarithmic $p$-connection $(\widetilde{V},\widetilde{\nabla})$.

\begin{remark}
The complex definition above primarily addresses cases involving $p$-primary torsion. When $V$ is $p$-torsion free, the situation simplifies: $\widetilde{V}$ becomes naturally isomorphic to $\sum_{i\in\mathbb{Z}} \frac{\mathrm{Fil}^iV}{p^i} \subset V\otimes_{\mathbb{Z}_p}\mathbb{Q}_p$ via the map $[v]_i \mapsto \frac{v}{p^i}$. Explicitly, this identifies $\widetilde{V}$ with the subspace $V' := \sum_i \frac{1}{p^i}\mathrm{Fil}^iV \subset V\otimes_{\mathbb{Z}_p}\mathbb{Q}_p$. While the connection $\nabla$ extends $\mathbb{Q}_p$-linearly to $V\otimes_{\mathbb{Z}_p}\mathbb{Q}_p$, it cannot restrict to a connection on $V'$; however, the \emph{$p$-connection} $p\nabla$ does restrict to a \emph{$p$-connection} on $V'$. Under this identification, we have $\widetilde{\nabla} = p\nabla$.
\end{remark}

We now construct the connection $\Phi_*(\widetilde{\nabla})$ on $\widetilde{V}\otimes_\Phi\widehat{R}$. For $v\in \mathrm{Fil}^iV$, let $\{v_j\}$ (defined in \eqref{equ_form}) be the coefficients in the expansion of $\nabla(v)$ in the coordinate basis. Define
\[\Phi_*(\widetilde{\nabla})([v]_i\otimes_\Phi1)
:=\sum_{j=1}^r([v_j]_{i-1}\otimes_\Phi1)\otimes \frac{\mathrm{d}\log \Phi(T_j)}{p}+\sum_{j=r+1}^d ([v_j]_{i-1}\otimes_\Phi1)\otimes \frac{\mathrm{d}\Phi(T_j)}{p},\]
which extends uniquely to a connection on $\widetilde{V}\otimes_\Phi\widehat{R}$ via the Leibniz rule. Direct verification shows that for any $f\in\widehat{R}$:
\begin{equation*}
\Phi_*(\widetilde{\nabla})([fv]_i\otimes_\Phi1) =
\Phi(f)\cdot\Phi_*(\widetilde{\nabla})([v]_i\otimes_\Phi1) +
([v]_i\otimes_\Phi1)\otimes \mathrm{d}\Phi(f).
\end{equation*}
The condition $\Phi(T_i)=w_iT_i^p$ ensures $\frac{\mathrm{d}\log \Phi(T_j)}{p},\frac{\mathrm{d}\Phi(T_j)}{p} \in \Omega_Y^1(\log Z)$ and making $\Phi_*(\widetilde{\nabla})$ a logarithmic connection along $T_1\cdots T_r=0$.

\textbf{Coordinate independence:}
\begin{itemize}
 \item For $p$-torsion free $V$, which is our main case of interest, identifying $\widetilde{V}$ with $\sum\limits_{i\in\mathbb{Z}} \frac{\mathrm{Fil}^iV}{p^i}\subset V\otimes \mathbb{Q}_p$ yields $\Phi_*(\widetilde{\nabla}) = \Phi_*(\nabla)$, where $\Phi_*(\nabla)$ is the push forward of a connection along a ring homomorphism, which is certainly coordinate-independent. For the equality, one checks that for any $v\in\mathrm{Fil}^iV$,
\begin{equation*}
\begin{split}
\Phi_*(\widetilde{\nabla}) ([v]_i\otimes_\Phi1) &= \sum_{j=1}^r ([v_j]_{i-1}\otimes_\Phi1) \otimes \frac{\mathrm{d}\log \Phi(T_j)}{p} + \sum_{j=r+1}^d ([v_j]_{i-1}\otimes_\Phi1)\otimes \frac{\mathrm{d}\Phi(T_j)}{p} \\ =& \sum_{j=1}^r(\frac{v_j}{p^{i-1}}\otimes_\Phi1)\otimes \frac{\mathrm{d}\log \Phi(T_j)}{p}+\sum_{j=r+1}^d (\frac{v_j}{p^{i-1}}\otimes_\Phi1)\otimes \frac{\mathrm{d}\Phi(T_j)}{p} \\
=& \frac1{p^i}\left(\sum_{j=1}^r(v_j\otimes_\Phi1)\otimes \mathrm{d}\log \Phi(T_j)+\sum_{j=r+1}^d (v_j\otimes_\Phi1)\otimes \mathrm{d}\Phi(T_j)\right) \\
=& \frac1{p^i}\Phi_*\left(\sum_{j=1}^r v_j\otimes \mathrm{d}\log T_j +\sum_{j=r+1}^d v_j \otimes \mathrm{d} T_j \right) \\
=& \frac1{p^i}\Phi_* (\nabla(v))=\Phi_* (\nabla([v]_i)) = \Phi_*(\nabla)([v]_{i}\otimes_\Phi1).\\
\end{split}
\end{equation*}
 \item For the general case, locally choose a short exact sequence of filtered de Rham modules
\begin{equation} \label{ses1}
0\rightarrow (\widehat{V}',\widehat{\nabla},\widehat{\mathrm{Fil}})\longrightarrow (\widehat{V}, \widehat{\nabla}, \widehat{\mathrm{Fil}}) \longrightarrow (V,\nabla,\mathrm{Fil}) \rightarrow 0
\end{equation}
 with $(\widehat{V}, \widehat{\nabla}, \widehat{\mathrm{Fil}})$ chosen to be $p$-torsion free. Taking the tilde functor and the operator $\Phi_*$ we defined above, one obtains a short exact sequence ofde Rham bundles
 \begin{equation} \label{ses2}
 0\rightarrow
\left(\widetilde{\widehat{V}'}\otimes_\Phi\widehat{R},\Phi_*(\widetilde{\widehat{\nabla}})\right)
\longrightarrow \left(\widetilde{\widehat{V}}\otimes_\Phi\widehat{R},\Phi_*(\widetilde{\widehat{\nabla}})\right) \longrightarrow \left(\widetilde{V}\otimes_\Phi\widehat{R},\Phi_*(\widetilde{\nabla})\right) \rightarrow 0.
\end{equation}
The exactness for the underlying modules follows, and that the morphisms preserve the connections can be checked directly by definition.

Since the first two terms in \eqref{ses1} are $p$-torsion free, the first two terms in\eqref{ses2} are coordinate independent. Hence the third term $\left(\widetilde{V}\otimes_\Phi\widehat{R},\Phi_*(\widetilde{\nabla})\right)$ is also coordinate independent.
\end{itemize}

One defines a logarithmic Fontaine-Faltings module over $(\mathcal{Y},\mathcal{Z})$ in the same way as in the non-logarithmic case, simply using differentials with logarithmic poles instead regular differentials. More explicitly, a \emph{logarithmic Fontaine-Faltings module} over the $p$-adic formal completion $(\mathcal{Y},\mathcal{Z})$ of $(Y,Z)$ with Hodge-Tate weights in $[a,b]$ is a quadruple $(V,\nabla,\mathrm{Fil},\varphi)$, where
\begin{itemize}
 \item[-] $(V,\nabla)$ is a finitely generated de Rham $\widehat{R}$-module\footnote{We note that here we do not require the underlying module being locally free a priori. But by the existence of Frobenius structure, if the underlying module is $p$-torsion-free, then it must be locally free. This follows from the fact that $(V/p^n,\mathrm{Fil},\varphi)$ is located in $\MF{R}$ for each $n\geq1$.} with logarithmic poles along $T_1\dots T_d=0$;
 \item[-] $\mathrm{Fil}$ is a Hodge filtration on $(V,\nabla)$ of level in $[a,b]$ as in the text after Setup \ref{setup:scheme_rigid1};
 \item[-] $\varphi$ is an $\widehat{R}$-linear isomorphism \[\varphi:\widetilde{V}\otimes_{\Phi}\widehat{R} \longrightarrow V,\]
 which is horizontal with respect to the connections $\Phi_*(\widetilde{\nabla})$ and $\nabla$, i.e., $\varphi$ is a morphism between two de Rham $\widehat{R}$-modules with logarithmic poles along $T_1\cdots T_r=0$.

\end{itemize}

 In particular, a logarithmic Fontaine-Faltings module whose underlying de Rham $\widehat{R}$-module $V$ is locally free may be considered as a filtered logarithmic $F$-crystal in finite, locally free modules. Note that our definition of a logarithmic Fontaine-Faltings module also includes the case when $V$ is $p$-primary torsion.
Denote by $\mathcal{MF}_{[a,b]}^{\nabla,\Phi}((\mathcal{Y},\mathcal{Z})/W)$ the category of logarithmic Fontaine-Faltings modules over $(\mathcal{Y},\mathcal{Z})$ with Hodge-Tate weights in $[a,b]$. For the rest of what follows, we assume that $b-a\leq p-2$.

\begin{conjecture}[Logarithmic analog of Faltings' gluing theorem and compatibility with Higgs-de Rham flows] \label{conj:log_gluing}
Notation as in Setup \ref{setup:log_FF_affine}. Assume $0\leq b-a\leq p-2$, and $p>2$. Then
\begin{enumerate}
\item For any two choices of $\Phi, \Psi$ of Frobenius lifts, satisfying the conditions specified in Setup \ref{setup:log_FF_affine}, there is an equivalence between the corresponding categories $$\mathcal{MF}_{[a,b]}^{\nabla,\Psi}((\mathcal{Y},\mathcal{Z})/W)\rightarrow \mathcal{MF}_{[a,b]}^{\nabla,\Phi}((\mathcal{Y},\mathcal{Z})/W).$$ These equivalences satisfy the obvious cocycle condition, given a third Frobenius lift. Therefore for any $(Y, Z)/W$ as in \autoref{setup:log_FF}, we can define the category $\mathcal{MF}_{[a,b]}^{\nabla}((\mathcal{Y},\mathcal{Z})/W)$ by gluing.
\item For any $(Y, Z)/W$ as in \autoref{setup:log_FF}, there is an equivalence of categories between $\mathcal{MF}_{[a,b]}^{\nabla}((\mathcal{Y},\mathcal{Z})/W)$ and the category of 1-periodic logarithmic Higgs-de Rham flows, as in \cite[Appendix]{LSYZ14}, extending the non-logarithmic equivalence of \cite{LSZ13a}.
\end{enumerate}
\end{conjecture}

\begin{remark}As explained in Appendix \ref{section FFM}, in the non-logarithmic setting the key to the analogous result is the Taylor formula comparing two different Frobenius lifts. Similarly, here the key will be a logarithmic Taylor formula. The computations are indeed formidable.
\end{remark}

\begin{remark}The equivalence between $\mathcal{MF}_{[a,b]}^{\nabla}((\mathcal{Y},\mathcal{Z})/W)$ and the category of 1-periodic logarithmic Higgs-de Rham flows should indeed be straightforward if $(Y,Z)$ satisfies \autoref{setup:log_FF_affine}.
Let $(V,\nabla,\mathrm{Fil},\varphi)$ be a logarithmic Fontaine-Faltings module. It can be represented as
\[\xymatrix{
(V,\nabla,\mathrm{Fil}) \ar[dr]^{\widetilde{(\cdot)}} && \Phi_*(\widetilde{V},\widetilde{\nabla}) \ar@/_10pt/[ll]_{\varphi} \\
&(\widetilde{V},\widetilde{\nabla}). \ar[ur]^{\Phi_*}
}\]
Then the associated $1$-periodic Higgs-de Rham flow is of the form
\[\xymatrix{
 & \Phi_*(\widetilde{V},\widetilde{\nabla}) \ar[dr]^{\mathrm{Gr}}\\
\mathrm{Gr}(V,\nabla,\mathrm{Fil}) \ar[ur]^{\mathcal{C}^{-1}} && \mathrm{Gr}(\Phi_*(\widetilde{V},\widetilde{\nabla})). \ar@/^10pt/[ll]_{\mathrm{Gr}(\varphi)}}
\]
\end{remark}

\subsection{The \texorpdfstring{$\mathbb{D}^{\log}$}{Dlog} functor}
In the following conjecture, we remind the reader that when we say ``$\mathbb{Z}_p$-local system'', we are referring to a continuous representation of the \'etale fundamental group of the rigid space $\mathcal{Y}^{\circ}_K$, i.e., the profinite group associated to the category of \emph{(connected) finite \'etale covers} of $\mathcal{Y}^{\circ}_K$.
\begin{conjecture}[Existence of Faltings $\mathbb{D}^{\log}$-functor]\label{conj:Faltings_Dlog}
Let $Y, Z, W$ be as in \autoref{setup:log_FF}, and assume \autoref{conj:log_gluing}. Fix $a\leq b \leq a+p-1$. Let $\mathcal{MF}_{[a,b]}^{\nabla}((\mathcal{Y},\mathcal{Z})/W)$ be the resulting category of logarithmic Fontaine-Faltings modules on $(\mathcal{Y}, \mathcal{Z})$. Then
\begin{enumerate}
\item there is a \emph{fully faithful} functor:

$$\mathbb{D}^{\log}\colon \mathcal{MF}_{[a,b]}^{\nabla}((\mathcal{Y},\mathcal{Z})/W)\rightarrow \mathrm{Loc}_{\mathbb{Z}_p}(\mathcal{Y}^{\circ}_K),$$
the essential image of which we call \emph{logarithmic crystalline representations}.
\item For another $Y',Z'/W$ as in \autoref{setup:log_FF}, together with a map of logarithmic pairs $(Y',Z')\rightarrow (Y,Z)$,
the functor $\mathbb{D}^{\log}$ is functorial, i.e., the diagram
 \begin{equation*}
 \xymatrix@C=2cm{
 \mathcal{MF}_{[a,b]}^{\nabla}((\mathcal{Y}',\mathcal{Z}')/W) \ar[d] \ar[r]^-{\mathbb{D}^{\log}} & \mathrm{Loc}_{\mathbb{Z}_p}(\mathcal{Y}'^{\circ}) \ar[d]\\
 \mathcal{MF}_{[a,b]}^{\nabla}((\mathcal{Y},\mathcal{Z})/W) \ar[r]^-{\mathbb{D}^{\log}} & \mathrm{Loc}_{\mathbb{Z}_p}(\mathcal{Y}^{\circ}) \\
 }
 \end{equation*}
 2-commutes.
\end{enumerate}
\end{conjecture}
\begin{remark}\label{rem:algebraic}Note that if $Y/W$ is projective, then $\pi_1^\et(\mathcal{Y}^\circ_K)\cong \pi_1^\et(Y^{\circ}_K)$ by \cite[Theorem 3.1]{Lut93}.
\end{remark}
\begin{remark} Suppose \autoref{conj:Faltings_Dlog} holds. Similar to the non-logarithmic case, for every $f\geq 1$, one also has an equivalence of categories between the category of logarithmic Fontaine-Faltings modules with $\mathbb{Z}_{p^f}$-endomorphism structure and the category of $f$-periodic logarithmic Higgs-de Rham flows on $(\mathcal{Y},\mathcal{Z})$. Hence there is a correspondence between logarithmic crystalline $\mathbb{Z}_{p^f}$ representations and $f$-periodic logarithmic Higgs-de Rham flows.
\end{remark}
\subsection{Compatibility with the work of \cite{DLLZ}}

Finally, we pose a basic conjecture in logarithmic $p$-adic Hodge theory. The conjecture roughly says the following: two natural filtered de Rham bundles associated to a logarithmic crystalline representation are isomorphic.

\begin{conjecture}\label{conjecture:compatibility_filtered_dR}Let $Y/W$ be a smooth proper scheme with geometrically connected generic fiber and let $Z\subset Y$ be a relative simple normal crossings divisor. Suppose \autoref{conj:Faltings_Dlog} holds. Let $\mathbb{L}$ be a logarithmic crystalline local system on $(Y,Z)$, with associated logarithmic Fontaine-Faltings module $(M,\nabla,\mathrm{Fil},\varphi)_{\mathcal{Y}_K}$. As $Y/W$ is proper, the triple $(M,\nabla,\mathrm{Fil})$ is indeed algebraic, and hence defined over the scheme $Y_K$. By \autoref{rem:de Rham crystalline} and the rigidity of de Rham local systems, the local system satisfies the condition in \cite[Theorem 1.1]{DLLZ}. Then there is an isomorphism of filtered logarithmic de Rham bundles on $Y_K$
 \[(M,\nabla,\mathrm{Fil})_{\mathcal{Y}}\mid_{Y_K}= D^{\rm alg}_{\rm dR}(\mathbb{L}\otimes \mathbb{Q}_p)^\vee,\]
where the latter is as in \cite[Theorem 1.1]{DLLZ}.
\end{conjecture}

\begin{remark}
Suppose one had a logarithmic version of the crystalline period sheaf $\mathcal{O}\mathbb{B}_{\rm cris,\log}$ satisfying a logarithmic version of the Tan-Tong theorem. This means that
\begin{enumerate}
 \item a local system $\mathbb{L}$ is logarithmic crystalline if and only if there exist a filtered logarithmic $F$-isocrystal $\mathcal{E}$ satisfying
 \begin{equation}\label{equ:conj ass1}
 \mathcal{E}\otimes \mathcal{O}\mathbb{B}_{\rm cris,\log} \simeq \mathbb{L}\otimes \mathcal{O}\mathbb{B}_{\rm cris,\log};
 \end{equation}

 \item there is a natural injective map $\mathcal{O}\mathbb{B}_{\mathrm{cris},\log}\hookrightarrow \mathcal{O}\mathbb{B}_{\mathrm{dR},\log}$, where the latter is the logarithmic de Rham period sheaf defined in \cite{DLLZ}.
\end{enumerate}
 Then we claim that the conjecture would hold. Indeed, by forgetting the Frobenius structure on both sides in \eqref{equ:conj ass1} and extending the coefficients to $\mathcal{O}\mathbb{B}_{\mathrm{dR},\log}$ one gets the required equation to ensure the $\mathbb{L}$ is logarithmic de Rham.
\end{remark}

\bigskip

\noindent\textbf{Data Availability} Data sharing not applicable to this article as no datasets were generated or analysed during the current study.

\bigskip

\noindent\textbf{\large Declarations}

\bigskip

\noindent\textbf{Conflict of interest} On behalf of all authors, the corresponding author states that there is no conflict of interest.

\bigskip

\newcommand{\etalchar}[1]{$^{#1}$}

\end{document}